\documentclass[11pt]{amsart}
\usepackage{a4wide,amssymb,color}
\usepackage{showlabels}
\usepackage[all]{xy}

\newcommand{\I}{\mathcal I}

\newcommand{\IE}{\mathbb E}
\newcommand{\E}{\mathcal E}
\newcommand{\C}{\mathcal C}
\newcommand{\M}{\mathcal M}
\newcommand{\A}{\mathcal A}
\newcommand{\F}{\mathcal F}
\newcommand{\U}{\mathcal U}

\newcommand{\K}{\mathcal K}
\newcommand{\Ra}{\Rightarrow}

\newcommand{\Adelta}{\mathsf \Delta}
\newcommand{\Asigma}{\mathsf \Sigma}
\newcommand{\Ddelta}{\widehat\Adelta}
\newcommand{\Dsigma}{\widehat\Asigma}

\newcommand{\Alambda}{\mathsf \Lambda}
\newcommand{\ww}{\omega^{{\uparrow}\w}}

\newcommand{\Homeo}{\mathcal{H}}

\newcommand{\w}{\omega}
\newcommand{\IR}{\mathbb R}

\newcommand{\IN}{\mathbb N}
\newcommand{\IZ}{\mathbb Z}

\newcommand{\add}{\mathrm{add}}
\newcommand{\non}{\mathrm{non}}
\newcommand{\cov}{\mathrm{cov}}
\newcommand{\cof}{\mathrm{cof}}

\newcommand{\e}{\varepsilon}

\newtheorem{theorem}{Theorem}[section]
\newtheorem{proposition}[theorem]{Proposition}
\newtheorem{lemma}[theorem]{Lemma}
\newtheorem{claim}[theorem]{Claim}
\newtheorem{problem}[theorem]{Problem}
\newtheorem{question}[theorem]{Question}
\newtheorem{corollary}[theorem]{Corollary}

\theoremstyle{definition}
\newtheorem{definition}[theorem]{Definition}
\newtheorem{example}[theorem]{Example}

\newtheorem{remark}[theorem]{Remark}

\title{Small uncountable cardinals in large-scale topology}
\author{Taras Banakh}
\dedicatory{Dedicated to the memory of Kenneth Kunen}
\address{T.Banakh: Ivan Franko National University of Lviv (Ukraine) and Jan Kochanowski University in Kielce (Poland)}
\email{t.o.banakh@gmail.com}
\subjclass{03E04, 03E17, 03E35, 06A07, 54A25, 54A35}
\keywords{Cardinal characteristic of the continuum, coarse structure, weight, indiscrete coarse space, inseparated coarse space, large coarse space,  partially ordered space}
%\dedicatory{Dedicated to the 70th birthdays of Dikran Dikranjan and Igor Protasov}

\begin{document}

\begin{abstract} In this paper we are interested in finding and evaluating cardinal characteristics of the continuum that appear in large-scale topology, usually as the smallest weights of  coarse structures that belong to certain classes (indiscrete, inseparable, large) of finitary or locally finite coarse structures on $\omega$. Besides well-known cardinals $\mathfrak b,\mathfrak d,\mathfrak c$ we shall encounter two new cardinals $\Adelta$ and $\Asigma$, defined as the smallest weight of a finitary coarse structure on $\omega$ which contains no discrete subspaces and no asymptotically separated sets, respectively. We prove that $\max\{\mathfrak b,\mathfrak s,\cov(\mathcal N)\}\le\Adelta\le\Asigma\le\non(\M)$, but we do not know if the cardinals $\Adelta,\Asigma,\non(\M)$ can be separated in suitable models of ZFC. We define also cardinal characteristics $\Ddelta$ and $\Dsigma$, which are dual to $\Adelta,\Asigma$, and prove the inequalities $\cov(\M)\le\Dsigma\le\Ddelta\le\min\{\mathfrak d,\mathfrak r,\non(\mathcal N)\}$.
\end{abstract}
\maketitle

\section{Introduction}

The aim of this paper is to detect cardinal characteristics of the continuum that appear in large-scale topology as the critical cardinalities of certain classes of coarse structures on $\omega$. Besides known cardinal characteristics ($\mathfrak b$, $\mathfrak d$, $\mathfrak c$) we shall encounter two new critical cardinalities  $\Adelta$ and  $\Asigma$, which play an important role in large-scale topology, but seem to be unnoticed in the classical theory of cardinal characteristics of the continuum \cite{BaJu}, \cite{Blass}, \cite{vD}, \cite{Vau}. The cardinal $\Adelta$ (resp. $\Asigma$) is defined as the smallest weight of a finitary coarse structure on $\w$ that contains no infinite discrete subspaces (resp. no infinite asymptotically separated sets). The cardinals $\Asigma$ and $\Adelta$ admit simple combinatorial characterizations. Namely,
$$
\begin{aligned}
\Asigma&=\min\{|H|:H\subseteq S_\w\;\wedge\; \forall A,B\in[\w]^\w\;\exists h\in H\;\;(h[A]\cap B\in[\w]^\w)\}\mbox{ and}\\
\Adelta&=\min\{|H|:H\subseteq S_\w\;\wedge\; \forall A\in[\w]^\w\;\exists h\in H\;\;(\{x\in A:x\ne h(x)\in A\}\in[\w]^\w)\},
\end{aligned}
$$where $S_\w$ denotes the permutation group of $\omega$, and $[\w]^\w$ the family of all infinite subsets of $\w$. In fact, the group $S_\w$ in the definitions of $\Adelta$ and $\Asigma$ can be replaced by the smaller set $I_\w\subset S_\w$ consisting of involutions of $\w$ that have at most one fixed point (see Theorem~\ref{t:alternative}). This equivalent description of the cardinals $\Adelta$ and $\Asigma$ suggests introducing their duals via formulas
$$
\begin{aligned}
\Ddelta&=\min\{|\A|:\A\subseteq[\w]^\w\;\wedge\;\forall h\in I_\w\;\;\exists A\in\A\;\;(h[A]\cap A\notin[\w]^\w)\}\mbox{ and}\\
\Dsigma&=\min\{|\A|:\A\subseteq[\w]^\w\;\wedge\;\forall h\in I_\w\;\;\exists A,B\in\A\;\;(h[A]\cap B\notin[\w]^\w)\},
\end{aligned}
$$

In Theorems~\ref{t:DS} and \ref{t:duals} we shall prove that 
$$
\begin{gathered}
\max\{\mathfrak b,\mathfrak s,\cov(\mathcal N)\}\le\Adelta\le\Asigma\le\non(\M),\\
\cov(\M)\le\Dsigma\le\Ddelta\le\min\{\mathfrak d,\mathfrak r,\non(\mathcal N)\}.
\end{gathered}
$$However, we do not know whether  the cardinals $\Adelta,\Asigma,\non(\M)$ (resp. $\cov(\M),\Dsigma,\Ddelta$) can be separated in suitable models of ZFC.
\smallskip

The cardinals $\Adelta,\Asigma,\Ddelta,\Dsigma$ fit into the following enriched version of the famous Cicho\'n diagram (in which an arrow $\kappa\to\lambda$ indicates that $\kappa\le\lambda$ in ZFC).
$$
\xymatrix{
\Asigma\ar[r]&\non(\M)\ar[rr]&&\cof(\M)\ar[r]&\cof(\mathcal N)\ar[ld]\\
&&\mathfrak d\ar[ru]\ar[r]&\mathfrak c&\non(\mathcal N)\ar[u]\\
\Adelta\ar[uu]&\mathfrak s\ar[l]\ar[ru]\ar[rrru]&&\mathfrak r\ar[u]&\Ddelta\ar[u]\ar[l]\ar[llu]\\
\cov(\mathcal N)\ar[u]\ar[rrru]&\w_1\ar[ld]\ar[u]\ar[r]&\mathfrak b\ar[ru]\ar[llu]\ar[luuu]\ar[uu]\\
\add(\mathcal N)\ar[u]\ar[r]&\add(\M)\ar[rr]\ar[ru]&&\cov(\mathcal M)\ar[r]\ar[luuu]&\Dsigma\ar[uu]\\
}
$$
\vskip4pt
Now we briefly describe the organization of the paper. In Section~\ref{s:AP} we recall the necessary information from large-scale topology (= the theory of coarse structures) and also prove some new results, for example, Theorem~\ref{t:hypercellular} that says  that each large cellular finitary coarse space is finite, thus answering a question of Protasov. In Section~\ref{s:ccc} we recall some information on cardinal characteristics of the continuum and also introduce and study new cardinal characteristics $\Adelta$ and $\Asigma$, mentioned above. The main result of this section is Theorem~\ref{t:DS} locating the cardinals $\Adelta$ and $\Asigma$ in the interval $\max\{\mathfrak b,\mathfrak s,\cov(\mathcal N)\}\le\Adelta\le\Asigma\le\non(\M)$. 

In Section~\ref{s:critical} we calculate the smallest weights of coarse structures that belong to certain classes of finitary or locally finite coarse structures on $\w$, and prove that those smallest weights are equal to suitable cardinal characteristics of the continuum, which were considered in Section~\ref{s:ccc}.
In particular, this concerns the smallest weight of an indiscrete (resp. inseparated) finitary coarse structure on $\w$, which is equal to $\Adelta$ (resp. $\Asigma$) and hence fall into a relatively narrow interval $[\max\{\mathfrak b,\mathfrak s,\cov(\mathcal N)\},\non(\M)]$. The main result of Section~\ref{s:critical} is Theorem~\ref{t:main} characterizing the cardinals $\mathfrak b,\mathfrak d,\Adelta,\Asigma$ as the smallest weights of coarse spaces in suitable classes (indiscrete, inseparated, large) of locally finite or finitary coarse structures on $\w$.

In Section~\ref{s:cc} we study the smallest weights of coarse structures that belong to various classes (indiscrete, inseparated, large) of locally finite or finitary {\em cellular} coarse structures on $\w$. The main results of this section are Theorem~\ref{t:cc} and Corollary~\ref{c:2c}. One of the cardinals appearing in this Theorem~\ref{t:cc} is $\Delta^\circ_\w$. It is defined as the smallest weight of an indiscrete cellular finitary coarse structure on $\w$. Since each maximal cellular finitary coarse structure on $\w$ is indiscrete, the cardinal $\Delta^\circ_\w$ is well-defined and belongs to the interval $[\Adelta,\mathfrak c]$. In Corollary~\ref{c:2c} we prove that under $\Adelta^\circ_\w=\mathfrak c$, there are $2^{\mathfrak c}$ inseparated cellular finitary coarse structures on $\w$. On the other hand, we do not know if an inseparated cellular finitary coarse structure on $\w$ exists in ZFC. Corollary~\ref{c:2c} yields a (consistent) negative answer to Question~6 of Protasov and Protasova \cite{PPva}. 

 In Section~\ref{s:hyper} we use the equality $\mathfrak b=\mathfrak c$ for constructing continuum many large finitary coarse structures on $\w$, which answers Question 4 of Protasov and Protasova \cite{PPva} in the negative (at least under the assumption $\mathfrak b=\mathfrak c$).
 
In Section~\ref{s:dual} we introduce the cardinals $\Ddelta$ and $\Dsigma$, which are dual to the cardinals $\Adelta$ and $\Asigma$ and prove that these dual cardinals are located in the interval $\cov(\M)\le\Dsigma\le\Ddelta\le\min\{\mathfrak d,\mathfrak r,\non(\mathcal N)\}$. Also we characterize the cardinal $\Ddelta$ is large-scale terms as the smallest cardinality of a subset $\A\subseteq[\w]^\w$ such that for any finitary entourage $E$ on $\w$ the family $\A$ contains an $\{E\}$-discrete subset $A\in\A$.

In Section~\ref{s:poset} we  evaluate some cardinal characteristics of the poset $\IE_\w^\bullet$ of nontrivial cellular finitary entourages on $\w$. In particular, we prove that ${\uparrow}\!{\downarrow}\!{\uparrow}(\IE_\w^\bullet)={\downarrow}\!{\uparrow}\!{\downarrow}(\IE_\w^\bullet)=1$,  ${\uparrow\!\uparrow}(\IE_\w^\bullet)={\downarrow\!\downarrow}(\IE_\w^\bullet)={\downarrow}(\IE_\w^\bullet)={\uparrow}(\IE_\w^\bullet)=\mathfrak c$,  $\cov(\M)\le{\downarrow}\!{\uparrow}(\IE_\w^\bullet)\le\Ddelta$ and $\Asigma\le{\uparrow}\!{\downarrow}(\IE_\w^\bullet)\le\non(\M)$.

\section{Large-scale preliminaries}\label{s:AP}

In this section we recall the necessary information related to large-scale topology, which is a part of mathematics studying properties of coarse spaces. Coarse spaces were introduced by John Roe \cite{Roe} as large-scale counterparts of uniform spaces. For basic information on the large-scale topology (called also Asymptology), see the monograph \cite{Roe} of Roe,  and \cite{PZ} of Protasov and Zarichnyi.

A coarse space is a pair $(X,\E)$ consisting of a set $X$ and a coarse structure $\E$ on $X$. A coarse structure is a family of entourages satisfying certain axioms (that will be written down later).

\subsection{Some set-theretic notations} By $\w$ and $\w_1$ we denote the smallest infinite and uncountable cardinals, respectively. Let $\IN=\w\setminus\{0\}$ be the set of positive integers. For a set $X$ by $|X|$ we denote its cardinality. For a cardinal $\kappa$, we denote by $\kappa^+$ the smallest cardinal, that is larger than $\kappa$.

For a set $X$ and cardinal $\kappa$, let $$[X]^{\kappa}:=\{A\subseteq X:|A|=\kappa\}\mbox{ \  and \ }[X]^{<\kappa}:=\{A\subseteq X:|A|<\kappa\}.$$
For a function $f:X\to Y$ between sets and a subset $A\subseteq X$ by $f[A]$ we denote the image $\{f(x):x\in A\}$ of the subset $A$ under the map $f$.

\subsection{Entourages} An {\em entourage} on a set $X$ is any subset $E\subseteq X\times X$ containing the diagonal $$\Delta_X:=\{(x,x):x\in X\}$$ of the square $X\times X$. For entourages $E,F$ on $X$, the sets
$$E^{-1}=\{(y,x):(x,y)\in E\}\mbox{ \ and \ }EF=\{(x,z):\exists y\in X\;\;(x,y)\in E\;\wedge\; (y,z)\in F\}$$are entourages. %An enturage $E$ on $X$ is {\em symmetric} if $E=E^{-1}$.

For any entourage $E$ on $X$, point $x\in E$, and set $A\subseteq X$, the set $$E(x)=\{y\in X:(x,y)\in E\}$$ is called the {\em $E$-ball} around $x$, and the set $$E[A]=\bigcup_{a\in A}E(a)$$ is called the {\em $E$-neighborhood} of $A$. 

An entourage $E$ on $X$ is called
\begin{itemize}
\item {\em trivial} if $F\setminus \Delta_X$ is finite;
\item {\em symmetric} if $E=E^{-1}$;
\item {\em locally finite} if for any $x\in E$ the set $E^\pm(x)=E(x)\cup E^{-1}(x)$ is finite;
\item {\em finitary} if the cardinal $\sup_{x\in X}|E^\pm(x)|$ is finite;
\item {\em cellular} if $E=E^{-1}=E\circ E$ (which means that $E$ is an equivalence relation on $X$).
\end{itemize}

%\subsection{Cardinal characteristics of a family of entourages} 

%Let $X$ be a set and $\E$ be a family of entourages on $X$. A subset $A\subseteq X$ is called {\em $\E$-large} if $E[A]=X$ for some $E\in\E$.

%Let $\mathfrak l(X)$ be the largest cardinal $\kappa\le |\E|$ such that for any subfamily $\E'\subset\E$ of cardinality $

%Two sets $A,B\subseteq X$ are called {\em $\E$-separated} if for any $E\in\E$ the intersection $E[A]\cap E[B]$ is finite.

%A subset $D\subseteq X$ is called {\em $\E$-discrete} if any disjoint subsets $A,B\subset D$ are $\E$-separated.

%\item {\em $\E$-discrete} if for any $E\in\E$ and any disjoint sets $B,C\subseteq A$ the intersection $ there exists a finite set $F\subset X$ such that $E(x)\cap E[y]=\emptyset$ for any distinct points $x,y\in A\setminus F$.
%\end{itemize}
%Two sets $A,B\subseteq X$ are called {\em $\E$-separated} if for any $E\in\E$ 

\subsection{Some canonical families of entourages}

Let $\IE[X]$ be the family of all entourages on a set $X$. For a cardinal $\kappa$, let $$\IE_\kappa[X]:=\{E\in\IE[X]:\sup_{x\in X}\max\{|E(x)|^+,|E^{-1}(x)|^+\}<2+\kappa\}.$$
Let $\IE^\circ[X]$ be the family of all cellular entourages on $X$ and $\IE^\circ_\kappa[X]:=\IE_\kappa[X]\cap\IE^\circ[X]$ for a cardinal $\kappa$. By $\IE^\bullet_\kappa[X]$ we denote the subfamily of $\IE^\circ_\kappa[X]$ consisting of nontrivial  cellular entourages.

Observe that $\IE_{\w_1}[X]$ (resp. $\IE_{\w_1}^\circ[X]$) is the family of locally finite (and cellular) entourages on $X$, and $\IE_\w[X]$ (resp. $\IE_\w^\circ[X]$) is the family of finitary (and cellular) entourages on $X$. For any finite cardinal $n$ the family $\IE_n[X]$ coincides with the family $$\{E\in\IE[X]:\sup_{x\in X}\max\{|E(x)|,|E^{-1}(x)|\}\le n\}.$$

The inclusion relations for the families $\IE_\kappa[X]$ and $\IE_\kappa^\circ[X]$ are shown in the following diagram. In this diagram, $\kappa$ is a finite cardinal with $\kappa\ge 2$; for two families $\A,\mathcal B$ the arrow $\A\to\mathcal B$ indicates that $\A\subseteq\mathcal B$.
\begin{equation}\label{diag:E[X]}
\xymatrix{
\IE_2[X]\ar[r]&\IE_\kappa[X]\ar[r]&\IE_\w[X]\ar[r]&\IE_{\w_1}[X]\\
\IE_2^\circ[X]\ar[r]\ar[u]&\IE_\kappa^\circ[X]\ar[r]\ar[u]&\IE_\w^\circ[X]\ar[r]\ar[u]&\IE_{\w_1}^\circ[X]\ar[u]\\
\IE_2^\bullet[X]\ar[r]\ar[u]&\IE_\kappa^\bullet[X]\ar[r]\ar[u]&\IE_\w^\bullet[X]\ar[r]\ar[u]&\IE_{\w_1}^\bullet[X]\ar[u]
}
\end{equation}
The families $\IE_\kappa[\w]$, $\IE_\kappa^\circ[\w]$, $\IE_\kappa^\bullet[\w]$ will be denoted by $\IE_\kappa$, $\IE_\kappa^\circ$, $\IE_\kappa^\bullet$, respectively. 

\begin{lemma}\label{l:decompose} Every finitary entourage $E$ on a set $X$ is a subset of the union $\bigcup\F$ of some finite family $\E\subseteq\IE^\circ_2[X]$.
\end{lemma}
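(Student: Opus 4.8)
The plan is to interpret the elements of $\IE_2^\circ[X]$ graph-theoretically and thereby reduce the statement to a bounded-degree edge-colouring problem. First I would observe that a member $F$ of $\IE_2^\circ[X]$ is an equivalence relation all of whose classes have at most two elements, so $F=\Delta_X\cup\{(x,y),(y,x):\{x,y\}\in M\}$ for some family $M$ of pairwise disjoint two-element subsets of $X$, i.e.\ a \emph{matching}. Hence covering $E$ by finitely many members of $\IE_2^\circ[X]$ is the same as covering the off-diagonal pairs of $E$ by finitely many matchings. Since every $F\in\IE_2^\circ[X]$ is symmetric, it costs nothing to enlarge $E$ to the symmetric entourage $E\cup E^{-1}$ and to work with the simple graph $G$ on the vertex set $X$ whose edges are the pairs $\{x,y\}$ with $x\ne y$ and $(x,y)\in E\cup E^{-1}$.

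Next I would exploit the finitariness of $E$: the number $N:=\sup_{x\in X}|E^\pm(x)|$ is finite, so every vertex of $G$ has degree at most $N-1$; in particular $G$ has uniformly bounded finite degree. The core of the argument is then to partition the edges of $G$ into finitely many matchings. Two edges of $G$ obstruct each other precisely when they share an endpoint, and an edge $\{x,y\}$ shares an endpoint with fewer than $2N$ other edges. I would therefore colour the edges greedily along a well-ordering of the edge set: when an edge is reached, fewer than $2N$ colours are forbidden by the already-coloured adjacent edges, so a colour from the fixed finite palette $\{1,\dots,2N\}$ remains available. This transfinite greedy procedure yields a partition $M_1,\dots,M_n$ (with $n\le 2N$) of the edges of $G$ into matchings; alternatively one may combine Vizing's theorem with the De Bruijn--Erd\H{o}s compactness theorem to colour $G$ with at most $N$ colours.

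Finally I would convert each matching $M_i$ back into the entourage $F_i:=\Delta_X\cup\{(x,y),(y,x):\{x,y\}\in M_i\}$, which is an equivalence relation with all classes of size at most $2$, hence $F_i\in\IE_2^\circ[X]$. Setting $\F:=\{F_1,\dots,F_n\}$, every off-diagonal pair of $E\cup E^{-1}$ corresponds to an edge of $G$ lying in some $M_i$, so $E\subseteq E\cup E^{-1}=\bigcup_{i=1}^n F_i=\bigcup\F$, as required. The step I expect to carry the real weight is the edge-colouring of an \emph{infinite} graph: the finitary (uniformly bounded degree) hypothesis is exactly what makes finitely many matchings suffice, since a vertex of unbounded degree (as in a star) would force arbitrarily many colours, and the passage from the classical finite-graph colouring to the infinite setting is what requires either the transfinite greedy bound or a compactness argument.
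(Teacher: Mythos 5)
Your proof is correct, and it takes a genuinely different route from the paper's. The paper does not edge-colour the graph on $X$ directly: it vertex-colours the auxiliary intersection graph whose vertices are the balls $E(x)$ and whose edges join intersecting balls (this graph has finite degree by finitariness, hence finite chromatic number), obtaining a finite cover $\C$ of $X$ such that $E(x)\cap E(y)=\emptyset$ for distinct $x,y$ in the same piece $C\in\C$; it then writes $E(x)=\{f(x):f\in\F\}$ for a finite family of selection functions $\F\subseteq X^X$ and covers $E$ by the entourages $E_{C,f}=\Delta_X\cup\bigcup_{x\in C}\{x,f(x)\}^2$, each of which lies in $\IE^\circ_2[X]$ because the pairs $\{x,f(x)\}$, $x\in C$, are pairwise disjoint. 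Your reduction --- symmetrize to $E\cup E^{-1}$, view its off-diagonal part as a simple graph of maximum degree at most $N-1$, and partition the edge set into at most $2N$ matchings by a transfinite greedy colouring (or into at most $N$ matchings via Vizing's theorem combined with De Bruijn--Erd\H{o}s compactness) --- reaches the conclusion in a single step, since a matching, symmetrized and united with $\Delta_X$, is exactly a member of $\IE^\circ_2[X]$. Both arguments ultimately rest on the same phenomenon, that a bounded-degree infinite graph can be coloured with finitely many colours, but you apply it as an \emph{edge} colouring of the graph on $X$ itself, while the paper applies it as a \emph{vertex} colouring of the ball graph and then still needs the second decomposition step via the selectors $\F$. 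What your version buys is directness and an explicit bound on $|\F|$ ($\le 2N$ greedily, $\le N$ with Vizing), against the paper's $|\C|\cdot|\F|$ pieces; your transfinite greedy step is sound because every edge meets fewer than $2N$ other edges, so at each stage of the recursion a colour from the fixed finite palette remains available, and your closing diagnosis is accurate: the uniform degree bound is precisely where finitariness (rather than mere local finiteness) is used, since balls of unbounded finite size would force unboundedly many colours and the lemma would fail.
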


\begin{proof} Consider the graph $\Gamma=(V_E,W_E)$ with set of vertices $V_E:=\{E(x):x\in \w\}$ and set of edges $W_E=\{\{B,C\}\in [V_E]^2:B\cap C\ne\emptyset\}$. Since the entourage $E$ is finitary, the graph $\Gamma$ has finite degree and hence finite  chromatic number, see \cite[\S5.2]{Diestel}. Consequently, the exist a finite cover $\C$ of $X$ such that for every $C\in\C$ and distinct points $x,y\in C$ we have $E(x)\cap E(y)=\emptyset$. Since $\sup_{x\in X}|E(x)|<\w$, we can choose a finite family of functions $\F\in X^X$ such that $E(x)=\{f(x):f\in\F\}$ for every $x\in X$. For every $C\in\C$ and $f\in\F$ consider the entourage 
$$E_{C,f}=\Delta_X\cup\bigcup_{x\in C}\{x,f(x)\}^2$$ 
and observe that it belongs to the family $\IE^\circ_2[X]$. It is clear that
$$E\subseteq\bigcup_{C\in\C}\bigcup_{f\in\F}E_{C,f}.$$
\end{proof}

\subsection{Balleans and coarse structures} A {\em ballean} is a pair $(X,\E)$ consisting of a set $X$ and a family $\E$ of entourages on $X$ such that $\bigcup\E=X\times X$ and for any entourages $E,F\in\E$ there exists an entourage $T\in\E$ such that $EF^{-1}\subseteq T$. In this case, the family $\E$ is called the {\em ball structure} on $X$.  A ball structure is called a {\em coarse structure} if for any entourage $E\in\E$ the family $\{E'\subseteq X\times X:\Delta\subseteq E'\subseteq E\}$ is contained in $\E$. 

For a coarse space $(X,\E)$, a subfamily $\mathcal B\subseteq \E$ is called a {\em base} of $\E$ if each $E\in\E$ is contained in some $B\in\mathcal B$. Each base of a coarse structure $\E$ is a ball structure, and each ball structure $\mathcal B$ is a base of the unique coarse structure
$${\downarrow}\mathcal B=\bigcup_{B\in\mathcal B}\{E\subseteq X\times X:\Delta_X\subseteq E\subseteq B\}.$$
For a coarse structure $\E$, its {\em weight} $w(\E)$ is the smallest cardinality of a base of the coarse structure $\E$. For a coarse space $(X,\E)$ its {\em weight} $w(X,\E)$ is defined as the weight of its coarse structure $\E$.

 By Theorem~2.1.1 in \cite{PZ}, a coarse space $(X,\E)$ has countable weight if and only if it is {\em metrizable} in the sense that the coarse structure $\E$ is generated by the base
$$\big\{\{(x,y)\in X\times X:d(x,y)\le n\}:n\in\w\big\}$$where $d$ is a suitable metric on $X$.

\subsection{Some operations on coarse spaces} 
For any coarse space $(X,\E)$ and a subset $A\subseteq X$, the family $$\mathcal E{\restriction}A=\{E\cap(A\times A):E\in\E\}$$ is a coarse structure on $A$. The coarse space $(A,\E{\restriction}A)$ is called a {\em subspace} of the coarse space $(X,\E)$. 
\smallskip

For coarse spaces $(X,\E_X)$ and $(Y,\E_Y)$, their product $X\times Y$ carries the coarse structure $\E$, generated by the base 
$$\Big\{\big\{\big((x,y),(x',y')\big):(x,x')\in E_X,\;(y,y')\in E_Y\big\}:E_X\in\E_X,\;E_Y\in\E_Y\Big\}.$$ 
The coarse space $(X\times Y,\E)$ is called the {\em product} of the coarse spaces $(X,\E_X)$ and $(Y,\E_Y)$.
\smallskip

Two coarse spaces $(X,\E_X)$ and $(Y,\E_Y)$ are called {\em asymorphic} if there exists a bijective map $f:X\to Y$  such that the coarse structure $$f(\E_X)=\big\{\{(f(x),f(y)):(x,y)\in E\}:E\in\E_X\big\}$$is equal to the coarse structure $\E_Y$.

%For a coarse space $(X,\E)$ a subset $A\subset X$ 

\subsection{Locally finite and finitary coarse structures} A coarse structure $\E$ of a set $X$ is called 
\begin{itemize}
\item {\em locally finite} if for any $E\in\E$ and $x\in E$ the ball $E(x)$ is finite;
\item {\em finitary} if for any $E\in\E$ the cardinal $\sup_{x\in X}|E(x)|$ is finite;
\item {\em cellular} if $\E$ has a base consisting of cellular entourages.
\end{itemize}
A coarse space is {\em locally finite} (resp. {\em finitary}, {\em cellular}) if so is its coarse structure.

\begin{example} For any infinite set $X$ the family $\IE_{\w_1}[X]$ (resp. $\IE_\w[X]$) is the largest locally finite (resp. finitary) coarse structure on $X$.
\end{example}

On each set $X$ there exists also the smallest coarse structure. It consists of all trivial entourages on $X$.
\smallskip

Any action of a group $G$ on a set $X$ induces a finitary coarse structure $\E_G$,  
generated by the base 
$$\mathcal B_G=\{\Delta_X\cup(F\times F):F\in[X]^{<\w}\}\cup \big\{\{(x,y)\in X\times X:y\in Fx\}:1_G\in F\in[G]^{<\w}\big\}.$$
Observe that the largest finitary coarse structure $\IE_\w[X]$ on $X$ coincides with the coarse structure $\E_{S_X}$ generated by the permutation group $S_X$ of $X$. 

The following fundamental result is due to Protasov \cite{Prot} (see also \cite{PP} and \cite{P19}).

\begin{theorem}[Protasov]\label{t:Protasov} Every finitary coarse structure $\E$ on a set $X$ is equal to the finitary coarse structure $\E_G$ induced by the action of some group $G$ of  permutations of $X$.
\end{theorem}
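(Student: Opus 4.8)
The plan is to realize $\E$ as the coarse structure $\E_G$ for a group $G$ of permutations generated by suitably chosen involutions of $X$. The starting point is to reduce to symmetric entourages: for every $E\in\E$ the set $E\cup E^{-1}\cup\Delta_X$ is again a symmetric finitary entourage lying in $\E$, because $\E$ is closed under inverses and finite unions (both follow from the ballean axiom $EF^{-1}\subseteq T$ applied together with the trivial entourage $\Delta_X\in\E$). Hence the symmetric entourages of $\E$ form a base, and it suffices to capture them by permutations.

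The next and decisive step is to decompose each symmetric $E\in\E$ into involution graphs \emph{lying inside} $E$. Applying Lemma~\ref{l:decompose} to $E$ gives $E\subseteq\bigcup_{C\in\C}\bigcup_{f\in\F}E_{C,f}$ with each $E_{C,f}=\Delta_X\cup\bigcup_{x\in C}\{x,f(x)\}^2\in\IE^\circ_2[X]$. Since $f(x)\in E(x)$ and $E$ is symmetric, each square $\{x,f(x)\}^2$ is contained in $E$, so in fact $E_{C,f}\subseteq E$ and therefore $E_{C,f}\in\E$. Moreover $E_{C,f}=\Delta_X\cup\{(x,\tau(x)):x\in X\}$ is exactly the graph of the involution $\tau=\tau_{C,f}\in S_X$ swapping $x$ with $f(x)$ for $x\in C$ and fixing the remaining points. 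I would collect all such involutions $\tau_{C,f}$ (over all symmetric $E\in\E$ and all choices produced by the lemma) into a set $H\subseteq S_X$ and set $G=\langle H\rangle$. By construction the graph of every $\tau\in H$ belongs to $\E$, and every symmetric $E\in\E$ is covered by finitely many such graphs.

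It then remains to verify $\E=\E_G$. For $\E\subseteq\E_G$, given $E\in\E$ I would enlarge it to a symmetric $E'\in\E$ and write $E'\subseteq\bigcup_{i=1}^n E_{C_i,f_i}$; the right-hand side is precisely the basic entourage $\{(x,y):y\in Fx\}$ for the finite set $F=\{1_G,\tau_{C_1,f_1},\dots,\tau_{C_n,f_n}\}\subseteq G$, so $E\subseteq E'\in\E_G$. For the reverse inclusion $\E_G\subseteq\E$ I would check the two kinds of basic entourages of $\E_G$: the trivial entourages $\Delta_X\cup(F\times F)$ with $F\in[X]^{<\w}$ belong to every coarse structure, in particular to $\E$; and a basic entourage $\{(x,y):y\in Fx\}$ with $1_G\in F\in[G]^{<\w}$ is the finite union $\bigcup_{g\in F}\big(\Delta_X\cup\{(x,g(x)):x\in X\}\big)$ of graphs of elements of $G$, so it suffices to show that the graph of each $g\in G$ lies in $\E$.

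The main obstacle is precisely this last point, namely the passage from the generators $H$ to the whole group $G=\langle H\rangle$. Writing $g=\tau_1\cdots\tau_k$ as a word in $H$ and observing that the graph of a product of permutations is the composition of their graphs, the claim that $\mathrm{graph}(g)\in\E$ reduces by induction on $k$ to the closure of $\E$ under composition of entourages, which is again a consequence of the ballean axiom $EF^{-1}\subseteq T$ combined with closure under inverses. Thus every basic entourage of $\E_G$ lies in $\E$, giving $\E_G\subseteq\E$ and hence $\E=\E_G$. The combinatorial heart of the proof is the decomposition of a symmetric finitary entourage into finitely many involution graphs contained in it (Lemma~\ref{l:decompose}), while the group-theoretic heart is the closure of $\E$ under composition, which alone guarantees that arbitrary words in the generating involutions stay trapped inside $\E$.
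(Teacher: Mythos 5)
The paper does not actually prove Theorem~\ref{t:Protasov}: it is quoted from Protasov's work \cite{Prot} (see also \cite{P19}), so there is no in-paper argument to compare against line by line. Your proposal is a correct, self-contained proof, and it is essentially the standard one (Protasov's original argument also proceeds by decomposing a finitary set-valued correspondence into finitely many permutation graphs); what your version buys is that it runs entirely on the paper's own Lemma~\ref{l:decompose} instead of an external citation. The points I checked all hold: (i) from the ballean axiom $EF^{-1}\subseteq T$ together with $\Delta_X\in\E$ and downward closure one indeed gets that $\E$ is closed under inverses, finite unions and (enlargements of) compositions, so symmetric entourages form a base and trivial entourages lie in every coarse structure; (ii) your one genuine strengthening of Lemma~\ref{l:decompose} is legitimate and is the crux of the argument --- the lemma as \emph{stated} only provides a finite cover of $E$ by members of $\IE^\circ_2[X]$, but inspecting its proof, the functions $f\in\F$ satisfy $f(x)\in E(x)$, so for symmetric $E$ each square $\{x,f(x)\}^2$ lies in $E$ and hence $E_{C,f}\subseteq E$, putting $E_{C,f}$ in $\E$ by downward closure; (iii) the pairs $\{x,f(x)\}$, $x\in C$, are pairwise disjoint because $x,f(x)\in E(x)$ and $E(x)\cap E(y)=\emptyset$ for distinct $x,y\in C$, so $\tau_{C,f}$ is a well-defined involution with $\Delta_X\cup\mathrm{graph}(\tau_{C,f})=E_{C,f}$; and (iv) the induction on word length, using closure of $\E$ under composition, correctly traps $\mathrm{graph}(g)$ in $\E$ for every $g\in\langle H\rangle$, giving $\E_G\subseteq\E$, while the finite covers give $\E\subseteq\E_G$. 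I see no gap.
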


%On each set $X$ there exists also the smallest coarse structure. It consists of all entourages $E$ on $X$ such that the complement $E\setminus\Delta_X$ is finite. 

\subsection{Bounded sets in coarse spaces} For a coarse space $(X,\E)$, a subset $B\subseteq X$ is called {\em bounded} (more precisely, {\em $\E$-bounded\/}) if $B\subseteq E(x)$ for some entourage $E\in\E$ and some $x\in X$. The family of all bounded subsets in a coarse space $(X,\E)$ is called the {\em bornology} of the coarse space. A coarse space $(X,\E)$ is locally finite if and only if its bornology  coincides with the family $[X]^{<\w}$ of all finite subsets of $X$.

\subsection{Indiscrete, inseparated and large coarse spaces} 
Let $(X,\E)$ be a coarse space. A subset $A\subseteq X$ is called
\begin{itemize}
\item {\em $\E$-bounded} if $A\subseteq E(x)$ for some $E\in\E$ and $x\in X$;
\item {\em $\E$-unbounded} if $A$ is not $\E$-bounded;
\item {\em $\E$-discrete} if for any entourage $E\in\E$ the set $\{x\in A:A\cap E(x)\ne\{x\}\}$ is $\E$-bounded;
\item {\em $\E$-large} if there exists an entourage $E\in\E$ such that $E[A]=X$.
\end{itemize}
Two sets $A,B\subseteq X$ are called {\em $\E$-separated} if for any entourage $E\in\E$ the intersection $E[A]\cap E[B]$ is $\E$-bounded. A coarse space $(X,\E)$ is called {\em unbounded} if the set $X$ is $\E$-unbounded.

\begin{definition}
An unbounded coarse space $(X,\E)$ is called
\begin{itemize}
\item {\em discrete} if the set $X$ is $\E$-discrete;
\item {\em indiscrete} if  each $\E$-discrete subset of $X$ is $\E$-bounded;
\item {\em inseparated} if $X$ contain no $\E$-separated $\E$-unbounded sets $A,B\subseteq X$;
\item {\em large} if any $\E$-unbounded set in $X$ is $\E$-large.
\end{itemize}
\end{definition}

\begin{remark}
For the first time large balleans appeared in \cite{OProt}; large coarse spaces are called extremally normal in \cite{P19AGT} and $\delta$-tight in \cite{P-Dynamics}. Inseparated coarse spaces are called $\lambda$-tight in \cite{P-Dynamics} and ultranormal in \cite{BPn}.
\end{remark}

For any infinite locally finite coarse structure we have the implications
$$\mbox{large $\Ra$ inseparated $\Ra$ indiscrete $\Ra$ non-metrizable}.$$
The last implication is proved in the following proposition.

\begin{proposition}\label{p:uw} Each indiscrete coarse space has uncountable weight.
\end{proposition}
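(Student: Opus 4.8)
The plan is to argue by contradiction: assuming that an indiscrete coarse space $(X,\E)$ has countable weight, I would manufacture an $\E$-discrete $\E$-unbounded subset of $X$, directly contradicting the definition of indiscreteness. By the metrizability criterion quoted above (Theorem~2.1.1 of \cite{PZ}), countable weight means that $\E$ is generated by the base $\{E_n:n\in\w\}$ with $E_n=\{(x,y)\in X\times X:d(x,y)\le n\}$ for a suitable metric $d$ on $X$. Since an indiscrete space is by definition unbounded, $X$ is contained in no ball $E(x)$; in metric terms, no $d$-ball of finite radius about a point equals $X$, so from any basepoint there are points of arbitrarily large $d$-distance.

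Next I would carry out a recursive construction of a rapidly spreading sequence. Fix $x_0\in X$, and having chosen $x_0,\dots,x_k$, use unboundedness to select $x_{k+1}$ with $d(x_0,x_{k+1})>d(x_0,x_k)+k$; such a point exists because $X$ is not contained in the $d$-ball of radius $d(x_0,x_k)+k$ about $x_0$. Set $A=\{x_k:k\in\w\}$. The increments $g_k:=d(x_0,x_k)-d(x_0,x_{k-1})$ then satisfy $g_k>k-1\to\infty$, and since the distances $d(x_0,x_k)$ are strictly increasing, the triangle inequality yields $d(x_j,x_k)\ge g_{\max\{j,k\}}$ for all $j\ne k$.

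Finally I would verify the two required properties. The set $A$ is $\E$-unbounded because $d(x,x_k)\ge d(x_0,x_k)-d(x_0,x)\to\infty$ shows $A\not\subseteq E_m(x)$ for every $x\in X$ and $m\in\w$. To see that $A$ is $\E$-discrete it suffices, as $\{E_n\}$ is a base of $\E$, to bound the set $\{x_k\in A:A\cap E_n(x_k)\neq\{x_k\}\}$ for each fixed scale $n$; but membership of $x_k$ in this set forces $d(x_k,x_j)\le n$ for some $j\ne k$, hence $g_{\max\{j,k\}}\le n$, and since $g_m>m-1$ this can happen only for finitely many indices. Thus the set is finite, hence $\E$-bounded (a finite set is always bounded, being covered by a single directed entourage). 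This contradicts indiscreteness and shows $w(\E)>\w$.

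The hard part will be organizing the recursion so that $\E$-discreteness at \emph{every} scale $n$ follows from a single growth condition: by controlling only the distances $d(x_0,x_k)$ to the basepoint and forcing the increments $g_k$ to diverge, all pairwise distances become cofinally large, which reduces the discreteness check at each scale to the finiteness statement above. A secondary point to handle carefully is the passage from arbitrary entourages $E\in\E$ to the basic $E_n$ and from ``$\E$-bounded'' to ``finite'', which is exactly where metrizability is invoked.
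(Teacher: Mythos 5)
Your proof is correct and in essence takes the same route as the paper: both refute countable weight by recursively constructing a sequence that is spread out at every scale, hence $\E$-discrete yet $\E$-unbounded, contradicting indiscreteness. The only cosmetic difference is that you pass through the metrization theorem and enforce a single growth condition on the distances $d(x_0,x_k)$ with diverging increments, whereas the paper works directly with a countable base normalized so that $E_n=E_n^{-1}$ and $E_n\circ E_n\subseteq E_{n+1}$ and picks $x_n\notin\bigcup_{k<n}E_n(x_k)$ --- your metric argument is just this normalization packaged into $d$.
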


\begin{proof} Assume that that an unbounded coarse space $(X,\E)$ has countable weight and find a countable base $\{E_n\}_{n\in\w}$ of its coarse structure such that $E_n=E_n^{-1}$ and $E_n\circ E_n\subseteq E_{n+1}$ for every $n\in\w$.

Construct inductively a sequence of points $(x_n)_{n\in\w}$ in $X$ such that $x_n\notin\bigcup_{k<n}E_n(x_k)$ for every $n\in\w$. We claim that the subset $D=\{x_n\}_{n\in\w}$ of $(X,\E)$ is $\E$-discrete and unbounded. 

Indeed, given any entourage $E\in\E$, we can find $n\in\w$ such that $E\subseteq E_n=E_n^{-1}$. We claim that $D\cap E(x_k)=\{x_k\}$ for any $k>n$. Assuming that $D\cap E(x_k)$ contains some point $x_m\ne x_k$, we consider two possibilities.

If $m>k$, then $x_m\in E(x_k)\subseteq E_n(x_k)\subseteq E_m(x_k)$, which contradicts the choice of $x_m$. If $m<k$, then $x_k\in E^{-1}(x_m)\subseteq E_n(x_m)\subseteq E_k(x_m)$, which contradicts the choice of $x_k$. This contradiction shows that the set $D$  is $\E$-discrete.

Assuming that $D$ is bounded, we can find an entourage $E\in\E$ such that $D\subseteq E(x_0)$. Find $n\in\w$ such that $E\subseteq E_n$ and conclude that $x_n\in D\subseteq E(x_0)\subseteq E_n(x_0)$, which contradicts the choice of $x_n$.

Now we see that the coarse space $(X,\E)$ contains the $\E$-unbounded $\E$-discrete subset $D$ and hence $(X,\E)$ fails to be indiscrete. 
\end{proof}

\begin{proposition}\label{p2.10} If a locally finite coarse space $(X,\E)$ is indiscrete, then for any number $n\in\IN$ and any infinite set $I\subseteq X$, there exists an entourage $E\in\E$ such that the set $\{x\in I:|E(x)|\ge n\}$ is infinite.
\end{proposition}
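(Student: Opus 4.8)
The plan is to argue by induction on $n$, deriving a contradiction with indiscreteness whenever the conclusion fails. First I record the key translation: since $(X,\E)$ is locally finite, its bornology is exactly $[X]^{<\w}$, so ``$\E$-bounded'' means ``finite''; consequently indiscreteness of $(X,\E)$ says precisely that every infinite subset of $X$ fails to be $\E$-discrete. Thus, to reach a contradiction it suffices to produce an infinite $\E$-discrete set. For $n=1$ the claim is trivial, since $x\in E(x)$ gives $\{x\in I:|E(x)|\ge 1\}=I$ for every $E\in\E$.

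For the inductive step, suppose the claim holds for $n$ and fix an infinite $I\subseteq X$. By the inductive hypothesis there is an entourage $E_0\in\E$, which I may take symmetric (replacing it by $E_0\cup E_0^{-1}$ only enlarges the balls), such that $J:=\{x\in I:|E_0(x)|\ge n\}$ is infinite. Assume toward a contradiction that for every $E\in\E$ the set $\{x\in I:|E(x)|\ge n+1\}$ is finite. The heart of the argument is the following stabilization phenomenon: for every symmetric $E\in\E$ with $E\supseteq E_0$ one has $E(x)=E_0(x)$ for all but finitely many $x\in J$. Indeed, $E(x)\supseteq E_0(x)$ forces $|E(x)|\ge n$ on $J$, while by assumption $|E(x)|\ge n+1$ only finitely often in $I\supseteq J$; hence cofinitely many $x\in J$ satisfy $|E(x)|=n=|E_0(x)|$, and equality of these finite cardinalities together with $E_0(x)\subseteq E(x)$ yields $E(x)=E_0(x)$.

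Next I would extract from $J$ an infinite subset $D$ whose $E_0$-balls are pairwise disjoint. This uses local finiteness: by symmetry $y\in E_0(x)\Leftrightarrow x\in E_0(y)$, and $E_0(y)$ is finite, so each point lies in only finitely many balls $E_0(x)$; moreover each $E_0(x)$ is itself finite. A greedy selection therefore works: having chosen $x_1,\dots,x_k$, the set $\bigcup_{i\le k}E_0(x_i)$ is finite, so only finitely many $x\in J$ have $E_0(x)$ meeting it, and a fresh point $x_{k+1}\in J$ with $E_0(x_{k+1})$ disjoint from all previous balls can always be picked. This produces an infinite $D=\{x_k\}_{k\in\w}\subseteq J$ with $\{E_0(x):x\in D\}$ pairwise disjoint.

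Finally I claim $D$ is $\E$-discrete, which is the desired contradiction. Given any $E\in\E$, enlarge it to a symmetric $E'\in\E$ with $E'\supseteq E\cup E_0$ (possible since $\E$ is closed under inverses and finite unions). By the stabilization phenomenon $E'(x)=E_0(x)$ for cofinitely many $x\in D$, and for such $x$ the disjointness of the $E_0$-balls gives $D\cap E'(x)=D\cap E_0(x)=\{x\}$; since $E\subseteq E'$, a fortiori $D\cap E(x)=\{x\}$. Hence $\{x\in D:D\cap E(x)\ne\{x\}\}$ is finite, i.e. $\E$-bounded, so $D$ is an infinite $\E$-discrete set, contradicting indiscreteness and completing the induction. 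I expect the main obstacle to be the stabilization step: recognizing that failure of the conclusion at level $n+1$ pins the $E_0$-balls down to their minimal size on cofinitely many points of $J$ is exactly what converts mere disjointness of the $E_0$-balls into full $\E$-discreteness of $D$.
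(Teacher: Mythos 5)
Your proof is correct, and it takes a genuinely different route from the paper's at the decisive step. Both arguments share the scaffolding: an induction on $n$ (the paper phrases it as a minimal counterexample $n$, which is the same thing) and the extraction, via local finiteness of a symmetric $E_0$, of an infinite subset of $J$ whose $E_0$-balls are pairwise disjoint. From there the paper runs in the opposite direction: it applies indiscreteness to the disjointified set $I''$ to obtain an entourage $D$ such that $I''\cap D(x)\ne\{x\}$ for infinitely many $x\in I''$, and then the composed entourage $ED$ has balls containing two disjoint balls of cardinality $\ge n-1$, whence $|ED(x)|\ge 2n-2\ge n$ infinitely often on $I$, contradicting the minimality of $n$. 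You instead exploit the assumed failure at level $n+1$ to prove your stabilization lemma --- every $E'\in\E$ with $E'\supseteq E_0$ satisfies $E'(x)=E_0(x)$ for all but finitely many $x\in J$, since $n\le|E_0(x)|\le|E'(x)|\le n$ pins both finite balls to exactly $n$ points --- and this converts mere disjointness of the $E_0$-balls on $D$ into full $\E$-discreteness of $D$, contradicting indiscreteness directly. What each approach buys: the paper's composition trick is shorter and needs no control over balls of arbitrary enlargements of $E_0$; your argument avoids composing entourages altogether (you only need upper bounds for unions and symmetrizations, which the ballean axioms supply, e.g.\ $E\cup E_0\subseteq EE_0\subseteq T$ for some $T\in\E$ together with downward closedness), and it makes the witness of non-indiscreteness explicit --- an actual infinite $\E$-discrete set --- rather than routing the contradiction through the minimality of $n$. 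One half-line in your write-up deserves the care you in fact give it: a priori only $|E_0(x)|\ge n$ holds on $J$, so the equality $E(x)=E_0(x)$ really does require the full chain $n\le|E_0(x)|\le|E(x)|\le n$, not just the bound $|E(x)|\le n$.
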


\begin{proof} To derive a contradiction, assume that there exists a number $n\in\IN$ and an infinite set $I\subseteq X$ such that for any entourage $E\in\E$ the set $\{x\in I:|E(x)|\ge n\}$ is finite. We can assume that $n$ is the smallest possible number with this property, which means that for any infinite set $J\subseteq X$ there exists an entourage $E\in\E$ such that the set $\{x\in J:|E(x)|\ge n-1\}$ is infinite. 
It is clear that $n\ge 2$.

By the minimality of $n$, for the set $I$ there exists an entourage $E\in\E$ such that the set $I'=\{x\in I:|E(x)|\ge n-1\}$ is infinite. Let $I''$ be a maximal subset of $I'$ such that $E(x)\cap E(y)=\emptyset$ for any distinct points $x,y\in I''$. Such a maximal set $I'$ exists by the Kuratowski-Zorn Lemma and is infinite by the local finiteness of the entourage $E$.  Since the coarse space $(X,\E)$ is indiscrete, the set $I''$ is not $\E$-discrete. Consequently, there exists an entourage $D\in\E$ such that the set $J=\{x\in I'':I''\cap D(x)\ne\{x\}\}$ is infinite. Now consider the entourage $ED\in\E$ and observe  that for any $x\in J$ we can find a point $y_x\in I''\cap D(x)\setminus\{x\}$ and conclude that $ED(x)$ contains the disjoint balls $E(x)$ and $E(y_x)$, each of cardinality $\ge n-1$. Then $|ED(x)|\ge 2n-2\ge n$ for any $x\in J$, which contradicts the choice of $n$.
\end{proof}

Large coarse spaces admit the following simple characterization.

\begin{proposition}\label{p:hyper} A coarse space $(X,\E)$ is large if and only if for any disjoint unbounded sets $A,B\subseteq X$ there exists an entourage $E\in\E$ such that $A\subseteq E[B]$ and $B\subseteq E[A]$.
\end{proposition}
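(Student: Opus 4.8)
The plan is to prove the two implications separately, after recording some elementary closure properties of $\E$ that follow from the ballean axioms. Applying the axiom to the pair $(E,E)$ gives $T\in\E$ with $E^{-1}\subseteq EE^{-1}\subseteq T$ (using $\Delta_X\subseteq E$), so $\E$ is closed under inverses. Consequently $\E$ is closed under finite unions: applying the axiom to $(E,F^{-1})$ yields $T\in\E$ with $E\cup F\subseteq E(F^{-1})^{-1}=EF\subseteq T$, whence $E\cup F\in\E$ by closure under sub-entourages containing the diagonal. Finally, for a symmetric $G\in\E$ the axiom applied to $(G,G)$ gives $G\circ G=GG^{-1}\subseteq T\in\E$, so $G\circ G\in\E$. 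I will use these facts freely.

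For the ``only if'' direction, suppose $(X,\E)$ is large and let $A,B\subseteq X$ be disjoint unbounded sets. Since the space is large, each of the unbounded sets $A,B$ is $\E$-large, so there are $E_A,E_B\in\E$ with $E_A[A]=X=E_B[B]$. Then $E:=E_A\cup E_B\in\E$, and $B\subseteq X=E_A[A]\subseteq E[A]$ together with $A\subseteq X=E_B[B]\subseteq E[B]$ gives exactly what is required. This direction is routine.

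For the ``if'' direction, assume the stated condition and fix an unbounded set $A$; I must produce $E\in\E$ with $E[A]=X$. The natural candidate partner is $B:=X\setminus A$, which is disjoint from $A$, and I split according to whether $B$ is unbounded. If $B$ is unbounded, the hypothesis provides $E\in\E$ with $A\subseteq E[B]$ and $B\subseteq E[A]$; combining $B\subseteq E[A]$ with the trivial inclusion $A\subseteq E[A]$ (from $\Delta_X\subseteq E$) yields $X=A\cup B\subseteq E[A]$, so $A$ is $\E$-large.

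The remaining case, where $B=X\setminus A$ is bounded, is where I expect the real care to be needed, since the hypothesis cannot be invoked (it requires both sets unbounded); here I claim that any co-bounded set is large outright. Choose a symmetric $F\in\E$ and $z\in X$ with $X\setminus A\subseteq F(z)$, and pick $a_0\in A$ (possible because the unboundedness of $X$ forces $A\neq\emptyset$). The set $\{(z,a_0),(a_0,z)\}\cup\Delta_X$ is a trivial entourage and hence lies in $\E$, so $G:=F\cup(\{(z,a_0),(a_0,z)\}\cup\Delta_X)\in\E$ is symmetric and $G\circ G\in\E$. For each $b\in X\setminus A$ we have $(b,z)\in F\subseteq G$ and $(z,a_0)\in G$, so $(b,a_0)\in G\circ G$ and thus $b\in (G\circ G)(a_0)\subseteq(G\circ G)[A]$; combined with $A\subseteq(G\circ G)[A]$ this gives $(G\circ G)[A]=X$. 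In both cases $A$ is $\E$-large, so $(X,\E)$ is large. The only delicate bookkeeping is verifying that the finite ``bridge'' $\{(z,a_0),(a_0,z)\}$ is a trivial entourage belonging to every coarse structure, so that $G$ and $G\circ G$ really are members of $\E$.
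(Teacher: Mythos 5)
Your proof is correct and follows essentially the same route as the paper: the ``only if'' direction via closure of $\E$ under finite unions, and the ``if'' direction by splitting on whether $X\setminus A$ is unbounded, handling the bounded case with a trivial ``bridge'' entourage joining a point of $A$ to the center of a ball covering $X\setminus A$. The paper compresses the bounded case into the single assertion that $X\setminus A\subseteq E(a)$ for some $E\in\E$ and any $a\in A$; your explicit verification of the closure properties (inverses, unions, $G\circ G$) merely spells out what the paper leaves implicit.
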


\begin{proof} The ``only if'' part is trivial. To prove the ``if'' part, assume that  for any disjoint unbounded sets $A,B\subseteq X$ there exists an entourage $E\in\E$ such that $A\subseteq E[B]$ and $B\subseteq E[A]$. 

Given any unbounded set $A$ in $(X,\E)$, we should prove that $A$ is $\E$-large. If $X\setminus A$ is also unbounded, then our assumption yields an entourage $E\in\E$ such that $X\setminus A\subseteq E[A]$ and hence $X=E[A]$, which means that $A$ is $\E$-large.

 If $X\setminus A$ is bounded, then for any point $a\in A$ we can find an entourage $E\in\E$ such that $X\setminus A\subseteq E(a)\subseteq E[A]$ and again $X=E[A]$, so $A$ is $\E$-large.
\end{proof}

The following theorem answers one question of Protasov (asked in an e-mail correspondence).

\begin{theorem}\label{t:hypercellular} A large finitary coarse space $(X,\E)$ cannot be cellular.
\end{theorem}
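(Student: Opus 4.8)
The plan is to argue by contradiction: assume $(X,\E)$ is large, finitary and cellular, hence (being large) unbounded, and derive a contradiction. First I would record two reductions. Since $\E$ is finitary it is locally finite, so its bornology is exactly $[X]^{<\w}$; thus a subset is unbounded iff it is infinite. If $X$ were uncountable, pick a countably infinite $A\subseteq X$; by largeness $A$ is $\E$-large, so $E[A]=X$ for some $E\in\E$, whence $|X|=|E[A]|\le|A|\cdot\sup_{x}|E(x)|\le\aleph_0$, a contradiction. Hence $X$ is countable and we may assume $X=\w$.

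Next I would convert largeness into a combinatorial statement about equivalence relations. By Theorem~\ref{t:Protasov} we may write $\E=\E_G$ for a group $G$ of permutations of $X$; cellularity lets us arrange that $\E$ has a base consisting of orbit equivalence relations $E_H$ of finite subgroups $H\le G$, each $E_H$ being a partition of $\w$ into finite classes (the $H$-orbits) of uniformly bounded size (alternatively, combining Lemma~\ref{l:decompose} with cellularity yields directly a cofinal family $\mathcal C\subseteq\E$ of equivalence relations with finite classes). For a cellular $E$ one has $E[A]=\w$ precisely when the infinite set $A$ meets every $E$-class, so largeness becomes the assertion that every infinite $A\subseteq\w$ meets all classes of some $E\in\mathcal C$.

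To contradict this I would construct an infinite, co-infinite $A\subseteq\w$ that is not $\E$-large, i.e.\ whose complement $B=\w\setminus A$ contains a full $E$-class for every $E\in\mathcal C$. Two features make this plausible. First, the property is monotone along the coarsening order: if $E\subseteq E'$ then every $E'$-class is a union of $E$-classes, so a full $E'$-class inside $B$ already contains a full $E$-class; hence it suffices to trap a full class of each member of a cofinal subfamily. Secondly, whenever $\mathcal C$ contains entourages whose classes are singletons outside a finite set (the case of finitely supported generators), the construction is immediate, since any infinite $B$ then contains such a singleton class for each $E$.

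The main obstacle is the opposite, ``thick'' case, in which cofinally many entourages have all classes of size $\ge 2$ (equivalently, the generators have infinite support), compounded by the fact that, since a large space is indiscrete and hence has uncountable weight by Proposition~\ref{p:uw}, the family $\mathcal C$ has no countable cofinal chain. One then cannot simply enumerate a cofinal family and trap one class at a time while keeping $B$ co-infinite. I expect this to be the crux, and would overcome it by exploiting the uniform finiteness of class sizes within each entourage together with the directedness of $\mathcal C$: either by a fusion construction of $A$ defeating a carefully chosen cofinal tree of partitions, or---perhaps more cleanly---by applying the characterization of largeness in Proposition~\ref{p:hyper} to a suitable pair of disjoint infinite sets, forcing a single cellular entourage to match them class by class, which, iterated against the finitary bound on class sizes, yields the desired contradiction.
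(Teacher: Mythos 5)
Your preliminary reductions are sound: in a finitary (hence locally finite) coarse space bounded $=$ finite, the counting argument forcing $X$ to be countable is correct, and the reformulation of largeness for cellular entourages ($E[A]=\w$ iff $A$ meets every $E$-class, with monotonicity along refinement) is exactly right. But the proof stops where the theorem actually begins. Your plan is to build an infinite co-infinite $A$ whose complement swallows a full class of every member of a cofinal family $\mathcal C$ of finite-classed equivalence relations. You correctly observe that $\mathcal C$ admits no countable cofinal chain (in fact, by Lemma~\ref{l:L=c} a large finitary structure has weight $\mathfrak c$), so the naive enumerate-and-trap diagonalization is unavailable --- and at that point you offer only two unexecuted gestures: a ``fusion construction'' against a ``cofinal tree of partitions'' that is never specified, and an appeal to Proposition~\ref{p:hyper} ``forcing a single cellular entourage to match them class by class, which, iterated\dots yields the desired contradiction,'' with no indication of what is iterated, what quantity grows or shrinks, or why the finitary bound is violated. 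Neither sketch contains a mechanism for defeating an uncountable directed family: transfinite recursion of length $\mathfrak c$ must keep $B$ co-infinite and consistent across $\mathfrak c$-many trapped classes, and the uniform bound on class sizes of each \emph{single} entourage gives no control across the family. This is a genuine gap, and it sits precisely at the heart of the theorem.

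It is worth noting that the paper's proof avoids your construction problem entirely: it never exhibits a non-large set, but instead derives a quantitative contradiction from largeness itself. Using indiscreteness (via Proposition~\ref{p2.10}, which your proposal never invokes), one builds a decreasing sequence of infinite sets $I_n$ and entourages $E_n$ with pairwise disjoint balls $E_n(x)$, $x\in I_n$, of size $\ge n$; a pseudo-intersection $I$ of the $I_n$ is $\E$-large, giving $L\in\E$ with $L[I]=X$ and hence a retraction $\varphi:X\to I$ whose fibers have size $<l:=1+\sup_x|L(x)|$. Cellularity enters only once, to find a single cellular finitary $F\supseteq E_l\cup L$; then the iteration $S_{n+1}=\varphi\big[E_l[S_n]\big]$, started at a suitable singleton $S_0$, stays inside the one ball $F[S_0]$ while growing by the factor $\frac{l}{l-1}$ at each step (disjoint balls of size $\ge l$ versus fibers of size $\le l-1$), so $F[S_0]$ is infinite, contradicting finitariness. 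The missing idea in your proposal is exactly this expansion-versus-trapping mechanism: some single cellular entourage must confine an exponentially growing set inside one finite ball. If you want to salvage your approach, you would need an analogous quantitative engine; the monotonicity and directedness observations alone cannot supply it.
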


\begin{proof} To derive a contradiction, assume that there exists a large finitary cellular coarse structure $\E$ on an infinite set $X$. Since large spaces are indiscrete, we can apply Proposition~\ref{p2.10} and construct inductively a decreasing sequence $\{I_n\}_{n\in\w}\subseteq[X]^\w$ of infinite sets in $X$ and an increasing sequence of entourages $\{E_n\}_{n\in\w}\subseteq\E$  such that for every $n\in\w$ the following conditions are satisfied:
\begin{itemize}
\item[($a_n$)] $|E_n(x)|\ge n$ for any $x\in I_n$;
\item[($b_n$)] $E_n(x)\cap E_n(y)=\emptyset$ for any distinct points $x,y\in I_n$.
\end{itemize}  
Choose an infinite set $I\subseteq X$ such that $I\setminus I_n$ is finite for every $n\in\w$. Since the coarse space $(X,\E)$ is large, the set $I$ is $\E$-large and hence $L[I]=X$ for some entourage $L\in\E$.
 Since $\E\ni L$ is finitary, the cardinal $l=1+\sup_{x\in X}|L(x)|\ge 2$ is finite. Since the coarse space $(X,\E)$ is finitary and cellular, there exists a cellular finitary entourage $F\in\E$ such that $E_{l}\cup L\subseteq F$. %Find $m\in\w$ such that $\sup_{x\in X}|F[x]|<m$. Since the set $I\setminus I_n$ is finite, so is the set $F[I\setminus I_n]$.
 
 Since $X=L[I]$, we can find a function $\varphi:X\to I$ such that $x\in L(\varphi(x))$ for every $x\in X$ and $\varphi(x)=x$ for every $x\in I$. For every $y\in I$, the preimage $\varphi^{-1}(y)$ has cardinality $|\varphi^{-1}(y)|\le|L(y)|<l$.

Take any singleton $S_0\subset I\setminus F[I\setminus I_l]$ and observe that $F[S_0]\cap F[I\setminus I_l]=\emptyset$, by the cellularity of the entourage $F$.
Then $I\cap F[S_0]\subset I\setminus F[I\setminus I_l]\subseteq I\setminus(I\setminus I_l)=I\cap I_l$.

Consider the sequence of finite sets $(S_n)_{n\in\w}$ defined by the recursive formula: $S_{n+1}=\varphi\big[E_{l}[S_n]\big]$ for $n\in\w$.  

\begin{claim}\label{cl:cardinality} For every $n\in\w$ we have $S_n\subseteq I\cap F[S_0]$ and $|S_n|\ge(\frac{l}{l-1})^n$.
\end{claim}

\begin{proof} For $n=0$ the claim follows from the choice of the set $S_0$. Assume that for some $n\in\w$ we have proved that $S_n\subseteq I\cap F[S_0]$ and $|S_n|\ge (\frac{l}{l-1})^n$. The inclusion $S_n\subseteq I\cap F[S_0]\subseteq I\cap I_l$ and the conditions ($b_l$) and ($a_l$) imply that the family $(E_l(x))_{x\in S_n}$ is disjoint and the set $E_l[S_n]$ has cardinality $\ge l\cdot |S_n|$.  Taking into account that $|\varphi^{-1}(y)|\le l-1$ for every $y\in X$, we conclude that the image $S_{n+1}=\varphi\big[E_l[S_n]\big]$ has cardinality $$|S_{n+1}|\ge \frac{|E_l[S_n]|}{l-1}\ge \frac{l}{l-1}\cdot |S_n|\ge \frac{l}{l-1}\Big(\frac{l}{l-1}\Big)^n=\Big(\frac{l}{l-1}\Big)^{n+1}.$$ The cellularity of the entourage $F\supseteq L\cup E_l$ ensures that $S_{n+1}=\varphi\big[E_l[S_n]\big]\subseteq LE_l[S_n]\subseteq F[F[S_0]]=F[S_0]$ hence $S_{n+1}\subseteq I\cap F[S_0]$. This completes the proof of the claim.
\end{proof}

By Caim~\ref{cl:cardinality}, the set $F[S_0]\supseteq\bigcup_{n\in\w}S_n$ is infinite, which is impossible as the entourage $F$ is finitary.
\end{proof}

\begin{example}\label{ex:ESX} For any (countable) set $X$ the coarse space $(X,\IE_\w[X])$ endowed with the largest finitary coarse structure is inseparated (and large).
\end{example}

\subsection{Normal coarse spaces} The normality of coarse spaces was defined by Protasov \cite{P03} as a large-scale counterpart of the normality in topological spaces.

Let $(X,\E)$ be a coarse space. A subset $U\subseteq X$ is called an {\em asymptotic neighborhood} of a set $A\subseteq X$ if for any entourage $E\in\E$ the set $E[A]\setminus U$ is $\E$-bounded.

A coarse space $(X,\E)$ is {\em normal} if any two $\E$-separated sets in $X$ have disjoint asymptotic neighborhoods. By \cite{P03}, each metrizable coarse space is normal. Also each inseparated coarse space is normal (since  it contains no unbounded $\E$-separated sets).

The classical Urysohn Lemma \cite[1.5.11]{Eng} has its counterpart in large-scale topology. According to \cite{P03}, for any disjoint $\E$-separated sets $A,B$ in a normal coarse space $(X,\E)$ there exists a slowly oscillating  function $f:X\to[0,1]$ such that $f(A)\subseteq \{0\}$ and $f(B)\subseteq\{1\}$. We recall that a function $f:X\to\IR$ is {\em slowly oscillating} if for any $E\in\E$ and positive real number $\e$ there exists a bounded set $B\subseteq X$ such that $\mathrm{diam}(E(x))<\e$ for any $x\in X\setminus B$.

\subsection{Bounded growth of coarse spaces} Following \cite{BPn}, we say that a coarse space $(X,\E)$ has {\em bounded growth} if there exists a function $f:X\to\mathcal B$ to the bornology $\mathcal B$ of $(X,\E)$ such that for every entourage $E\in\E$ there exists a bounded set $B\in\mathcal B$ such that $E(x)\subseteq f(x)$ for every $x\in X\setminus B$. A coarse space has {\em unbounded growth} if it fails to have bounded growth.

The following theorem is proved in \cite{BPn}.

\begin{theorem}[Banakh, Protasov] Let $X,Y$ be two unbounded coarse spaces. If the product $X\times Y$ is normal, then the coarse spaces $X$ and $Y$ have bounded growth.  
\end{theorem}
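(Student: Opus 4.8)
The plan is to prove the contrapositive for one factor: if $X=(X,\E_X)$ has unbounded growth and $Y=(Y,\E_Y)$ is unbounded, then the product $X\times Y$ fails to be normal; since the hypothesis and conclusion are symmetric in $X$ and $Y$, this also yields bounded growth of $Y$. Unwinding the definition, unbounded growth of $X$ means precisely that for every function $f\colon X\to\mathcal B$ into the bornology $\mathcal B$ of $X$ there is an entourage $E\in\E_X$ for which the set $\{x\in X:E(x)\not\subseteq f(x)\}$ is $\E_X$-unbounded (and one may harmlessly assume $x\in f(x)$ and that $E$ is symmetric). To exhibit non-normality I would produce two $\E$-separated subsets $P,Q\subseteq X\times Y$ that admit no disjoint asymptotic neighborhoods; equivalently, via the coarse Urysohn lemma quoted above, a disjoint separated pair that no slowly oscillating function separates.

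First I would fix a \emph{scale ladder} in $Y$: using unboundedness of $Y$, choose an $\E_Y$-unbounded, $\E_Y$-discrete sequence $(y_n)_{n\in\w}$, so that for each $F\in\E_Y$ the balls $F(y_n)$ are pairwise disjoint for all large $n$ and the sequence leaves every bounded set. I would then place the two sets over these levels as $P=\bigcup_n A_n\times\{y_n\}$ and $Q=\bigcup_n B_n\times\{y_n\}$ with disjoint bounded blocks $A_n,B_n\subseteq X$ chosen by recursion on $n$. With this layout the discreteness of the ladder reduces $\E$-separation of $P$ and $Q$ to the one-sided condition that for every $E_X\in\E_X$ one has $E_X[A_n]\cap E_X[B_n]=\emptyset$ for all but finitely many $n$; I would secure this by making the blocks ``$n$-far apart'' at stage $n$. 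Dually, testing the asymptotic-neighborhood condition against entourages of the form $E_X\times\Delta_Y$ shows that any asymptotic neighborhood $U\supseteq P$ must contain, in its level sections $U^{(n)}=\{x:(x,y_n)\in U\}$, the whole ball-image $E_X[A_n]$ for all large $n$, and likewise any asymptotic neighborhood $V\supseteq Q$ satisfies $V^{(n)}\supseteq E_X[B_n]$ eventually. The recursion must therefore be engineered so that, using unbounded growth at each stage, the blocks $B_n$ grow fast enough to \emph{chase} any prospective neighborhood of $P$, while remaining far from $A_n$ at every fixed scale.

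The heart of the argument, and the step I expect to be the main obstacle, is showing that $P$ and $Q$ have no disjoint asymptotic neighborhoods. I would argue by contradiction from arbitrary asymptotic neighborhoods $U\supseteq P$ and $V\supseteq Q$: from the section estimates above I would assemble a single function $f\colon X\to\mathcal B$ recording the blocks and the forced section-content, and apply unbounded growth to obtain an entourage $E$ together with an unbounded set of points at which $E(\,\cdot\,)$ escapes $f$; by the way the blocks were laid out along the ladder, this overflow lands, at some level $n$, a common point in both $U^{(n)}$ and $V^{(n)}$, whence $U\cap V\neq\emptyset$. The genuine difficulty is that $\E$-separation directly forbids the \emph{naive} overlap of the $E_X$-neighborhoods of $A_n$ and $B_n$, so the non-separability must be extracted from the \emph{growth} of the blocks rather than from coarse connectedness; reconciling these two opposing demands on the blocks — far apart at every fixed scale yet growing fast enough to defeat every candidate bounding function — is exactly what the diagonalization along $(y_n)$ is meant to accomplish. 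A secondary technical hurdle is that a badly non-metrizable $Y$ (for instance $\w_1$ with the coarse structure of countable-ball entourages) need not contain any countable unbounded discrete sequence, so the ladder construction would have to be carried out either after passing to a well-behaved unbounded subspace of $Y$ or in a transfinite version indexed by a cofinal family in the bornology of $Y$.
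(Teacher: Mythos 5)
First, note that the paper you are being compared against contains no proof of this statement: it is quoted from the preprint \cite{BPn}, so there is no internal argument to match, and your proposal must stand on its own. It does not, and the failure is concrete rather than a matter of unfinished bookkeeping. Your plan requires the sets $P=\bigcup_n A_n\times\{y_n\}$ and $Q=\bigcup_n B_n\times\{y_n\}$ to be $\E$-separated, which you correctly reduce to: for every $E_X\in\E_X$, the sets $E_X[A_n]$ and $E_X[B_n]$ are eventually disjoint. But take $X=(\w,\IE_\w)$, the paper's motivating example, which has unbounded growth (Example~\ref{ex:nonorm}) and is inseparated (Example~\ref{ex:ESX}), and any unbounded $Y$. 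Whatever disjoint nonempty blocks $A_n,B_n$ you choose, pick representatives $a_n\in A_n$, $b_n\in B_n$ (distinct across levels, passing to a subsequence of levels if needed); then $E=\Delta_\w\cup\bigcup_{n\in\w}\{a_n,b_n\}^2$ is a finitary entourage with balls of size at most $2$, hence $E\in\IE_\w=\E_X$, and $E[A_n]\cap E[B_n]\supseteq\{a_n,b_n\}\ne\emptyset$ for \emph{every} $n$. Combined with any $F_Y\in\E_Y$, the resulting product entourage gives an unbounded overlap of the $E$-neighborhoods of $P$ and $Q$ along the ladder. So for this $X$ your sets are never separated, and an unseparated pair is useless as a witness against normality (normality constrains only separated pairs). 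The ``two opposing demands'' you hope to reconcile by diagonalization are thus genuinely irreconcilable in your layout: this is exactly the inseparatedness of $(\w,\IE_\w)$, not a technicality. A witnessing pair would have to have a different shape (for instance, supported on disjoint infinite sets of levels of the ladder, where separation comes for free from discreteness of $(y_n)$ and all of the work moves to the asymptotic-neighborhood stage).

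Two further holes remain even where blocks could be kept apart. First, your recursion controls only countably many scales (``$n$-far apart at stage $n$''), while $\E_X$ may have uncountable weight with no countable cofinal subfamily, so eventual disjointness of $E_X[A_n]$ and $E_X[B_n]$ for an \emph{arbitrary} fixed $E_X\in\E_X$ is not secured; likewise the ladder $(y_n)$ need not exist in $Y$ (you notice this yourself, e.g.\ for $\w_1$ with countable-ball entourages, where every countable set is bounded), and the transfinite repair is only gestured at, although the ``for all but finitely many $n$'' arguments do not transfer along an uncountable chain. Second, the quantifier order in your central diagonalization is backwards: unbounded growth supplies, for each candidate function $f:X\to\mathcal B$, an entourage $E_f$ overflowing $f$ on an unbounded set, but the $f$ you propose to feed it is assembled from the asymptotic neighborhoods $U,V$ --- and those are chosen by the adversary \emph{after} $P,Q$ are fixed. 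A single construction must defeat all pairs $(U,V)$ simultaneously, and you give no mechanism for this; indeed you flag this step as ``the main obstacle,'' which is an accurate self-assessment: as written, the proposal is a program with its essential step missing and, in the key example, with its separation requirement provably unsatisfiable.
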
 

\begin{example}\label{ex:nonorm} For an infinite space $X$, the largest finitary  coarse structure $\IE_\w[X]$ on $X$ has unbounded growth and hence  the coarse space $(X,\IE_\w[X])\times (X,\IE_\w[X])$ is not normal and hence not inseparated. On the other hand, it can be shown that $(X,\IE_\w[X])\times (X,\IE_\w[X])$ is indiscrete.
\end{example}

\subsection{A filter perturbation of a coarse space} In this subsection we recall the construction of a filter perturbation of a coarse space, introduced by Petrenko and Protasov in \cite{PP12}.

For a coarse space $(X,\E)$ and a free filter $\varphi$ on $X$, consider 
the coarse structure $\E_\varphi$ on $X$, generated by the base
$$\big\{\Delta_X\cup\{(x,y)\in E:x,y\notin\Phi\}:E\in\E,\;\Phi\in\varphi\big\}.$$
The coarse space $(X,\E_\varphi)$ is called a {\em filter perturbation} of $(X,\E)$.  
If $\varphi$ is a free ultrafilter, then $(X,\E_\varphi)$ is called an {\em ultrafilter perturbation} of $(X,\E)$. 

If a coarse space is locally finite, finitary or cellular, then so are its filter perturbations. The ultrafilter perturbations also preserve the indiscreteness and inseparatedness of locally finite coarse spaces.

\begin{proposition}\label{p:perturb-ind} If a locally finite coarse space $(X,\E)$ is indiscrete, then for any free ultrafilter $\varphi$ on $X$ the coarse space $(X,\E_\varphi)$ is indiscrete.
\end{proposition}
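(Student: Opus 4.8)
The plan is to exploit two preliminary observations. First, the perturbed structure is contained in the original one: every basic entourage $\Delta_X\cup\{(x,y)\in E:x,y\notin\Phi\}$ is a subset of $E\in\E$, so $\E_\varphi\subseteq\E$ and hence every $\E_\varphi$-bounded set is $\E$-bounded. Since $(X,\E)$ is locally finite, $\E$-bounded means finite, so in the perturbation too a set is bounded if and only if it is finite. Consequently, to prove that $(X,\E_\varphi)$ is indiscrete it suffices to show that no infinite subset $D\subseteq X$ is $\E_\varphi$-discrete. So I would fix an infinite $D$ and aim for a contradiction with the assumption that $D$ is $\E_\varphi$-discrete. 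Because $(X,\E)$ is indiscrete and $D$ is infinite, hence $\E$-unbounded, $D$ cannot be $\E$-discrete: there is an entourage $E\in\E$ for which the set $B=\{x\in D:D\cap E(x)\ne\{x\}\}$ is infinite.

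Next I would pass to a graph-theoretic picture. Define a graph on the vertex set $D$ by joining $x\ne y$ whenever $(x,y)\in E$ or $(y,x)\in E$ (recall $E^{-1}\in\E$, so this is symmetric). Local finiteness of $\E$ makes every ball $E(x)\cup E^{-1}(x)$ finite, so this graph has finite vertex degrees; as $B$ is infinite, the graph has infinitely many edges. A standard greedy argument (pick an edge, delete the finite closed neighbourhood of its endpoints, and repeat) then produces an infinite matching $M=\{\{a_i,b_i\}:i\in\w\}$ with pairwise distinct endpoints, where for each $i$ one of $(a_i,b_i),(b_i,a_i)$ lies in $E$.

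The ultrafilter enters at the final step, which I also expect to be the only genuinely delicate point. The danger is that $D$ may belong to $\varphi$, so that a set $\Phi\in\varphi$ deleted by the perturbation could swallow almost all witnessing pairs and make $E_\Phi$ discretize $D$ after all. To sidestep this I would split $M$ into two sub-matchings with disjoint endpoint sets $S'$ and $S''$. Being disjoint, $S'$ and $S''$ cannot both lie in the ultrafilter $\varphi$; say $S'\notin\varphi$, so that $\Phi:=X\setminus S'\in\varphi$. Every edge of the sub-matching spanning $S'$ then has both endpoints outside $\Phi$. Passing to the basic entourage $E_\Phi:=\Delta_X\cup\{(x,y)\in E:x,y\notin\Phi\}\in\E_\varphi$, for each such retained edge the relevant pair survives in $E_\Phi$, so one of its endpoints $x$ satisfies $D\cap E_\Phi(x)\ne\{x\}$.

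Since the matching endpoints are distinct, this produces infinitely many such $x$, so $\{x\in D:D\cap E_\Phi(x)\ne\{x\}\}$ is infinite and therefore not $\E_\varphi$-bounded (it is not even $\E$-bounded). This contradicts the assumed $\E_\varphi$-discreteness of $D$, forcing $D$ to be finite and completing the argument. The crux is the matching-plus-splitting device: it is exactly what guarantees, using only the defining property of an ultrafilter that it cannot contain two disjoint sets, that one may discard a $\varphi$-large set $\Phi$ while keeping infinitely many edges fully intact outside it.
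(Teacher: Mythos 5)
Your proof is correct and follows essentially the same route as the paper's: both use indiscreteness of $(X,\E)$ to get an entourage $E$ with infinitely many witnessing points in $D$, use local finiteness to thin these out to an infinite family of pairwise disjoint witnessing pairs, split that family into two halves with disjoint traces, and invoke the fact that a free ultrafilter cannot contain two disjoint sets to obtain $\Phi\in\varphi$ for which the entourage $E_\Phi=\Delta_X\cup\{(x,y)\in E:x,y\notin\Phi\}$ retains infinitely many pairs. The only cosmetic differences are that you phrase the extraction as a greedy matching and take $\Phi$ to be the complement of the endpoint set $S'$, whereas the paper selects points with pairwise disjoint $E$-balls and takes $\Phi=X\setminus E[J_k]$.
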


\begin{proof} Assume that the coarse space $(X,\E)$ is indiscrete. To show that the coarse space $(X,\E_\varphi)$ is indiscrete, we should prove that it contains no infinite $\E_\varphi$-discrete subsets. Fix any infinite set $D\subseteq X$. Since $(X,\E)$ is indiscrete, the infinite set $D$ is not $\E$-discrete. Consequently, there exists an entourage $E\in\E$ such that the set $I=\{x\in D:D\cap E(x)\ne\{x\}\}$ is infinite. Using the local finiteness of the entourage $E$, we can choose an infinite subset $J\subseteq I$ such that $E(x)\cap E(y)=\emptyset$ for any distinct points $x,y\in J$.
Write $J$ as the union $J=J_0\cup J_1$ of two disjoint infinite sets. The choice of $J$ guarantees that the sets $E[J_0],E[J_1]$ are disjoint. Then for some $k\in\{0,1\}$, the set $E[J_k]$ does not belong to the filter $\varphi$.  Since $\varphi$ is an ultrafilter, the set $\Phi=X\setminus E[J_i]$ belongs to $\varphi$. Then for the entourage $E_\Phi=\Delta_X\cup\{(x,y)\in E:x,y\notin \Phi\}\in\E_\varphi$, the set $\{x\in D:D\cap E_\Phi[x]\ne\{x\}\}\supseteq J_k$ is infinite, witnessing that the set $D$ is not $\E_\varphi$-discrete. 
\end{proof}

\begin{proposition}\label{p:perturb-ultra} If a locally finite coarse space $(X,\E)$ is inseparated, then for any free ultrafilter $\varphi$ on $X$ the coarse space $(X,\E_\varphi)$ is inseparated.
\end{proposition}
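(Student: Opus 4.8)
The plan is to argue by contraposition, mirroring the proof of Proposition~\ref{p:perturb-ind} but replacing "discreteness" obstructions with "separation" obstructions. Since filter perturbations preserve local finiteness, the bornology of $\E_\varphi$ coincides with $[X]^{<\w}$; hence $\E_\varphi$-bounded means finite, $(X,\E_\varphi)$ is unbounded, and a set is $\E_\varphi$-unbounded iff it is infinite. Suppose, for contradiction, that $(X,\E_\varphi)$ is not inseparated, and fix two infinite sets $A,B\subseteq X$ that are $\E_\varphi$-separated. Because $(X,\E)$ is inseparated and $A,B$ are $\E$-unbounded, the pair $A,B$ cannot be $\E$-separated; hence there is an entourage $E\in\E$ with $E[A]\cap E[B]$ infinite. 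Replacing $E$ by $E\cup E^{-1}$, I may assume $E$ is symmetric.

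Next I would record a ``bridge'' structure on the infinite set $C:=E[A]\cap E[B]$. For each $z\in C$, symmetry lets me choose $a_z\in A\cap E(z)$ and $b_z\in B\cap E(z)$, so that the whole triple $\{z,a_z,b_z\}$ lies inside the single ball $E(z)$. Using the local finiteness of $E$, a routine greedy selection yields an infinite subset $I\subseteq C$ for which the balls $\{E(z):z\in I\}$ are pairwise disjoint; consequently the triples $\{z,a_z,b_z\}$ with $z\in I$ are supported on pairwise disjoint balls.

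The heart of the argument --- and the step I expect to be the main obstacle --- is to convert the single ``bad'' entourage $E\in\E$ into a genuinely bad perturbed entourage $E_\Phi=\Delta_X\cup\{(x,y)\in E:x,y\notin\Phi\}$ with $\Phi\in\varphi$; that is, to find $\Phi\in\varphi$ for which infinitely many $z\in I$ satisfy $z,a_z,b_z\notin\Phi$ simultaneously. Here the ultrafilter is used decisively. I would split $I$ into countably many infinite pieces $I=\bigsqcup_{n\in\w}J_n$ and set $\Psi_n:=\bigcup_{z\in J_n}\{z,a_z,b_z\}$. Since the supporting balls are disjoint, the sets $\Psi_n$ are pairwise disjoint, so at most one of them can belong to the ultrafilter $\varphi$; I pick any $n$ with $\Psi_n\notin\varphi$ and put $\Phi:=X\setminus\Psi_n\in\varphi$. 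For every $z\in J_n$ we then have $a_z,z,b_z\notin\Phi$ together with $(a_z,z),(b_z,z)\in E$, whence $z\in E_\Phi[A]\cap E_\Phi[B]$. Thus $E_\Phi[A]\cap E_\Phi[B]\supseteq J_n$ is infinite, contradicting the $\E_\varphi$-separatedness of $A$ and $B$.

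The only delicate point is the passage through the ultrafilter: a naive attempt to keep $A,B$ and merely discard connections through $\Phi$ fails, because the relevant bridge points $z$ and their witnesses $a_z,b_z$ must all avoid $\Phi$ at once, and an ill-chosen $\Phi\in\varphi$ can destroy every triple. The disjoint-support partition is exactly what tames this: it forces all but one of the triple-supports $\Psi_n$ out of $\varphi$, so a usable $\Phi$ is guaranteed. Local finiteness enters twice --- to make the balls disjoint and to keep each perturbed entourage well-behaved --- and is essential to the whole scheme.
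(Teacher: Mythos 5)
Your proof is correct and takes essentially the same route as the paper's: both extract a symmetric entourage $E\in\E$ with $E[A]\cap E[B]$ infinite from the inseparatedness of $(X,\E)$, choose witnesses $a_z\in A\cap E(z)$ and $b_z\in B\cap E(z)$, disjointify the balls via local finiteness, and use disjointness of the supports to force one piece out of the ultrafilter, taking $\Phi$ to be its complement so that $E_\Phi\in\E_\varphi$ retains infinitely many bridges. The only (immaterial) differences are that you argue by contraposition and split $I$ into countably many pieces with $\Phi=X\setminus\Psi_n$, whereas the paper splits into just two halves $J_0,J_1$ and takes $\Phi=X\setminus E[J_k]$.
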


\begin{proof} Assume that the coarse space $(X,\E)$ is inseparated. To show that the coarse space $(X,\E_\varphi)$ is inseparated, we should prove that it contains no pair of two $\E_\varphi$-separated infinite sets. Fix any infinite sets $A,B\subseteq X$.  Since $(X,\E)$ is inseparated, the  sets $A,B$ are not $\E$-separated. Consequently, there exists a symmetric entourage $E\in\E$ such that the set $I=E[A]\cap E[B]$ is infinite. Choose functions $\alpha:I\to A$ and $\beta:I\to B$ such that for every $x\in I$ we have $x\in E(\alpha(x))\cap E(\beta(x))$ and hence $\alpha(x),\beta(x)\in E^{-1}(x)=E(x)$. Using the local finiteness of the coarse structure $\E$, we can choose an infinite set $J\subseteq I$ such that $E(x)\cap E(y)=\emptyset$ for any distinct points $x,y\in J$. Write $J$ as the union $J=J_0\cup J_1$ of two disjoint infinite sets. The choice of $J$ guarantees that the sets $E[J_0]$ and $E[J_1]$ are disjoint and hence $E[J_k]\notin\varphi$ for some $k\in\{0,1\}$. Since $\varphi$ is an ultrafilter, the set $\Phi=X\setminus E[J_k]$ belongs to $\varphi$. Consider the entourage $E_\Phi=\Delta_X\cup\{(x,y)\in E:x,y\notin \Phi\}\in\E_\varphi$ and observe that for every $x\in J_k$ we have $\alpha(x),\beta(x)\in E(x)=E_\Phi(x)=E_\Phi^{-1}(x)$ and hence $x\in E_\Phi(\alpha(x))\cap E_\Phi(\beta(x))\subseteq E_\Phi[A]\cap E_\Phi[B]$. Then the set $\E_\Phi[A]\cap E_\Phi[B]\supseteq J$ is infinite, witnessing that the sets $A,B$ are not $\E_\varphi$-separated. 
\end{proof}
%
%\begin{example}\label{ex:perturb} Given a coarse space $(X,\E)$ and a free ultrafilter $\varphi$ on $X$, consider the coarse structure $\E_\varphi$ on $X$, generated by the base
%$$\big\{\{(x,y)\in E:x,y\notin\Phi\}\cup\{(x,x):x\in\Phi\}:E\in\E,\;\Phi\in\varphi\big\}.$$
%If the coarse space $(X,\E)$ is finitary, cellular, or inseparated, then so is the coarse space $(X,\E_\varphi)$. Moreover, if $(X,\E)$ is inseparated, then $\E_\varphi\ne\E_\psi$ for any distinct free ultrafilters $\varphi,\psi$ on $X$. 
%\end{example}

The following proposition allows us to produce $2^{\mathfrak c}$ ultrafilter perturbations of a non-discrete coarse space.

\begin{proposition}\label{p:perturb} Let $(X,\E)$ a coarse space and $\varphi,\psi$ be two distinct free ultrafilters on $X$. If for some $E\in\E$ the set $\{x\in X:E(x)\ne \{x\}\}$ belongs to $\varphi\cap\psi$, then $\E_\varphi\ne\E_\psi$.
\end{proposition}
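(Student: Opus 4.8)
The plan is to reduce the inequality $\E_\varphi\ne\E_\psi$ to a statement about \emph{supports} of entourages. For an entourage $T$ write $N(T):=\{x\in X:T(x)\ne\{x\}\}$. The first thing I would isolate is a characterization for symmetric entourages: if $T\in\E$ satisfies $T=T^{-1}$ and $\sigma$ is a free ultrafilter on $X$, then $T\in\E_\sigma$ if and only if $N(T)\notin\sigma$. Indeed, if $T\subseteq\Delta_X\cup\{(x,y)\in E':x,y\notin\Phi\}$ for some $E'\in\E$ and $\Phi\in\sigma$, then each $x\in N(T)$ occurs in a non-diagonal pair of $T$, hence $x\notin\Phi$ and $N(T)\subseteq X\setminus\Phi\notin\sigma$. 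Conversely, if $N(T)\notin\sigma$, put $\Phi:=X\setminus N(T)\in\sigma$; by symmetry of $T$ every non-diagonal pair of $T$ has both coordinates in $N(T)$, so $T=\Delta_X\cup\{(x,y)\in T:x,y\notin\Phi\}$ is a basic entourage of $\E_\sigma$ (here $T\in\E$ because $\E$ is closed under sub-entourages).

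Granting this, it suffices to produce one symmetric entourage $T\in\E$ with $T\subseteq E\cup E^{-1}$ whose support $N(T)$ lies in exactly one of $\varphi,\psi$: if $N(T)\in\varphi\setminus\psi$ then $T\in\E_\psi\setminus\E_\varphi$, and if $N(T)\in\psi\setminus\varphi$ then $T\in\E_\varphi\setminus\E_\psi$; either way $\E_\varphi\ne\E_\psi$. Replacing $E$ by $E\cup E^{-1}\in\E$ (whose support still contains the original set $\{x:E(x)\ne\{x\}\}\in\varphi\cap\psi$), I may assume $E$ is symmetric, and I fix for each $x\in S:=\{x:E(x)\ne\{x\}\}$ a witness $f(x)\in E(x)\setminus\{x\}$; any subgraph of the graph with edges $\{x,f(x)\}$ gives, together with $\Delta_X$, a symmetric sub-entourage of $E$ whose support is its set of non-isolated vertices.

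To build $T$, I would use $\varphi\ne\psi$ to fix $P\in\varphi$ with $P\notin\psi$, so that $X\setminus P\in\psi\setminus\varphi$, and split $S$ according to where $x$ and $f(x)$ sit relative to $P$. Two cases are clean. If $\{x\in S\cap P:f(x)\in P\}\in\varphi$, let $T$ consist of the edges $\{x,f(x)\}$ over this set; then $N(T)\subseteq P$, so $N(T)\in\varphi$ and $N(T)\notin\psi$, i.e. $N(T)\in\varphi\setminus\psi$. Symmetrically, if $\{x\in S\setminus P:f(x)\notin P\}\in\psi$, the corresponding $T$ has $N(T)\subseteq X\setminus P$, giving $N(T)\in\psi\setminus\varphi$. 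In both cases a point and its chosen partner stay on the same side of the cut $P$, confining the support to that side so that it lands in precisely one ultrafilter.

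The main obstacle is the remaining \emph{crossing} configuration, where simultaneously $\{x\in S\cap P:f(x)\notin P\}\in\varphi$ and $\{x\in S\setminus P:f(x)\in P\}\in\psi$: now every witness edge straddles the cut, so the naive support $\{x,f(x)\}$ meets both $P$ and $X\setminus P$ and threatens to lie in $\varphi\cap\psi$. To overcome this I would examine the images $f[\,\cdot\,]$ of these two sets and try to extract, inside $P$ or inside $X\setminus P$, a set large for one ultrafilter, small for the other, whose chosen edges keep the partner on the same side—passing to a second witness in $E(x)\setminus\{x\}$ or re-cutting along a better-adapted $P$ when necessary. Pinning these images down against $\varphi$ and $\psi$ at once is the delicate step, and it is exactly here that the hypothesis $S\in\varphi\cap\psi$ together with $\varphi\ne\psi$ (and, I suspect, the presence in $\E$ of entourages beyond a single matching-like $E$) must be used decisively.
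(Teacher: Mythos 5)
Your preliminary characterization is correct, and it is in fact the right lens for the whole problem: for any $D$ one has $D\in\E_\sigma$ if and only if $D\in\E$ and the two-sided support $N^{\pm}(D):=\{x\in X:(D\cup D^{-1})(x)\ne\{x\}\}$ satisfies $N^{\pm}(D)\notin\sigma$ (your argument for symmetric $T$ goes through verbatim with $N^{\pm}$ in place of $N$). Your two non-crossing cases are also correct. Measured against the paper: the paper's proof picks disjoint sets $\Phi\in\varphi$, $\Psi\in\psi$ inside $S=\{x:E(x)\ne\{x\}\}$ and asserts that ``it is easy to see'' that $E_\Phi=\Delta_X\cup\{(x,y)\in E:x,y\notin\Phi\}$ lies in $\E_\varphi\setminus\E_\psi$. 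Through your characterization, that assertion is exactly the claim $N^{\pm}(E_\Phi)\in\psi$, i.e.\ that $\psi$-almost all points of $\Psi$ have an $E$-partner outside $\Phi$ --- which is precisely the statement that your crossing configuration does not occur. So up to the crossing case, your route and the paper's are the same argument, with yours spelled out more carefully.

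The crossing case, however, is a genuine gap in your proposal --- and your closing suspicion is exactly right: it cannot be closed from the stated hypotheses, because in that configuration the Proposition is false for a general coarse space. Take $X=\{a_i:i\in\w\}\sqcup\{b_i:i\in\w\}$, let $E=\Delta_X\cup\{(a_i,b_i),(b_i,a_i):i\in\w\}$ be the perfect matching, and let $\E$ be the smallest coarse structure containing $E$; since $E$ is cellular, every $D\in\E$ is contained in $E\cup T$ for some trivial entourage $T$. Fix one free ultrafilter $u$ on $\w$ and let $\varphi,\psi$ be its images under $i\mapsto a_i$ and $i\mapsto b_i$. Then $\varphi\ne\psi$ and $\{x:E(x)\ne\{x\}\}=X\in\varphi\cap\psi$, but for every $D\in\E$ the support $N^{\pm}(D)$ coincides, modulo a finite set, with a matching-saturated set $\{a_i,b_i:i\in A\}$, which belongs to $\varphi$ iff $A\in u$ iff it belongs to $\psi$; hence $\E_\varphi=\{D\in\E:N^{\pm}(D)\notin\varphi\}=\{D\in\E:N^{\pm}(D)\notin\psi\}=\E_\psi$. (Concretely, for $\Phi=\{a_i:i\in\w\}\in\varphi$ the paper's witness $E_\Phi$ degenerates to $\Delta_X\in\E_\psi$, so the ``easy to see'' step fails; and since the matching gives each point a unique partner, no passage to a second witness or re-cutting of $P$ can help.) Thus the step you could not complete is not a missing idea on your part but an unjustified step in the paper's own proof. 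The conclusion does hold for the coarse structures to which the paper actually applies the Proposition: e.g.\ for $\E=\IE_\w$ one picks $\Psi\in\psi\setminus\varphi$ and a matching of $\Psi$ \emph{with itself}, whose support is $\Psi$ and so lies in exactly one of the two ultrafilters --- confirming your guess that entourages of $\E$ beyond the single given $E$ must be used, and indicating that a correct formulation needs a stronger hypothesis (for instance $\IE_2^\circ\subseteq\E$, or: some $E\in\E$ has $N^{\pm}(E)$ in exactly one of $\varphi,\psi$).
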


\begin{proof} Since $\varphi\ne\psi$, there are two sets $\Phi\in\varphi$ and $\Psi\in\psi$ such that $\Phi\cap\Psi=\emptyset$ and $\Phi\cup\Psi\subseteq \{x\in X:E(x)\ne\{x\}\}$. Consider the entourages $E_\Phi=\Delta_X\cup\{(x,y)\in E:x,y\notin\Phi\}\in\E_\varphi$ and $E_\Psi=\Delta_X\cup\{(x,y)\in E:x,y\notin\Psi\}\in\E_\psi$. It is easy to see that $E_\Phi\in\E_\varphi\setminus\E_\psi$ and $E_\Psi\in\E_\psi\setminus\E_\varphi$. Therefore, $\E_\psi\ne\E_\varphi$.
\end{proof}

Perturbing the inseparated finitary coarse space $(\w,\IE_\w)$ with different free ultrafilters $\varphi$ we obtain $2^{\mathfrak c}$ different inseparated finitary coarse spaces. But none of them is cellular. This leads to the following question.

\begin{question} Can a cellular finitary coarse space be inseparated?
\end{question}

A consistent affirmative answer to this question will be given in Lemma~\ref{l:ultraexist}.

\section{Some cardinal characteristics of the continuum}\label{s:ccc}

In this section we recall some information on selected cardinal characteristics of the continuum and also introduce two new cardinal characteristics $\Adelta$ and $\Asigma$, called the discreteness and separateness numbers, respectively. Cardinals are identified with the smallest ordinals of a given cardinality; ordinals are identified with the sets of smaller ordinals. %For any cardinal $\kappa$ by $\kappa^+$ we denote the smallest cardinal, which is larger than $\kappa$. For example, $\w^+=\w_1$.

\subsection{Selected classical cardinal characteristics of the continuum} As a rule, cardinal characteristics of the continuum are cardinal characteristics of some (pre)ordered sets related to the first infinite cardinal $\w$. 
Two such preordered sets are of exceptional importance: $\w^\w$ and $[\w]^\w$.

The set $\w^\w$ of all functions from $\w$ to $\w$ carries the preorder $\le^*$ defined by $f\le^* g$ for $f,g\in\w^\w$ iff the set $\{n\in\w:f(n)\not\le g(n)\}$ is finite.

 The set $[\w]^\w$ of all infinite subsets of $\w$ carries the preorder $\subseteq^*$ defined by $a\subseteq^* b$ for $a,b\in[\w]^\w$ if the set $a\setminus b$ is finite.

%For two functions $f,g\in\w^\w$ we write $f\le^* g$ if the set $\{n\in\w:f(x)\not\le g(x)\}$ is finite. For two infinite subsets $A,B$ of $\w$ we write $A\subseteq^* B$ if the complement $A\setminus B$ is finite. We denote by $[\w]^\w$ and $[\w]^{<\w}$ the families of all infinite and all finite subsets of $\w$, respectively.

Consider the following cardinal characteristics of the preordered sets $(\w^\w,\le^*)$, $([\w]^\w,\subseteq^*)$:
%\begin{itemize}
%\item[] 

$\mathfrak b:=\min\{|B|:B\subseteq \w^\w$ and $\forall f\in\w^\w\;\exists g\in B\;\; g\not\le^* f\}$;

$\mathfrak d:=\min\{|D|:D\subseteq \w^\w$ and $\forall f\in\w^\w\;\exists g\in D\;\; f\le^* g\}$;

$\mathfrak r:=\min\{|R|:R\subseteq[\w]^\w$ and $\forall a\in[\w]^\w\;\exists r\in R\;\;(r\subseteq^*a$ or $r\subseteq^*\w\setminus a)\}$;

$\mathfrak s:=\min\{|S|:S\subseteq[\w]^\w$ and $\forall a\in[\w]^\w\;\exists s\in S\;\;(a\not\subseteq^* s\;\wedge\;a\not\subseteq^*\w\setminus s)\}$;

$\mathfrak t:=\min\{|T|:T\subseteq [\w]^\w$ and $(\forall s,t\in T\;\;s\subseteq^* t$ or $t\subseteq^* s)$ and ($\forall s\in[\w]^\w\;\exists t\in T\;\;s\not\subseteq^* t)\}$.
%\end{itemize} 
\smallskip

The order relations between these cardinals are described by the following  diagram (see \cite{Blass}, \cite{vD}, \cite{Vau}), in which for two cardinals $\kappa,\lambda$ the symbol $\kappa\to\lambda$ indicates that $\kappa\le\lambda$ in ZFC.
$$
\xymatrix{
&\mathfrak s\ar[r]&\mathfrak d\ar[r]&\mathfrak c\\
\w_1\ar[r]&\mathfrak t\ar[r]\ar[u]&\mathfrak b\ar[u]\ar[r]&\mathfrak r\ar[u]
}
$$

Each family of sets $\I$ with $\bigcup\I\notin\I$ has four basic cardinal characteristics:
\begin{itemize}
\item $\add(\I)=\min\{|\A|:\A\subseteq\I\;\wedge\;\bigcup\A\notin\I\}$;
\item $\cov(\I)=\min\{|\C|:\C\subseteq\I\;\wedge\;\bigcup\C=\bigcup\I\}$;
\item $\non(\I)=\min\{|A|:A\subseteq\bigcup\I\;\wedge\;A\notin\I\}$;
\item $\cof(\I)=\min\{|\mathcal J|:\mathcal J\subseteq\I\;\wedge\;\forall I\in\I\;\exists J\in\mathcal J\;(I\subseteq J)\}$.
\end{itemize}
A family of sets $\I$ is called a {\em semi-ideal} if $\bigcup\I\notin\I$ and for any set $I\in\I$, each subset of $I$ belongs to $\I$. A family of sets $\I$ is called an {\em ideal} (resp. a {\em $\sigma$-ideal\/}) if $\I$ is a semi-ideal such that $\add(\I)\ge\w$ (resp. $\add(\I)\ge\w_1$). The following diagram describes the relation between the cardinal characteristics of any semi-ideal $\I$:
$$
\xymatrix{
\non(\I)\ar[r]&\cof(\I)\\
\add(\I)\ar[r]\ar[u]&\cov(\I).\ar[u]
}
$$
Let $\mathcal M$ be the $\sigma$-ideals of meager sets in the real line, and $\mathcal N$ be the $\sigma$-ideal of sets of Lebesgue measure zero in the real line. Also let $\K_\sigma$ be the smallest $\sigma$-ideal, containing all compact subsets of the space $\w^\w$. Here the space $\w^\w$ is endowed with the Tychonoff product topology.

It is known (and easy to see) that $\add(\K_\sigma)=\non(\K_\sigma)=\mathfrak b$ and $\cof(\K_\sigma)=\cov(\K_\sigma)=\mathfrak d$. The equalities $\cof(\M)=\max\{\mathfrak d,\non(\M)\}$ and $\add(\M)=\min\{\mathfrak b,\cov(\M)\}$ are less trivial and can be found in \cite[2.2.9 and 2.2.11]{BaJu}.

\subsection{The cardinal characteristics $\Adelta$ and $\Asigma$}
In this subsection we introduce and study two new cardinal characteristics of the continuum $\Adelta$ and $\Asigma$. Those cardinals are introduced with the help of the {\em permutation group} $S_\w$ of $\w$, i.e., the group of all bijections of $\w$ endowed with the operation of composition.

\begin{definition} Let
\begin{itemize}
\item[] $\Adelta:=\min\{|H|:H\subseteq S_\w\;\wedge\;\forall A\in[\w]^\w\;\exists h\in H\;\;\{a\in A:a\ne h(a)\in A\}\in[\w]^\w\}$;
\item[] $\Asigma:=\min\{|H|:H\subseteq S_\w\;\wedge\;\forall A,B\in[\w]^\w\;\exists h\in H\;\;h[A]\cap B\in[\w]^\w\}$.
\end{itemize}
\end{definition}
The symbols $\Adelta$ and $\Asigma$ are chosen in order to evoke the associations with in{\sf d}iscrete and in{\sf s}eparated coarse spaces. The cardinals $\Adelta$ and $\Asigma$ will be called the {\em discreteness} and {\em separation numbers}, respectively.

The following theorem locating the cardinals $\Adelta$ and $\Asigma$ among known cardinal characteristics of the continuum is the main result of this section.

\begin{theorem}\label{t:DS} $\max\{\mathfrak b,\mathfrak s,\cov(\mathcal N)\}\le\Adelta\le\Asigma\le\non(\mathcal M)$.
\end{theorem}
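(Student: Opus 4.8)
The statement bundles five inequalities: two structural upper bounds ($\Adelta\le\Asigma$ and $\Asigma\le\non(\M)$) and three lower bounds on $\Adelta$ (namely $\mathfrak b,\mathfrak s,\cov(\mathcal N)\le\Adelta$). Throughout I would work with the reformulation supplied by Theorem~\ref{t:alternative}: in the definitions of $\Adelta$ and $\Asigma$ the group $S_\w$ may be replaced by the set $I_\w$ of involutions with at most one fixed point. For $h\in I_\w$, writing $N_h(A)$ for the number of edges $\{x,h(x)\}$ with $x\ne h(x)$ and both endpoints in $A$, the set $\{a\in A:a\ne h(a)\in A\}$ has exactly $2N_h(A)$ elements; hence $\Adelta$ is the least size of a family $H\subseteq I_\w$ such that every $A\in[\w]^\w$ has $N_h(A)=\infty$ for some $h\in H$. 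Each lower bound is then proved contrapositively: from $H\subseteq I_\w$ of size below the target cardinal I produce a single $A\in[\w]^\w$ with $N_h(A)<\infty$ for every $h\in H$.

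For $\Adelta\le\Asigma$ I would take $H$ witnessing $\Asigma$ and, given $A$, split $A=A_0\cup A_1$ into two disjoint infinite halves and apply the defining property to the pair $(A_0,A_1)$: the resulting $h\in H$ with $h[A_0]\cap A_1$ infinite furnishes infinitely many $a\in A_0$ with $a\ne h(a)\in A$, so $H$ already witnesses $\Adelta$. For $\Asigma\le\non(\M)$ I would argue topologically: equip $S_\w$ with its Polish group topology inherited from $\w^\w$, for which the value $\non$ of the meager ideal equals $\non(\M)$ since $S_\w$ is a perfect Polish space. For fixed $A,B\in[\w]^\w$ the set $G_{A,B}=\{h\in S_\w:h[A]\cap B\text{ is infinite}\}$ is comeager, because each $\{h:|h[A]\cap B|\ge k\}$ is open (a finite partial condition) and dense (any finite partial injection extends to a permutation throwing $k$ fresh points of $A$ into $B$). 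A non-meager $H\subseteq S_\w$ of cardinality $\non(\M)$ meets every comeager set, in particular every $G_{A,B}$, hence witnesses $\Asigma$.

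The bounds $\mathfrak b\le\Adelta$ and $\mathfrak s\le\Adelta$ are purely combinatorial. For $\mathfrak b$, to each $h\in H$ (with $|H|<\mathfrak b$) I attach the displacement function $g_h(n)=\max_{i\le n}h(i)$; by $|H|<\mathfrak b$ these are dominated by a single increasing $g$, and I build $A=\{a_0<a_1<\cdots\}$ with $a_{i+1}>g(a_i)$. Then for each $h$, once $a_i$ is past the threshold where $g_h\le g$, the smaller endpoint $a_i$ of a putative $h$-edge $\{a_i,a_j\}$ with $i<j$ would satisfy $a_j=h(a_i)\le g(a_i)<a_{i+1}\le a_j$, a contradiction; so only finitely many $a_i$ start an edge and $N_h(A)<\infty$. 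For $\mathfrak s$, to each involution $h$ I attach a transversal $s_h\in[\w]^\w$ choosing exactly one endpoint from each edge; if $|H|<\mathfrak s$ the family $\{s_h:h\in H\}$ is not splitting, so some $X\in[\w]^\w$ satisfies, for each $h$, $X\subseteq^* s_h$ or $X\subseteq^*\w\setminus s_h$. In either case every $h$-edge inside $X$ has its ``other'' endpoint in the finite set $X\setminus s_h$ or $X\cap s_h$, whence $N_h(X)<\infty$ for all $h$ and $X$ defeats $H$.

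The subtlest inequality, which I expect to be the main obstacle, is $\cov(\mathcal N)\le\Adelta$; here I would randomize the defeating set. Let $\mu$ be the product measure on $2^\w$ placing $n$ into $A$ independently with probability $p_n=\tfrac1{n+1}$. Since $\sum_n p_n=\infty$, a $\mu$-random $A$ is infinite, and since $\mu$ is a continuous Borel probability measure the covering number of its null ideal equals $\cov(\mathcal N)$. The crucial point is a \emph{uniform} estimate valid for \emph{every} involution $h$:
$$\IE[N_h]=\sum_{\{x,y\}\in\mathrm{edges}(h)}p_xp_y\le\frac12\sum_{\{x,y\}\in\mathrm{edges}(h)}(p_x^2+p_y^2)\le\frac12\sum_{n\in\w}\frac1{(n+1)^2}<\infty,$$
using $p_xp_y\le\tfrac12(p_x^2+p_y^2)$ and the fact that the edges of $h$ partition a cofinite subset of $\w$. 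Consequently $\{A:N_h(A)=\infty\}$ is $\mu$-null for each $h$. If $|H|<\cov(\mathcal N)$, these $|H|$ null sets together with the null set of finite $A$ cannot cover $2^\w$, so there is an infinite $A$ with $N_h(A)<\infty$ for all $h\in H$, defeating $H$. The heart of the matter is selecting the inhomogeneous weights $p_n=\tfrac1{n+1}$: light enough to keep the expected edge count finite \emph{simultaneously} for all matchings, yet heavy enough ($\sum_n p_n=\infty$) to keep $A$ infinite.
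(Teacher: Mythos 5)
Your five component arguments are individually sound, and two of them genuinely diverge from the paper. For $\mathfrak s\le\Adelta$ you exploit the fact that an involution's edges form a matching: a transversal $s_h$ picking one endpoint per edge lets a \emph{single} unsplit set kill all $h\in H$. The paper, working with arbitrary permutations whose displacement graph only has degree $\le 2$, must instead $3$-color each graph and apply splitting \emph{twice} (first to $\{A_\alpha\}$, then to $\{I\cap B_\alpha\}$); your version is cleaner but is purchased by the involution hypothesis. For $\cov(\mathcal N)\le\Adelta$ your inhomogeneous product measure on $2^\w$ with weights $p_n=\tfrac1{n+1}$ is a genuinely different randomization from Lemma~\ref{l:Brian}, which runs the analogous first-moment estimate on the space $K=\prod_n K_n$ with $|K_n|=(n+1)!$ and takes $A$ to be the range of a random point of $K$; both rest on the same heuristic (expected edge count finite uniformly in $h$, while $A$ stays infinite almost surely), and yours is arguably more transparent. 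Your $\Adelta\le\Asigma$ (split into two infinite halves) and $\Asigma\le\non(\M)$ (Baire category in $S_\w\cong\w^\w$ via the dense open sets of permutations moving some far point of $A$ into $B$) coincide with the paper's proofs; note these two steps never need involutions, since the witnesses live in $S_\w$ anyway.

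The genuine gap is your opening move. You invoke Theorem~\ref{t:alternative} to replace $S_\w$ by $I_\w$ in the three lower bounds, but in this paper Theorem~\ref{t:alternative} is deduced from Theorem~\ref{t:main}(4),(5), whose proof (Claims~\ref{cl:4.15} and~\ref{cl:4.15s}) cites Theorem~\ref{t:DS} itself to know that $\Adelta(\IE_2^\circ)$ and $\Asigma(\IE_2^\circ)$ are uncountable. So, as written, your proof of Theorem~\ref{t:DS} is circular. Nor is the reduction cosmetic for you: your $\mathfrak b$-argument dominates only $h$ (harmless for involutions, where $h^{-1}=h$, but insufficient for a general permutation, where an edge $h(a)<a$ is controlled only through $h^{-1}$), and the transversal trick for $\mathfrak s$ has no analogue for non-involutive permutations. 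Two repairs are available. Either argue directly with permutations as the paper does — dominate $\max\{h,h^{-1}\}$ for $\mathfrak b$, and $3$-color the degree-$\le 2$ displacement graph for $\mathfrak s$ — or prove the nontrivial direction of the involution reduction from scratch, which is elementary and independent of Theorem~\ref{t:DS}: the edge set of any permutation's displacement graph splits into three matchings, each extendable to an almost free involution, and replacing each $h\in H$ by these three involutions preserves the hitting conditions (for the $\Asigma$-version, first shrink $A,B$ to disjoint infinite subsets, exactly as the paper does in Theorem~\ref{t:alternative}) while keeping $|H|$ unchanged, the relevant cardinals being uncountable by a routine diagonalization against countably many permutations. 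Finally, note that your $\cov(\mathcal N)$ estimate needs no repair at all: for an arbitrary permutation each integer lies in at most two edges $\{x,h(x)\}$, so the bound $\sum_{\mathrm{edges}}(p_x^2+p_y^2)\le 2\sum_n p_n^2<\infty$ survives with constant $2$, and that lower bound goes through for $S_\w$ verbatim.
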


\begin{proof} The inequalities from this theorem are proved in the following lemmas.

\begin{lemma} $\Adelta\le\Asigma$. 
\end{lemma}

\begin{proof} By the definition of $\Asigma$, there exists a subset $H\subseteq S_\w$ of cardinality $|H|=\Asigma$ such that for any sets $A,B\in[\w]^\w$ there exists a permutation $h\in H$ such that $h[A]\cap B$ is infinite.

Given any $C\in[\w]^\w$, choose two disjoint infinite sets $A,B\subseteq C$ and find $h\in H$ such that $h[A]\cap B$ is infinite. Then the set $\{x\in C:h(x)\ne x\}\subseteq h^{-1}[h[A]\cap B]$ is infinite, witnessing that $\Adelta\le|H|=\Asigma$. 
\end{proof}

\begin{lemma} $\Asigma\le\non(\M)$.
\end{lemma}

\begin{proof} Endow the group $S_\w$ with the topology of pointwise convergence, inherited from the topology of the Tychonoff product $\w^\w$ of countably many copies of the discrete space $\w$. It is well-known that $S_\w$ is a Polish group, which is homeomorphic to the space $\w^\w$. The definition of the cardinal $\non(\M)$ yields a non-meager set $M\subseteq S_\w$ of cardinality $|M|=\non(\M)$. 

Given any sets $A,B\in[\w]^\w$, observe that for every $n\in\w$, the set $$U_n=\{f\in S_\w:\exists a\in [n,+\infty)\cap A\;\;f(a)\in B\}$$ is open and dense in $S_\w$. Then the set $\bigcap_{n\in\w}U_n=\{f\in S_\w:f[A]\cap B\in[\w]^\w\}$ is dense $G_\delta$ in $S_\w$. Since the set $M$ is nonmeager in $S_\w$,  there exists a permutation 
$f\in M\cap \bigcap_{n\in\w}U_n$. For this permutation, the intersection $f[A]\cap B$ is infinite, which implies $\Asigma\le|M|=\non(\M)$.
\end{proof}

\begin{lemma}\label{l:b<Delta} $\mathfrak b\le\Adelta$. 
\end{lemma}

\begin{proof} Given any subset $H\subset S_\w$ of cardinality $|H|<\mathfrak b$. By the definition of the cardinal $\mathfrak b$, there exists an increasing function $g\in\w^\w$ such that $\max\{h,h^{-1}\}\le^*g$ for every $h\in H$. 
Choose a set $A\in[\w]^\w$ such that $g(x)<y$ for every numbers $x<y$ in $A$. We claim that for every $h\in H$ the set $F=\{x\in A:x\ne h(x)\in A\}$ is finite. Since  $\max\{h,h^{-1}\}\le^*g$, there exists $m\in\w$ such that $\max\{h(n),h^{-1}(n)\}\le g(n)$ for all $n\ge m$. Fix any element $n\in F$. If $n<h(n)$, then the choice of the set $A$ ensures that $g(n)<h(n)$ and hence $n<m$. If $h(n)<n$, then $g(h(n))<n=h^{-1}(h(n))$ and thus $h(n)<m$ and $n\le \max h^{-1}[m]$. In both cases we conclude that $n\le \max\{m,h^{-1}[m]\}$ and hence the set $F\subseteq m\cup h^{-1}[m]$ is finite.
\end{proof}

\begin{lemma} $\mathfrak s\le\mathfrak \Adelta$.
\end{lemma}

\begin{proof} By definition of $\Adelta$, there exists a set $\{h_\alpha\}_{\alpha\in\Adelta}\subseteq S_\w$ such that for every infinite set $A\subseteq\w$ there exists $\alpha\in\Adelta$ such that the set $\{x\in A:x\ne h_\alpha(x)\in A\}$ is infinite.
 
For every $\alpha\in\w$ let $E_\alpha=\{\{x,y\}\in[\w]^2:y\in\{h_\alpha(x),h_\alpha^{-1}(x)\}\}$. It is easy to see that $(\w,E_\alpha)$ is a graph of degree at most $2$ and chromatic number at most 3, see \cite[\S 5.2]{Diestel}. Consequently, we can find three pairwise disjoint sets $A_\alpha,B_\alpha,C_\alpha$ such that $\w=A_\alpha\cup B_\alpha\cup C_\alpha$ and $E_\alpha\cap([A_\alpha]^2\cup[B_\alpha]^2\cup[C_\alpha]^2)=\emptyset$.

 Assuming that $\Adelta<\mathfrak s$, we can find an infinite set $I\subseteq \w$, which is not split by the family $\{A_\alpha\}_{\alpha\in\Adelta}$. This means that for every $\alpha\in\Adelta$ either $I\subseteq^* A_\alpha$ or $I\subseteq^* \w\setminus A_\alpha$. Since $\Adelta<\mathfrak s$, we can find an infinite set $J\subseteq I$ which is not split by the family $\{I\cap B_\alpha\}_{\alpha\in\Adelta}$.
  This means that for every $\alpha\in\Adelta$ either $J\subseteq^* B_\alpha$ or $J\subseteq^* I\setminus B_\alpha$.
  
By the choice of the set $\{h_\alpha\}_{\alpha\in\Adelta}$, there exists an ordinal $\alpha\in\Adelta$ such that the set $\{x\in J:x\ne h_\alpha(x)\in J\}$ is infinite. By the choice of $I$, either $I\subseteq^* A_\alpha$ or $I\subseteq^*\w\setminus A_\alpha=B_\alpha\cup C_\alpha$. If $I\subseteq^* B_\alpha\cup C_\alpha$, then the choice of $J$, ensures that either $J\subseteq^* B_\alpha$ or $J\subseteq^* I\setminus B_\alpha\subseteq^*(B_\alpha\cup C_\alpha)\setminus B_\alpha=C_\alpha$. Therefore, for the set $J$ one of three cases holds: $J\subseteq^* A_\alpha$, $J\subseteq^* B_\alpha$ or $J\subseteq^* C_\alpha$. Then there exists $n\in\w$ such that the set $J\cap[n,\infty)$ is contained in  one of the sets: $A_\alpha$, $B_\alpha$ or $C_\alpha$. Since the set $\{x\in J:x\ne h(x)\in J\}$ is infinite, there exists a point $x\in J\cap[n,\infty)$ such that $x\ne h(x)\in J$. Then the pair $\{x,h(x)\}$ belongs to the intersection $E_\alpha\cap ([A_\alpha]^2\cup[B_\alpha]^2\cup[C_\alpha]^2)=\emptyset$.  This contradiction shows that $\mathfrak s\le\Adelta$.
\end{proof}

The proof of the following lemma was suggested by Will Brian\footnote{See, {\tt https://mathoverflow.net/a/353533/61536}}.

\begin{lemma}\label{l:Brian} $\cov(\mathcal N)\le\Adelta$.
\end{lemma}

\begin{proof} Given any set $H\subset S_\w$ of cardinality $|H|<\cov(\mathcal N)$, we shall find an infinite set $A\subset \w$ such that for every $h\in H$, the set $\{x\in A:x\ne h(x)\in A\}$ is finite. Write the set $\w$ as the union $\w=\bigcup_{n\in\w}K_n$ of pairwise disjoint sets of cardinality $K_n=(n+1)!$. On each set $K_n$ consider the uniformly distributed probability measure $\lambda_n=\frac1{n!}\sum_{x\in K_n}\delta_x$. Let $\lambda=\otimes_{n\in\w}\lambda_n$ be the tensor product of the measures $\lambda_n$. It follows that $\lambda$ is an atomless probability Borel measure on the compact metrizable space $K=\prod_{n\in\w}K_n$.
By \cite[17.41]{Ke}, the measure $\lambda$ is Borel-isomorphic to the Lebesgue measure on the unit interval $[0,1]$. Consequently the $\sigma$-ideal $\mathcal N_\lambda=\{A\subseteq K:\lambda(A)=0\}$ has covering number $\cov(\mathcal N_\lambda)=\cov(\mathcal N)$.

For every bijection $h\in S_\w$, let us evaluate the $\lambda$-measure of the set $Z_h$ consisting of all $x\in K$ such that the set $\{i\in\w:x(i)\ne h(x(i))\in x[\w]\}$ is infinite. Here by $x[\w]$ we denote the set $\{x(i):i\in\w\}$. Observe that $Z_h=\bigcap_{n\in\w}X_n$ where $$X_n=\big\{x\in K:\exists i,j\in [n,\infty)\;\;x(i)\ne h(x(i))=x(j)\big\}.$$
On the other hand, $X_n=\bigcup_{i,j=n}^\infty X_{i,j}$ where $$X_{i,j}=\{x\in K:x(i)\ne h(x(i))=x(j)\}.$$
For every $i\in\w$ we have $X_{i,i}=\emptyset$. On the other hand, for any distinct numbers $i,j\in\w$ we have $X_{i,j}=\bigcup_{p\in K_i}\{x\in K:x(i)=p,\;x(j)=h(p)\}$ and hence 
$$\lambda(X_{i,j})\le\sum_{p\in K_i\cap h^{-1}(K_j)}\frac1{|K_i|}\cdot\frac1{|K_j|}=\frac{|K_i\cap h^{-1}(K_j)|}{|K_i|\cdot|K_j|}\le \frac{\min\{|K_i|,|K_j|\}}{|K_i|\cdot|K_j|}=\frac1{\max\{|K_i|,|K_j|\}}.$$Then
$$
\begin{aligned}
\lambda(X_n)\le\;& \sum_{n\le i<j}\lambda(X_{i,j})+\sum_{n\le j<i}\lambda(X_{i,j})\le \sum_{n\le i<j}\frac1{|K_j|}+\sum_{n\le j<i}\frac1{|K_i|}=\\
&\sum_{n\le i<j}\frac2{|K_j|}=\sum_{n<j}\frac{2(j-n)}{|(j+1)!|}\le\sum_{j=n+1}^\infty\frac{2}{j!}
\end{aligned}
$$
and hence $$
\lambda(Z_h)=\lambda(\bigcap_{n\in\w}X_n)\le\lim_{n\to\infty}\lambda(X_n)\le\lim_{n\to\infty}\sum_{j=n+1}^\infty\frac2{j!}=0.
$$Since $|H|<\cov(\mathcal N)=\cov(\mathcal N_\lambda)$, the union $\bigcup_{h\in H}Z_h$ is not equal to $K$. So, we can choose a function $x\in K$ such that $x\notin \bigcup_{h\in H}Z_h$. For this function $x$, the set $A=\{x(i):i\in\w\}$ is an infinite subset of $\w$ such that for every $h\in H$ the set $\{a\in A:a\ne h(a)\in A\}$ is finite. This witnesses that $\cov(\mathcal N)\le\Adelta$.
\end{proof} 
\end{proof}
 \begin{problem} Are the strict inequalities $\Adelta<\Asigma<\non(\M)$ consistent? \end{problem}

%$$
%\xymatrix{
%&\cov(\mathcal N)\ar[rr]&&\non(\M)\ar[rr]&&\cof(\M)\ar[r]&\cof(\mathcal N)\ar[r]&\mathfrak c\\
%&&&\mathfrak b\ar[rr]\ar[u]&&\mathfrak d\ar[u]&&&&\\
%\w_1\ar[rd]\ar[rr]&&\mathfrak t\ar[rr]\ar[ru]&&\mathfrak s\ar[ur]\ar[uul]\ar[rr]&&\non(\mathcal N)\ar[uu]&\\
%&\add(\mathcal N)\ar[rr]\ar[uuu]&&\add(\M)\ar[rr]\ar[uu]&&\cov(\mathcal M)\ar[ru]\ar[uu]&
%}
%$$

%$$
%\xymatrix{
%&\mathfrak c\\
%&\mathfrak d\ar@{-}[u]\\
%\mathfrak b\ar@{-}[ur]&&\mathfrak s\ar@{-}[lu]\\
%&\mathfrak t\ar@{-}[ur]\ar@{-}[lu]\\
%&{\;}\w_1\ar@{-}[u]
%}
%$$

\section{The critical cardinalities related to indiscrete, inseparated or large coarse spaces}\label{s:critical}

In this section we shall calculate the critical cardinalities related to indiscrete, inseparated or large coarse structures on $\w$. Those critical cardinalities can be identified with the cardinal characteristics $\Adelta(\E)$, $\Asigma(\E)$, and $\Alambda(\E)$ of suitable large families $\E$ of entourages on $\w$.

Let $\E$ be a family of entourages on a set $\w$. The family $\E$ is called {\em large} if any infinite set $L\subseteq \w$ is {\em $\E$-large} in the sense that $E[L]=\w$ for some entourage $E\in\E$. Observe that $\E$ is large if $\IE^\circ_2\subseteq\E$. We recall that $\IE_2^\circ$ is the family of all cellular entourages $E$ on $\w$ such that $\sup_{x\in \w}|E(x)|\le 2$.

Given a large family of entourages $\E$ on $\w$, consider the following three cardinal characteristics of $\E$:
\begin{itemize}
\item $\Adelta(\E)=\min\{|\E'|:\E'\subseteq\E\;\wedge\;\forall D\in[\w]^\w\;\exists E\in\E'\quad\{x\in D:E(x)\ne \{x\}\}\in[\w]^\w\}$;
\item $\Asigma(\E)=\min\{|\E'|:\E'\subseteq\E\;\wedge\;\forall A,B\in[\w]^\w\;\exists E\in\E'\quad E[A]\cap E[B]\in[\w]^\w\}$;
\item $\Alambda(\E)=\min\{|\E'|:\E'\subseteq\E\;\wedge\;\forall L\in[\w]^\w\;\exists E\in\E'\quad E[L]=\w\}$.
\end{itemize}
We shall be interested in calculating the cardinals $\Adelta(\E)$, $\Asigma(\E)$, $\Alambda(\E)$ for the large families $\IE_\kappa$ and $\IE_\kappa^\circ$ where $\kappa$ is a cardinal in the interval $[2,\w_1]$.  Those cardinals are important because of the following evident characterizations.

\begin{proposition}\label{p4.1}\begin{enumerate}
\item The cardinal $\Adelta(\IE_{\w})$ \textup{\big(}resp. $\Delta(\IE_{\w_1})$\textup{\big)} is equal to the smallest weight of a finitary \textup{(}resp. locally finite\textup{)} indiscrete coarse structure on $\w$.
\item The cardinal $\Asigma(\IE_{\w})$ \textup{\big(}resp. $\Asigma(\IE_{\w_1})$\textup{\big)} is  equal to the smallest weight of a  finitary \textup{(}resp. locally finite\textup{)} inseparated coarse structure on $\w$.
\item The cardinal $\Alambda(\IE_{\w})$ \textup{\big(}resp. $\Alambda(\IE_{\w_1})$\textup{\big)} is  equal to the smallest weight of a finitary \textup{(}resp. locally finite\textup{)} large coarse structure on $\w$.
\end{enumerate}
\end{proposition}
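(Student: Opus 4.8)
The plan is to prove each of the three equalities by two inequalities, treating the finitary case ($\IE_\w$) and the locally finite case ($\IE_{\w_1}$) in parallel. Throughout I would use that a locally finite (in particular, finitary) coarse structure on $\w$ has bornology $[\w]^{<\w}$, so that ``$\E$-bounded'' means ``finite'' and ``$\E$-unbounded'' means ``infinite''; in particular $(\w,\E)$ is automatically unbounded. With this dictionary the defining conditions of the three classes unwind to: $(\w,\E)$ is indiscrete iff for every infinite $D\subseteq\w$ there is $E\in\E$ with $\{x\in D:D\cap E(x)\ne\{x\}\}$ infinite; inseparated iff for all infinite $A,B\subseteq\w$ there is $E\in\E$ with $E[A]\cap E[B]$ infinite; and large iff for every infinite $L\subseteq\w$ there is $E\in\E$ with $E[L]=\w$. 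These are exactly the sets appearing in the definitions of $\Adelta(\E)$, $\Asigma(\E)$ and $\Alambda(\E)$ with $\E'=\E$, so the content of the proposition is the passage between a coarse structure and a witnessing subfamily of $\IE_\kappa$.

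For the inequality ``characteristic $\le$ weight'' I would take a coarse structure $\E$ in the relevant class of smallest possible weight and fix a base $\mathcal B\subseteq\E$ with $|\mathcal B|=w(\E)$; since $\E\subseteq\IE_\w$ (resp. $\E\subseteq\IE_{\w_1}$) we have $\mathcal B\subseteq\IE_\w$ (resp. $\IE_{\w_1}$). The key point is that each of the three conditions is monotone in the entourage and can therefore be tested on the base: if $E\subseteq B$ then $\{x\in D:D\cap E(x)\ne\{x\}\}\subseteq\{x\in D:D\cap B(x)\ne\{x\}\}$, $E[A]\cap E[A']\subseteq B[A]\cap B[A']$, and $E[L]\subseteq B[L]$, while every $E\in\E$ lies below some $B\in\mathcal B$. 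Hence whenever the class condition is witnessed by some $E\in\E$ it is already witnessed by a member of $\mathcal B$, so $\mathcal B$ is an admissible family in the definition of $\Adelta(\IE_\kappa)$, $\Asigma(\IE_\kappa)$ or $\Alambda(\IE_\kappa)$. This yields the characteristic $\le|\mathcal B|=w(\E)$.

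For the reverse inequality ``weight $\le$ characteristic'' I would start from an optimal witnessing family $\E'\subseteq\IE_\kappa$ (of cardinality equal to the characteristic) and let $\E={\downarrow}\mathcal B$ be the coarse structure generated by the ball structure $\mathcal B$ obtained by closing $\E'$ together with a fixed countable base of the trivial entourages under $E\mapsto E^{-1}$, $(E,F)\mapsto EF$ and finite unions. Since $\E'\subseteq\E$, the witnessing property of $\E'$ immediately delivers the class membership of $(\w,\E)$: every infinite $D$ (resp. pair $A,B$; resp. infinite $L$) is handled by some $E\in\E'\subseteq\E$, so $(\w,\E)$ is indiscrete (resp. inseparated, large). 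The genuine work, and the step I expect to be the main obstacle, is the bookkeeping showing that $\E$ still lies in $\IE_\w$ (resp. $\IE_{\w_1}$) and that its weight does not exceed $|\E'|$. For finitariness one checks that $\IE_\w$ is closed under inverses, finite unions and compositions (using $|EF(x)|\le|F(x)|\cdot\sup_z|E(z)|$) and that the trivial entourages are finitary with a countable base; the analogous closure holds for $\IE_{\w_1}$. The trivial entourages are needed precisely to guarantee $\bigcup\E=\w\times\w$, so that $\E$ is a genuine coarse structure and not merely a ball structure on a connected part of $\w$. The generated base $\mathcal B$ consists of the finite unions of finite products of members of $\E'\cup(\E')^{-1}$ and of the countable trivial base, so $|\mathcal B|\le\max\{\w,|\E'|\}$; since $(\w,\E)$ is indiscrete it has uncountable weight by Proposition~\ref{p:uw}, whence $|\E'|\ge\w_1$ and $|\mathcal B|\le|\E'|$. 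Thus $w(\E)\le|\E'|$, as required. If one prefers, Lemma~\ref{l:decompose} lets one first replace $\E'$ by an equinumerous subfamily of $\IE_2^\circ$, which makes the finitariness bookkeeping for the finitary case entirely transparent.
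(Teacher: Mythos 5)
Your proof is correct and is essentially the argument the paper has in mind: Proposition~\ref{p4.1} is stated there without any proof, as an ``evident characterization,'' and your two inequalities --- testing the monotone class conditions on a base of cardinality $w(\E)$ in one direction, and in the other generating a finitary (resp.\ locally finite) coarse structure from an optimal witnessing family together with the trivial entourages, with Proposition~\ref{p:uw} and the implication chain large $\Ra$ inseparated $\Ra$ indiscrete absorbing the countable closure so that $w(\E)\le|\E'|$ --- constitute exactly the routine verification the author leaves to the reader. Note only that you have tacitly (and rightly) corrected a typo in the paper's displayed definition of $\Adelta(\E)$, where $\{x\in D:E(x)\ne\{x\}\}$ should read $\{x\in D:D\cap E(x)\ne\{x\}\}$, since with the literal reading a single fixed-point-free involution would witness $\Adelta(\IE_\w)=1$; the paper's own later usage (e.g.\ in Lemmas~\ref{l:DS=b} and \ref{l:ADelta}) confirms the intended form.
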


It turns out that among cardinals $\Adelta(\IE_\kappa)$, $\Adelta(\IE_\kappa^\circ)$,  $\Asigma(\IE_\kappa)$, $\Asigma(\IE_\kappa^\circ)$,  $\Alambda(\IE_\kappa)$, $\Alambda(\IE_\kappa^\circ)$,  $\kappa\in[2,\w_1]$, there are at most 5 distinct cardinals: $\mathfrak b,\mathfrak d,\mathfrak c,\Adelta,\Asigma$.

\begin{theorem}\label{t:main}
\begin{enumerate}
\item $\Adelta(\IE_{\w_1})=\Adelta(\IE_{\w_1}^\circ)=\Asigma(\IE_{\w_1})=\Asigma(\IE_{\w_1}^\circ)=\mathfrak b$.
\item $\Alambda(\IE_{\w_1})=\Alambda(\IE_{\w_1}^\circ)=\mathfrak d$.
\item $\Alambda(\IE_\kappa)=\Alambda(\IE_\kappa^\circ)=\mathfrak c$ for every cardinal $\kappa\in[2,\w]$.
\item $\Adelta(\IE_\kappa)=\Adelta(\IE_\kappa^\circ)=\Adelta$ for any cardinal $\kappa\in[2,\w]$.
\item $\Asigma(\IE_\kappa)=\Asigma(\IE_\kappa^\circ)=\Asigma$ for any  cardinal $\kappa\in[2,\w]$.
\end{enumerate}
\end{theorem}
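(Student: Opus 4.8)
The plan is to treat all five statements uniformly, using the inclusions recorded in diagram~\eqref{diag:E[X]} together with the obvious monotonicity principle: if $\E\subseteq\F$ are large families of entourages, then any subfamily witnessing one of the three conditions for $\E$ also witnesses it for $\F$, so $\Adelta(\F)\le\Adelta(\E)$, $\Asigma(\F)\le\Asigma(\E)$ and $\Alambda(\F)\le\Alambda(\E)$. Since for $\kappa\in[2,\w]$ we have $\IE_2^\circ\subseteq\IE_\kappa^\circ\subseteq\IE_\kappa\subseteq\IE_\w$, and $\IE_{\w_1}^\circ\subseteq\IE_{\w_1}$, each asserted chain of equalities becomes a sandwich: it suffices to bound the characteristic of the largest family from below and that of the smallest family from above by the target cardinal. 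I will also use $\Adelta(\F)\le\Asigma(\F)$, valid whenever $\F$ is closed under composition or consists of cellular (hence idempotent) entourages: given an $\Asigma(\F)$-witness and an infinite $D$, split $D=A\sqcup B$, pick $E$ with $E[A]\cap E[B]$ infinite, and observe that each point of this intersection links a point of $A$ to a point of $B$ through $E^2\in\F$, so $D$ is not $E^2$-discrete.

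For (4) and (5) I would identify $\Adelta(\IE_\kappa),\Asigma(\IE_\kappa)$ with the combinatorial cardinals $\Adelta,\Asigma$. For the upper bounds $\Adelta(\IE_2^\circ)\le\Adelta$ and $\Asigma(\IE_2^\circ)\le\Asigma$, take a witnessing $H\subseteq I_\w$ of minimal size (Theorem~\ref{t:alternative}) and attach to each involution $h$ the cellular entourage $E_h=\Delta_\w\cup\{(x,h(x)):x\ne h(x)\}\in\IE_2^\circ$; a direct check gives $\{x\in D:D\cap E_h(x)\ne\{x\}\}=\{a\in D:a\ne h(a)\in D\}$ and $E_h[A]\cap E_h[B]\supseteq h[A]\cap B$, so $\{E_h:h\in H\}$ witnesses the respective condition. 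For the lower bounds $\Adelta\le\Adelta(\IE_\w)$ and $\Asigma\le\Asigma(\IE_\w)$, take a minimal witnessing $\E'\subseteq\IE_\w$ and decompose each $E\in\E'$ by Lemma~\ref{l:decompose} as $E\subseteq\bigcup_{i\le n_E}E_{h_i^E}$ with involutions $h_i^E$. Then $H=\{h_i^E\}$ (resp. $H'=\{h_i^E h_j^E\}$) has cardinality $\le|\E'|$ (the weight is uncountable by Proposition~\ref{p:uw}, so the countable blow-up is harmless) and, by a finite pigeonhole on $i$ (resp. on the word $h_ih_j$ forced by $E^{-1}E$), witnesses the combinatorial $\Adelta$ (resp. $\Asigma$). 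The sandwich collapses the whole chain to $\Adelta$ (resp. $\Asigma$).

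For (1) and (2) I would pass to the locally finite world and argue by domination. For $\mathfrak b\le\Adelta(\IE_{\w_1})$: given $\E'\subseteq\IE_{\w_1}$ of size $<\mathfrak b$, the reach functions $x\mapsto\max E^\pm(x)$ are $\le^*$-dominated by a single $g$, and a set $D$ whose consecutive gaps outgrow $g$ is $E$-discrete for every $E\in\E'$, so $\E'$ fails; hence every locally finite indiscrete structure has weight $\ge\mathfrak b$. For $\Asigma(\IE_{\w_1}^\circ)\le\mathfrak b$ I would realise a cellular locally finite inseparated structure of weight $\mathfrak b$ from the interval partitions associated with an unbounded family in $(\w^\w,\le^*)$; the routine but slightly delicate point is that for any two infinite $A,B$ some partition has infinitely many blocks meeting both, so no two infinite sets are separated. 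Part (2) is the same scheme with domination replacing unboundedness: a dominating family of interval partitions eventually makes every block meet a given infinite $L$, hence spreads $L$ onto $\w$, giving $\Alambda(\IE_{\w_1}^\circ)\le\mathfrak d$, while a family of size $<\mathfrak d$ fails to dominate the gap function of some sparse $L$, which is then not $\E'$-large, giving $\Alambda(\IE_{\w_1})\ge\mathfrak d$.

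The genuine difficulty is the lower bound in (3), i.e. $\mathfrak c\le\Alambda(\IE_\w)$ (the upper bound is immediate since $|\IE_\w|=\mathfrak c$ and $\IE_\w$ is large by Example~\ref{ex:ESX}). The convenient reformulation is that $E[L]=\w$ iff $L$ meets every window $E^{-1}(n)$; so, given $\E'\subseteq\IE_\w$ with $|\E'|<\mathfrak c$, I must produce an infinite $L$ whose complement contains a full window $E^{-1}(n_E)$ for each $E\in\E'$. Finitarity is essential here: windows have uniformly bounded size, so each window system covers $\w$ with bounded multiplicity, and the jump from the locally finite value $\mathfrak d$ to $\mathfrak c$ reflects exactly this boundedness. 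The obstacle is that soft arguments only reach covering numbers: for a fixed $E$ the set $\{L:E[L]=\w\}$ is both Lebesgue-null and meager (using infinitely many pairwise disjoint windows), which yields merely $\max\{\cov(\mathcal N),\cov(\M)\}\le\Alambda(\IE_\w)$, and a naive transfinite recursion stalls because maintaining an infinite reservoir disjoint from the chosen windows is itself equivalent to the original problem. The main work is therefore a dedicated ZFC construction — either a family of $\mathfrak c$ test sets each of which is a transversal of only countably many window systems, forcing $|\E'|\ge\mathfrak c$ by counting, or an explicit length-$\mathfrak c$ diagonalization exploiting the bounded multiplicity — and this is the step I expect to be hardest.
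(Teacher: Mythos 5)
You leave the decisive step of part (3) unproved: you correctly isolate the lower bound $\mathfrak c\le\Alambda(\IE_\w)$ as the crux, observe that soft category/measure arguments only reach $\max\{\cov(\mathcal N),\cov(\M)\}$, and then stop, calling the needed ZFC construction "the step I expect to be hardest." The missing idea is in fact short and is exactly the counting scheme you conjectured, instantiated with an almost disjoint family. Take $\{A_\alpha\}_{\alpha\in\mathfrak c}\subseteq[\w]^\w$ with $A_\alpha\cap A_\beta$ finite for $\alpha\ne\beta$ (ZFC, \cite[8.1]{Blass}). If $\E'\subseteq\IE_\w$ with $|\E'|<\mathfrak c$ witnessed largeness, choose $E_\alpha\in\E'$ with $E_\alpha[A_\alpha]=\w$; by pigeonhole some single $E\in\E'$ satisfies $E=E_\alpha$ for infinitely many $\alpha$. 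Put $n=\sup_{x\in\w}|E^{-1}(x)|<\w$ (finitarity), pick $n+1$ such ordinals, a finite set $F$ containing all pairwise intersections of the corresponding $A_\alpha$'s, and any $x\notin E[F]$. Since $E[A_\alpha]=\w$, the window $E^{-1}(x)$ meets each of the $n+1$ pairwise disjoint sets $A_\alpha\setminus F$, contradicting $|E^{-1}(x)|\le n$. So each entourage can serve only finitely many test sets, forcing $|\E'|=\mathfrak c$; no MAD family, transversal construction, or transfinite diagonalization is needed, and your worry about "maintaining a reservoir" evaporates because almost disjointness is arranged in advance rather than recursively.

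Two further points. First, in (4) and (5) your upper bounds $\Adelta(\IE_2^\circ)\le\Adelta$ and $\Asigma(\IE_2^\circ)\le\Asigma$ invoke Theorem~\ref{t:alternative} to obtain a witness $H\subseteq I_\w$, but in the paper that theorem is \emph{deduced from} Theorem~\ref{t:main}(4),(5), so as written this is circular. The repair uses tools you already deploy: an $S_\w$-witness $h$ gives $E_h\in\IE_2$ (not cellular unless $h$ is an involution), which by Lemma~\ref{l:decompose} splits into finitely many members of $\IE^\circ_2$, and a pigeonhole transfers the witnessing property — this is how the paper argues, proving $\Adelta(\IE_\w)\le\Adelta$ with the noncellular $E_h$ and separately $\Adelta(\IE^\circ_2)\le\Adelta(\IE_\w)$ by decomposition (and likewise for $\Asigma$). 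Second, two smaller loose ends: your word set $\{h_i^Eh_j^E\}$ in the $\Asigma$ lower bound must also contain the single letters $h_i^E$ (note $h_ih_i=\mathrm{id}$), and one should first shrink $A,B$ to disjoint infinite subsets; and in (1) a single interval partition per unbounded $f$ does not suffice, since an interval $[x,g(x))$ may straddle a block boundary — the paper needs the two partitions $E^0_f,E^1_f$ built from the even and odd iterates $f^n(0)$. Apart from these repairs, your route for (1), (2), (4), (5) — monotone sandwiching, domination/unboundedness via the iterates $f^n(0)$, and decomposition plus pigeonhole — coincides with the paper's proof.
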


\begin{proof} We divide the proof into five lemmas.

\begin{lemma}\label{l:DS=b} $\Adelta(\IE_{\w_1})=\Adelta(\IE_{\w_1}^\circ)=\Asigma(\IE_{\w_1})=\Asigma(\IE_{\w_1}^\circ)=\mathfrak b$.
\end{lemma}  

\begin{proof} Taking into account the inclusion $\IE_{\w_1}^\circ\subseteq \IE_{\w_1}$ and the inequalities $\Adelta(\E)\le\Asigma(\E)$ holding for every large family $\E$  of entourages on $\w$, we get the following diagram (in which an arrow $\alpha\to\beta$ between two cardinals $\alpha,\beta$ indicates that $\alpha\le\beta$).
$$
\xymatrix{
&\Adelta(\IE_{\w_1}^\circ)\ar[r]&\Asigma(\IE_{\w_1}^\circ)\\
&\Adelta(\IE_{\w_1})\ar[r]\ar[u]&\Asigma(\IE_{\w_1})\ar[u]\\
}
$$
Therefore, Lemma~\ref{l:DS=b} will be proved as soon as we check that $\mathfrak b\le \Adelta(\IE_{\w_1})$ and $\Asigma(\IE^\circ_{\w_1})\le\mathfrak b$.
This will be done in the following two claims.

\begin{claim} $\mathfrak b\le \Adelta(\IE_{\w_1})$.
\end{claim}

\begin{proof} We shall prove that for any subfamily $\E'\subseteq\IE_{\w_1}$ of cardinality $|\E'|<\mathfrak b$ there exists an infinite $\E'$-discrete set $D\subseteq \w$. For every $E\in\E'$ consider the function $\varphi_E\in\w^\w$ defined by $\varphi_E(n)=\max \big(E(n)\cup E^{-1}(n)\big)$ for $n\in\w$. Since $|\{\varphi_E:E\in\E'\}|\le|\E'|<\mathfrak b$, there exists a function $g\in\w^\w$ such that $\varphi_E\le^* g$ for all $E\in\E$. Choose an infinite set $D\subseteq\w$  such that for any $x<y$ in $D$ we have $g(x)<y$. We claim that for every $E\in\E'$ the set $\{x\in D:D\cap E(x)\ne \{x\}\}$ is finite. Since $\varphi_E\le^* g$, there exists $n\in\w$ such that $\varphi_E(x)\le g(x)$ for all $x\in[n,\infty)$. Take any element $x\in D$ with $D\cap E(x)\ne\{x\}$ and choose $y\in D\cap E(x)\setminus\{x\}$. If $x<y$, then $g(x)<y\le\max E(x)\le \varphi_E(x)$ and thus $x\in n\subseteq E^{-1}[n]$. If $y<x$, then 
$g(y)<x\le \max E^{-1}(y)\le \varphi_E(y)$ and thus $y<n$ and then $x\in E^{-1}(y)\subseteq E^{-1}[n]$. Therefore, the set $\{x\in D:D\cap E(x)\ne\{x\}\}\subseteq E^{-1}[n]$ is finite and the set $D$ is $\E'$-discrete.
\end{proof}

\begin{claim} $\Asigma(\IE^\circ_{\w_1})\le\mathfrak b$.
\end{claim}

\begin{proof} The upper bound $\Asigma(\IE^\circ_{\w_1})\le\mathfrak b$ will follow as soon as we check that for every $B\subseteq \w^\w$ of cardinality $|B|<\Asigma(\IE^\circ_{\w_1})$ there exists a function $g\in\w^\w$ such that $f\le^* g$ for every $f\in B$. Replacing each $f\in B$ by a larger strictly increasing function, we can assume that $f$ is strictly increasing and $f(0)>0$. Then for every $n\in\w$ the $n$-th iteration $f^n$ of $f$ is strictly increasing and so is the sequence $(f^n(0))_{n\in\w}$. The latter sequence determines  cellular locally finite entourages
$$E^0_f=\bigcup_{n\in\w}[f^{2n}(0),f^{2n+2}(0))^2\mbox{ \ and \ }
E^1_f=\bigcup_{n\in\w}[f^{2n+1}(0),f^{2n+3}(0))^2
$$on $\w$. Since $|B|<\Asigma(\IE^\circ_{\w_1})$, there exist two infinite sets $I,J\subseteq\w$ such that $E^k_f[I]\cap E^k_f[J]$ is finite for every $f\in B$ and $k\in\{0,1\}$.

Choose an increasing function $g:\w\to\w$ such that for any $x\in \w$ the interval $[x,g(x))$ has nonempty intersection with the sets  $I$ and  $J$. We claim that $f\le^* g$ for any function $f\in B$. This will follow as soon we check that $\{x\in\w: f(x)>g(x)\}\subseteq \bigcup_{k=0}^1E^k_f[I]\cap E^k_f[J])$. In the opposite case, there exists a number $x\in\w$ such that $f(x)>g(x)$ and $x\notin  \bigcup_{k=0}^1E^k_f[I]\cap E^k_f[J]$. 
By the choice of $g$, the interval $[x,g(x))$ contains some numbers $i\in I$ and $j\in J$.

 Find a unique number $n\in\w$ such that $f^n(0)\le g(x)<f^{n+1}(0)$. If $f^n(0)\le x$, then $x\in E^0_f[i]=E^0_f[j]\subseteq E^0_f[I]\cap E^0_f[J]$, which contradicts the choice of $x$. So $x<f^n(0)\le g(x)<f(x)$ and hence $f^{n-1}(0)\le x$. Then $i,j\in [x,g(x))\subseteq [f^{n-1}(0),f^{n+1}(0))$. Write the number $n-1$ as $n-1=2p+k$ for some $p\in\w$ and $k\in\{0,1\}$. Observe that $x\in E^k_f[i]=E^k_f[j]\subseteq E^k_f[I]\cap E^k_f[J]$, which contradicts the choice of $x$. This contradiction shows that $f\le^* g$ for every $f\in B$.
 \end{proof}
 \end{proof}
 
 \begin{lemma} $\Alambda(\IE_{\w_1})=\Alambda(\IE_{\w_1}^\circ)=\mathfrak d$.
 \end{lemma}
 
 \begin{proof} The inclusion $\IE_{\w_1}^\circ\subset \IE_{\w_1}$ implies $\Alambda(\IE_{\w_1})\le  \Alambda(\IE_{\w_1}^\circ)$. Now we see that it suffices to prove that  $\mathfrak d\le \Alambda(\IE_{\w_1})$ and  $\Alambda(\IE^\circ_{\w_1})\le \mathfrak d$, which is done in Claims~\ref{cl:4.7} and \ref{cl:4.9}.
 
\begin{claim}\label{cl:4.7} $\mathfrak d\le \Alambda(\IE_{\w_1})$.
\end{claim}

\begin{proof} The inequality  $\mathfrak d\le \Alambda(\IE_{\w_1})$ will follow as soon as we show that for each subfamily $\E'\subseteq\IE_{\w_1}$ of cardinality $|\E'|<\mathfrak d$ there exists an infinite subset $I\subseteq \w$ which is not $\E'$-large. Fix a subfamily $\E'\subseteq\IE_{\w_1}$ with $|\E'|<\mathfrak d$.
Consider the entourage $$D=\{(x,y)\in\w\times\w:|x-y|\le 2\}.$$ For every $E\in\E'$ choose a strictly increasing function $\varphi_E\in\w^\w$ such that $\varphi_E(0)>0$ and $\varphi_E(x)>\max E^{-1}DE(x)$ for all $x\in \w$. Then for every $k\in\w$ the $k$th iteration $\varphi^k_E$ of $\varphi_E$ is strictly increasing, too.

Since $\mathfrak d>|\E'|$, there exists an increasing  function $g\in\w^\w$ such that $g\not\le \varphi^3_E$ for every $E\in\E'$. Choose an infinite set $I\subseteq \w$ such that for any numbers $i<j$ in $I$ we have $g(i)<j$. We claim that the set $I$ is not $\E'$-large. Assuming the opposite, find an entourage $E\in\E'$ such that $E[I]=\w$. 

\begin{claim}\label{cl:d} For every $k\in\w$ the set $I$ intersects the interval $[\varphi_E^k(0),\varphi^{k+1}_E(0))$. 
\end{claim}

\begin{proof} Assume that for some $k\in\w$, the intersection $I\cap [\varphi_E^k(0),\varphi^{k+1}_E(0))$ is empty and hence $I\subseteq A\cup B$ where $A=[0,\varphi_E^k(0))$ and $B=[\varphi_E^{k+1}(0),\infty)$. Observe that for any numbers $a\in A$ and $b\in B$ we have $\max E^{-1}DE(a)<\varphi_E(a)<\varphi_E(\varphi^k_E(0))=\varphi^{k+1}_E(0)\le b$, which implies that $DE(a)\cap E(b)=\emptyset$ and hence $|x-y|>2$ for any $x\in E[A]$ and $y\in E[B]$. Then $E[A]\cup E[B]$ cannot be equal to $\w$. On the other hand, $\w=E[I]\subseteq E[A\cup B]=E[A]\cup E[B]\ne\w$. This contradiction completes the proof.
\end{proof}

Since $g\not\le\varphi^3_E$, there exists a positive integer number $n\in \w$ such that $\varphi_E^3(n)<g(n)$. Find a unique number $k\in\w$ such that $\varphi^k_E(0)\le n<\varphi^{k+1}_E(0)$. By Claim~\ref{cl:d}, there exist numbers $i,j\in I$ such that $\varphi^{k+1}_E(0)\le i<\varphi^{k+2}_E(0)\le j<\varphi_E^{k+3}(0)$. Taking into account that the functins $g$ and $\varphi_E$ are strictly increasing, we conclude that $$j<\varphi^{k+3}_E(0)=\varphi^3_E(\varphi_E^k(0))\le \varphi^3_E(n)<g(n)\le g(i)$$which contradicts the choice of the set $I$.
\end{proof}

\begin{claim}\label{cl:4.9} $\Alambda(\IE_{\w_1}^\circ)\le\mathfrak d$.
\end{claim}

\begin{proof} By definition of $\mathfrak d$, there exists a set $D\subseteq \w^\w$ of cardinality $|D|=\mathfrak d$ such that for every $g\in\w^\w$ there exists a function $f\in D$ such that $g\le f$. We lose no generality assuming that each function $f\in D$ is strictly increasing and  $f(0)>0$. In this case the $n$-th iteration $f^n$ of $f$ is strictly increasing and so is the sequence $(f^n(0))_{n\in\w}$. Then  we can consider the cellular locally finite entourage
$$E_f=\bigcup_{n\in\w}[f^{n}(0),f^{n+1}(0))^2$$
on $\w$. We claim that for the family $\E'=\{E_f\}_{f\in D}\subseteq\IE_{\w_1}^\circ$, every infinite set $I\subseteq\w$ is $\E'$-large.

Given any infinite set $I\subseteq \w$, choose a strictly increasing function $g\in\w^\w$ such that $g(0)=0$ and for every $n\in\w$ the interval $[n,g(n))$ has non-empty intersection with the set $I$. By the choice of $D$, there exists a function $f\in D$ such that $g\le f$.  We claim that $E_f[I]=\w$. To prove the latter equality, it suffices to check that for every $n\in\w$ the interval $[f^{n}(0),f^{n+1}(0))$ meets the set $I$. Observe that $f^{n+1}(0)=f(f^n(0))\ge g(f^n(0))$ and hence 
$$I\cap[f^{n}(0),f^{n+1}(0))\supseteq I\cap [f^n(0),g(f^n(0)))\ne\emptyset.$$ 
\end{proof}
\end{proof}

\begin{lemma}\label{l:L=c} $\Alambda(\IE_\kappa)=\Alambda(\IE_\kappa^\circ)=\mathfrak c$ for every cardinal $\kappa\in[2,\w]$.
\end{lemma}

\begin{proof} The trivial inclusions $\IE_{\kappa}^\circ\subset \IE_\kappa\subseteq\IE_\w$ holding for any cardinal $\kappa\in[2,\w]$ imply the trivial inequalities
$$\Alambda(\IE_\w)\le \Alambda(\IE_\kappa)\le \Alambda(\IE_\kappa^\circ)\le|\IE_\kappa^\circ|=\mathfrak c.$$
Therefore, Lemma~\ref{l:L=c} will be proved as soon as we check that $\mathfrak c\le \Alambda(\IE_{\w})$.

To derive a contradiction, assume that $\Alambda(\IE_{\w})<\mathfrak c$ and choose a family $\E'\subset \IE_{\w}$ of cardinality $|\E'|<\mathfrak c$ such that every infinite subset of $\w$ is $\E'$-large. By \cite[8.1]{Blass}, there exists a family $\{A_\alpha\}_{\alpha\in\mathfrak c}\subset [\w]^\w$ such that $A_\alpha\cap A_\beta$ is finite for any ordinals $\alpha<\beta<\mathfrak c$. By our assumption, for every $\alpha\in\mathfrak c$ there exists an entourage $E_\alpha\in\E'$ such that $E_\alpha[A_\alpha]=\w$. By the Pigeonhole Principle, there exists an entourage $E\in\E'$ such that the set $\Omega=\{\alpha\in\mathfrak c:E_\alpha=E\}$ is infinite. Since $E$ is finitary, the cardinal $n=\sup_{x\in \w}|E^{-1}(x)|$ is finite. Choose a subset $\Omega'\subset\Omega$ of cardinality $|\Omega'|=n+1$, and find a finite set $F\subset\w$ such that $A_\alpha\cap A_\beta\subseteq F$ for any distinct ordinals $\alpha,\beta\in\Omega'$. Take any number $x\in\w\setminus E[F]$   and observe that for every $\alpha\in\Omega'$, the inclusion $x\in E_\alpha[A_\alpha]=E[A_\alpha]$ implies that  the intersection $E^{-1}(x)\cap A_\alpha$ is not empty. Then $(E^{-1}(x)\cap A_\alpha)_{\alpha\in\Omega'}$ is a disjoint family consisting of $n+1$ nonempty subsets in the set $E^{-1}(x)$ of cardinality $\le n$, which is not possible.
\end{proof}

\begin{lemma}\label{l:ADelta}  $\Adelta(\IE_\kappa)=\Adelta(\IE_\kappa^\circ)=\Adelta$ for any cardinal $\kappa\in[2,\w]$.
\end{lemma}

\begin{proof} The inclusions $\IE^\circ_2\subseteq\IE_\kappa^\circ\subseteq\IE_\kappa\subseteq\IE_\w$ imply the inequalities
$$\Adelta(\IE_\w)\le\Adelta(\IE_\kappa)\le\Adelta(\IE^\circ_\kappa)\le\Adelta(\IE^\circ_2).$$ It remains to prove that $\Adelta(\IE_2^\circ)\le\Adelta(\IE_\w)$ and $\Adelta(\IE_\w)\le\Adelta\le\Adelta(\IE_2^\circ)$, which is done in the next three claims.

\begin{claim}\label{cl:Delta} $\Adelta(\IE_2^\circ)\ge\Adelta$.
\end{claim}

\begin{proof} The inequality $\Adelta(\IE_2^\circ)\ge\Adelta$ will follow as soon as we show that for every family $\E\subseteq\IE_2^\circ$ of cardinality $|\E|<\Adelta$, there exists an $\E$-discrete subset $I\in[\w]^\w$. For every (cellular) entourage $E\in\E\subseteq\IE^\circ_2$, consider the partition $P_E=\{E(x):x\in \w\}$ and find an involution $h_E\in S_\w$ such that $P_E=\{\{x,h_E(x)\}:x\in\w\}$. Then $H=\{h_E\}_{E\in\E}$ is a subset of the permutation group $S_\w$ such that $|H|\le|\E|<\Adelta$. By the definition of the cardinal $\Adelta$, there exists an infinite set $I\subseteq\w$ such that for every $E\in\E$ the set $\{x\in I:x\ne h_E(x)\in I\}=\{x\in I:E(x)\cap I\ne\{x\}\}$ is finite, which means that the set $I$ is $\E$-discrete.
\end{proof}

\begin{claim} $\Adelta(\IE_\w)\le\Adelta$
\end{claim}

\begin{proof} For every $h\in S_\w$ consider the entourage $E_h=\{(x,y)\in\w\times\w:y\in\{x,h(x)\}\}$. By the definition of $\Adelta$, there exists a set $H\subseteq S_\w$ of cardinality $|H|=\Adelta$ such that for every $A\in[\w]^\w$ there exists $h\in H$ such that the set $\{x\in A:x\ne h(x)\in A\}=\{x\in A:E_h(x)\cap A\ne\{x\}\}$ is infinite. Then $\Adelta(\IE_\w)\le|\{E_h\}_{h\in H}|\le|H|=\Adelta$.
\end{proof}

\begin{claim}\label{cl:4.15} $\Adelta(\IE^\circ_2)\le\Adelta(\IE_\w)$.
\end{claim}

\begin{proof} The inequality $\Adelta(\IE_2^\circ)\le\Adelta(\IE_\w)$ will follow as soon as we prove that for every family $\E\subseteq\IE_\w$ of cardinality $|\E|<\Adelta(\IE^\circ_2)$ there exists an infinite $\E$-discrete set $I\subseteq\w$. By Claim~\ref{cl:Delta} and Theorem~\ref{t:DS}, the cardinal $\Adelta(\IE^\circ_2)$ is uncountable.  By Lemma~\ref{l:decompose}, for every entourage $E\in\E$ there exists a finite family $\F_E\subseteq\IE^\circ_2$ such that $E\subseteq\bigcup\F_E$. Since the family $\F=\bigcup_{E\in\E}\F_E\subseteq\IE^\circ_2$ has cardinality $|\F|\le \max\{\w,|\E|\}<\Adelta(\IE_2^\circ)$, there exists an $\F$-discrete set $I\in[\w]^\w$. By the $\F$-discreteness of $I$ for every entourage $E\in\E$ the set
$$\{x\in I:E(x)\cap I\ne\{x\}\}\subseteq \bigcup_{F\in\F_E}\{x\in I:F(x)\cap I\ne\{x\}\}$$is finite, which means that the set $I$ is $\E$-discrete.
\end{proof}
\end{proof}

\begin{lemma}\label{l:ASigma}  $\Asigma(\IE_\kappa)=\Asigma(\IE_\kappa^\circ)=\Asigma$ for any cardinal $\kappa\in[2,\w]$.
\end{lemma}

\begin{proof} The inclusions $\IE^\circ_2\subseteq\IE_\kappa^\circ\subseteq\IE_\kappa\subseteq\IE_\w$ imply the inequalities
$$\Asigma(\IE_\w)\le\Asigma(\IE_\kappa)\le\Asigma(\IE^\circ_\kappa)\le\Asigma(\IE^\circ_2).$$ It remains to prove that $\Asigma(\IE_2^\circ)\le\Asigma(\IE_\w)$ and $\Asigma(\IE_\w)\le\Asigma\le\Asigma(\IE_2^\circ)$, which is done in the next three claims.

\begin{claim}\label{cl:Sigma} $\Asigma(\IE_2^\circ)\ge\Asigma$.
\end{claim}

\begin{proof} The inequality $\Asigma(\IE_2^\circ)\ge\Asigma$ will follow as soon as we show that for every family $\E\subseteq\IE_2^\circ$ of cardinality $|\E|<\Asigma$, there exists a pair of two $\E$-separated sets $I,J\in[\w]^\w$. For every (cellular) entourage $E\in\E$ consider the partition $P_E=\{E(x):x\in \w\}$ and find an involution $h_E\in S_\w$ such that $P_E=\{\{x,h_E(x)\}:x\in\w\}$ and hence $E(x)=\{x,h_E(x)\}$ for all $x\in \w$. 

It follows that $H=\{h_E\}_{E\in\E}$ is a subset of the permutation group $S_\w$ such that $|H|\le|\E|<\Asigma$. By the definition of the cardinal $\Asigma$, there exist infinite sets $I,J\subseteq\w$ such that for every $h\in H$ the set $h[I]\cap J$ is finite. Replacing $I,J$ by smaller infinite sets, we can assume that $I\cap J=\emptyset$. Then for every $E\in\E$ the set $E[I]\cap J=h_E[I]\cap J$ is finite and so is the set $E[I]\cap E[J]=E[E[I]\cap J]$. This means that the sets $I,J$ are $\E$-separated.
\end{proof}

\begin{claim} $\Asigma(\IE_\w)\le\Asigma$
\end{claim}

\begin{proof} For every $h\in S_\w$ consider the entourage $E_h=\{(x,y)\in\w\times\w:y\in\{x,h(x)\}\}$. By the definition of $\Asigma$, there exists a set $H\subseteq S_\w$ of cardinality $|H|=\Asigma$ such that for every sets $I,J\in[\w]^\w$ there exists $h\in H$ such that the set $h[I]\cap J$ is infinite. Then also the set $E_h[I]\cap E_h[J]\supseteq h[I]\cap J$ is infinite. The family family $\E=\{E_h\}_{h\in H}$ witnesses that $\Asigma(\IE_\w)\le|\E|\le|H|=\Asigma$.
\end{proof}

\begin{claim}\label{cl:4.15s} $\Asigma(\IE^\circ_2)\le\Asigma(\IE_\w)$.
\end{claim}

\begin{proof} The inequality $\Asigma(\IE_2^\circ)\le\Asigma(\IE_\w)$ will follow as soon as we prove that for every family $\E\subseteq\IE_\w$ of cardinality $|\E|<\Asigma(\IE^\circ_2)$ there exists a pair of $\E$-separated infinite sets $I,J\subseteq\w$. By Claim~\ref{cl:Sigma} and Theorem~\ref{t:DS}, the cardinal $\Asigma(\IE^\circ_2)\ge\Asigma$ is uncountable.  By Lemma~\ref{l:decompose}, for every entourage $E\in\E$ there exists a finite family $\F_E\subseteq\IE^\circ_2$ such that $E\subseteq\bigcup\F_E$. Since the family $\F=\bigcup_{E\in\E}\F_E\subseteq\IE^\circ_2$ has cardinality $|\F|\le \max\{\w,|\E|\}<\Asigma(\IE_2^\circ)$, there exists a pair of $\F$-separated sets $I,J\in[\w]^\w$. Observe that for every  entourage $E\in\E$ the set
$$E[I]\cap E[J]\subseteq \bigcup_{F,F'\in\F_E}F[I]\cap F'[J]$$is finite, which means that the sets $I,J$ are $\E$-separated.
\end{proof}

\end{proof}
\end{proof}

Proposition~\ref{p4.1} and Theorem~\ref{t:main} imply the following corollary, which is the main result of this section.

\begin{corollary}\label{c:weights}The cardinal
\begin{enumerate}
\item  $\mathfrak b$ is equal to the smallest weight of an indiscrete locally finite coarse structure on $\w$;
\item $\mathfrak b$ is equal to the smallest weight of an inseparated locally finite coarse structure on $\w$;
\item  $\mathfrak d$ is equal to the smallest weight of a large locally finite coarse structure on $\w$;
\item  $\Adelta$ is equal to the smallest weight of an indiscrete finitary coarse structure on $\w$;
\item  $\Asigma$ is equal to the smallest weight of an inseparated finitary coarse structure on $\w$;
\item  $\mathfrak c$ is equal to the smallest weight of a large finitary coarse structure on $\w$.
\end{enumerate}
\end{corollary}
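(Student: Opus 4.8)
The plan is to obtain all six equalities by composing the two preceding results: Proposition~\ref{p4.1} identifies each ``smallest weight'' with one of the cardinals $\Adelta(\IE_\kappa)$, $\Asigma(\IE_\kappa)$, $\Alambda(\IE_\kappa)$ for $\kappa\in\{\w,\w_1\}$, and Theorem~\ref{t:main} evaluates each of those cardinals explicitly. The only care needed is to match the finiteness constraint in the corollary with the correct subscript: a \emph{finitary} coarse structure corresponds to $\kappa=\w$, and a \emph{locally finite} one to $\kappa=\w_1$.

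For the three locally finite items I would invoke the $\w_1$-part of the data. By Proposition~\ref{p4.1}(1) the smallest weight of an indiscrete locally finite coarse structure is $\Adelta(\IE_{\w_1})$, which equals $\mathfrak b$ by Theorem~\ref{t:main}(1); this gives item~(1). The same Theorem~\ref{t:main}(1) yields $\Asigma(\IE_{\w_1})=\mathfrak b$, which combined with Proposition~\ref{p4.1}(2) gives item~(2). Finally Proposition~\ref{p4.1}(3) reduces item~(3) to $\Alambda(\IE_{\w_1})$, evaluated as $\mathfrak d$ in Theorem~\ref{t:main}(2).

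For the three finitary items I would use the $\kappa=\w$ instances of the remaining parts of Theorem~\ref{t:main}. Proposition~\ref{p4.1}(1) identifies the smallest weight of an indiscrete finitary coarse structure with $\Adelta(\IE_\w)=\Adelta$ (Theorem~\ref{t:main}(4)), giving item~(4); Proposition~\ref{p4.1}(2) together with Theorem~\ref{t:main}(5) gives $\Asigma(\IE_\w)=\Asigma$ for item~(5); and Proposition~\ref{p4.1}(3) together with Theorem~\ref{t:main}(3) gives $\Alambda(\IE_\w)=\mathfrak c$ for item~(6). There is no substantive obstacle here, since the corollary is a pure restatement and no new estimate is required; in particular the cellular families $\IE_\kappa^\circ$ never need to be consulted, as only the finitary and locally finite classes appear. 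The sole risk is a mislabelling between the two finiteness regimes, which is avoided by adhering to the $\kappa=\w$ versus $\kappa=\w_1$ convention above.
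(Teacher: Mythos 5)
Your proposal is correct and is precisely the paper's own argument: the author derives Corollary~\ref{c:weights} directly by combining Proposition~\ref{p4.1} (which identifies each smallest weight with $\Adelta(\IE_\kappa)$, $\Asigma(\IE_\kappa)$ or $\Alambda(\IE_\kappa)$ for $\kappa\in\{\w,\w_1\}$) with the evaluations in Theorem~\ref{t:main}, exactly as you do. Your matching of items to the parts of Theorem~\ref{t:main} --- items (1)--(3) via parts (1)--(2) in the locally finite case $\kappa=\w_1$, and items (4)--(6) via parts (4), (5), (3) in the finitary case $\kappa=\w$ --- is accurate, and you are right that the cellular families $\IE_\kappa^\circ$ play no role here.
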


\begin{remark} Discrete coarse spaces are considered as large-scale counterparts of convergent sequences in classical Topology. In light of this observation, Corollary~\ref{c:weights} shows that the discreteness number $\Adelta$ can be considered as a large-scale counterpart of the cardinal $\mathfrak {z}$ defined in \cite[1.2]{BDow} (following the suggestion of Damian Sobota \cite{Sob}) as the smallest weight of an infinite compact Hausdorff space that contains no nontrivial convergent sequences. As was observed by Will Brian at ({\tt mathoverflow.net/q/352984}), the cardinals $\Adelta$ and $\mathfrak z$ are incomparable in ZFC, which can be seen as a reflection of the incomparability of Topology and Asymptology. 
\end{remark}

Now we are able to show that in the definition of the cardinals $\Adelta$ and $\Asigma$ the whole permutation group $S_\w$ can be replaced by its subset $I_\w$ consisting of almost free involutions of $\w$. An involution $f:X\to X$ of a set $X$ is called {\em almost free} if it has at most one fixed point $x=f(x)$. 

\begin{theorem}\label{t:alternative}{\color{white}\hfill.}
\begin{enumerate}
\item $\Adelta=\min\{|H|:H\subseteq I_\w\;\wedge\;\forall A\in[\w]^\w\;\exists h\in H\;\;(h[A]\cap A\in[\w]^\w)\}$,
\item $\Asigma=\min\{|H|:H\subseteq I_\w\;\wedge\;\forall A,B\in[\w]^\w\;\exists h\in H\;\;(h[A]\cap B\in[\w]^\w)\}$.
\end{enumerate}
\end{theorem}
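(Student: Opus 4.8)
The plan is to prove both equalities by establishing two inequalities for each. Let me denote by $\Adelta^I$ and $\Asigma^I$ the cardinals on the right-hand sides of (1) and (2), defined using the restricted family $I_\w$ of almost free involutions.

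\textbf{The easy direction.} Since $I_\w\subseteq S_\w$, any witnessing family $H\subseteq I_\w$ is also a witnessing family inside $S_\w$. For item (2) this is immediate: a set $H\subseteq I_\w$ satisfying the separation condition is a set $H\subseteq S_\w$ satisfying the same condition, so $\Asigma\le\Asigma^I$. For item (1), observe that an almost free involution $h$ satisfies $h(a)=a$ for at most one $a$, so for any infinite $A$ the set $\{a\in A:a\ne h(a)\in A\}$ differs from $h[A]\cap A$ by at most one point; hence the $I_\w$-version of the discreteness condition ($h[A]\cap A\in[\w]^\w$) implies the $S_\w$-version ($\{a\in A:a\ne h(a)\in A\}\in[\w]^\w$) for involutions, giving $\Adelta\le\Adelta^I$. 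First I would record these trivial inequalities $\Adelta\le\Adelta^I$ and $\Asigma\le\Asigma^I$.

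\textbf{The hard direction} is to show $\Adelta^I\le\Adelta$ and $\Asigma^I\le\Asigma$, i.e. that an arbitrary witnessing family $H\subseteq S_\w$ can be replaced by a family of almost free involutions of the same cardinality (up to a finite or countable factor that is absorbed since these cardinals are uncountable by Theorem~\ref{t:DS}). The key device is exactly the graph-coloring trick already used in the proof that $\mathfrak s\le\Adelta$. Given $h\in S_\w$, form the graph on $\w$ with edges $\{x,h(x)\}$ and $\{x,h^{-1}(x)\}$; this graph has degree at most $2$, hence chromatic number at most $3$, so $\w$ splits into three sets $A_h,B_h,C_h$ on each of which $h$ and $h^{-1}$ move points outside the part. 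On each such part, the partial map $x\mapsto h(x)$ (restricted to those $x$ whose image stays in a controlled region) is a partial matching, which extends to a genuine almost free involution. More precisely, I would decompose the permutation-induced entourage $E_h=\{(x,y):y\in\{x,h(x),h^{-1}(x)\}\}$, which lies in $\IE_3$, into finitely many members of $\IE_2^\circ$ via Lemma~\ref{l:decompose}, and then use the correspondence between cellular entourages in $\IE_2^\circ$ and almost free involutions exploited in Claims~\ref{cl:Delta} and \ref{cl:Sigma}: each partition $P_E=\{E(x):x\in\w\}$ with all cells of size $\le 2$ is precisely the cycle decomposition of an almost free involution.

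\textbf{Assembling the argument.} The cleanest route is to read off the theorem directly from Theorem~\ref{t:main} together with Claims~\ref{cl:Delta} and \ref{cl:Sigma}. Indeed, the definitions $\Adelta^I$ and $\Asigma^I$ are literally the statements $\Adelta^I=\Adelta(\IE_2^\circ)$ and $\Asigma^I=\Asigma(\IE_2^\circ)$ once one identifies members $E\in\IE_2^\circ$ with almost free involutions $h_E$ via $E(x)=\{x,h_E(x)\}$: the conditions $h[A]\cap A\in[\w]^\w$ and $E[A]\cap A\supseteq(\text{matched pairs})\in[\w]^\w$, respectively $h[A]\cap B\in[\w]^\w$ and the separation failure $E[A]\cap E[B]\in[\w]^\w$, translate into one another. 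Lemma~\ref{l:ADelta} gives $\Adelta(\IE_2^\circ)=\Adelta$ and Lemma~\ref{l:ASigma} gives $\Asigma(\IE_2^\circ)=\Asigma$, which yields both equalities. Thus I would phrase the proof as: establish the involution--entourage dictionary, verify that the two combinatorial conditions match under this dictionary (the one genuinely new point being the single-fixed-point bookkeeping, absorbed because the target cardinals are uncountable), and then invoke Claims~\ref{cl:Delta}, \ref{cl:Sigma} and Lemmas~\ref{l:ADelta}, \ref{l:ASigma}. The main obstacle is purely bookkeeping: making the finitely-many-involutions-per-permutation replacement while controlling that the resulting maps are genuinely involutions with at most one fixed point, which the $3$-coloring supplies automatically.
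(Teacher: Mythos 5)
Your proposal is correct and follows essentially the same route as the paper's own proof: the trivial inequalities $\Adelta\le\Adelta^I$, $\Asigma\le\Asigma^I$ from $I_\w\subseteq S_\w$, followed by the reduction to $\Adelta(\IE_2^\circ)=\Adelta$ and $\Asigma(\IE_2^\circ)=\Asigma$ from Theorem~\ref{t:main}(4),(5) together with the dictionary between entourages in $\IE_2^\circ$ and almost free involutions (the graph-coloring/Lemma~\ref{l:decompose} machinery you mention is already absorbed into those lemmas, exactly as in the paper). The only details to make explicit, both present in the paper's version, are that an $E\in\IE_2^\circ$ may have infinitely many singleton cells, so one must pair these up to obtain an almost free $h_\alpha$ with $E_\alpha(x)\subseteq\{x,h_\alpha(x)\}$, and that in the $\Asigma$ case one must first pass to disjoint infinite subsets $A'\subseteq A$, $B'\subseteq B$ before the conditions $E[A']\cap E[B']\in[\w]^\w$ and $h[A]\cap B\in[\w]^\w$ translate into one another.
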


\begin{proof} We should prove that $\Adelta=\Adelta'$ and $\Asigma=\Asigma'$ where 
$$
\begin{aligned}
\Adelta'&=\min\{|H|:H\subseteq I_\w\;\wedge\;\forall A\in[\w]^\w\;\exists h\in H\;\;(h[A]\cap A\in[\w]^\w)\},\\
\Asigma'&=\min\{|H|:H\subseteq I_\w\;\wedge\;\forall A,B\in[\w]^\w\;\exists h\in H\;\;(h[A]\cap B\in[\w]^\w)\}.
\end{aligned}
$$
The inclusion $I_\w\subset S_\w$ implies $\Adelta\le\Adelta'$ and $\Asigma\le\Asigma'$. 
\smallskip

By Theorem~\ref{t:main}(4), $\Adelta=\Adelta(\IE^\circ_2)$. By the definition of the cardinal $\Adelta(\IE^\circ_2)$, there exists a family of entourages $\{E_\alpha\}_{\alpha\in\Adelta}\subseteq\IE^\circ_2$ such that for any infinite set $A\subseteq\w$ there exists $\alpha\in\Adelta$ such that the set $\{x\in A:A\cap E_\alpha(x)\ne\{x\}\}$ is infinite. For every $\alpha\in\Adelta$, the cellular entourage $E_\alpha\in\IE^\circ_2$ determines a partition $\{E_\alpha(x):x\in \w\}$ of $\w$ into subsets of cardinality $\le 2$. Then we can find an almost free involution $h_\alpha$ of $\w$ such that $E_\alpha(x)\subseteq\{x,h_\alpha(x)\}$ for every $x\in\w$. Given any $A\in[\w]^\w$, find $\alpha\in\Adelta$ such that the set $\{x\in A:A\cap E_\alpha(x)\ne\{x\}\}$ is infinite and conclude that the set $A\cap h_\alpha(A)=\{x\in A:h_\alpha(x)\in A\}\supseteq \{x\in A:A\cap E_\alpha(x)\ne\{x\}\}$ is infinite, too.  Then $\Adelta'\le|\{h_\alpha\}_{\alpha\in\Adelta}|=\Adelta$ and hence $\Adelta'=\Adelta$.
\smallskip

By Theorem~\ref{t:main}(5), $\Asigma=\Asigma(\IE^\circ_2)$. By the definition of the cardinal $\Asigma(\IE^\circ_2)$, there exists a family of entourages $\{E_\alpha\}_{\alpha\in\Asigma}\subset\IE^\circ_2$ such that for every infinite sets $A,B\subseteq\w$ there exists $\alpha\in\Adelta$ such that the set $E_\alpha[A]\cap E_\beta[B]$ is infinite. For every $\alpha\in\Adelta$, the cellular entourage $E_\alpha\in\IE^\circ_2$ determines a partition $\{E_\alpha(x):x\in \w\}$ of $\w$ into subsets of cardinality $\le 2$. Then we can find an almost free involution $h_\alpha$ of $\w$ such that $E_\alpha(x)\subseteq\{x,h_\alpha(x)\}$ for every $x\in\w$. Given any infinite sets $A,B\in[\w]^\w$, choose disjoint infinite sets $A'\subseteq A$ and $B'\subseteq B$ and find $\alpha\in\Adelta$ such that the set $E_\alpha[A']\cap E_\alpha[B']$ is infinite and so is the set $A'\cap E_\alpha^{-1}E_\alpha[B']=A'\cap E_\alpha[B']\subseteq A'\cap h_\alpha(B')\subseteq A\cap h(B)$. Then $\Asigma'\le|\{h_\alpha\}_{\alpha\in\Asigma}|=\Asigma$ and finally, $\Asigma'=\Asigma$.
\end{proof}

\section{Critical cardinalities related to indiscrete, inseparated or large cellular coarse spaces}\label{s:cc}

In this section we try to evaluate the smallest weight of an indiscrete, inseparated or large {\em cellular} locally finite or finitary coarse structure on $\w$. This problem turns out to be difficult because the celularity is not preserved by compositions of entourages. So, even very basic questions remain open. For example, we do not know if inseparated cellular finitary coarse spaces exist in ZFC.

That is why we introduce the following definitions.

\begin{definition} For a cardinal $\kappa\in\{\w,\w_1\}$, let
\begin{itemize}
\item $\Adelta^\circ_{\kappa}=\min (\{\mathfrak c^+\}\cup\{w(\E):\E\subseteq\IE_{\kappa}$ is an indiscrete cellular coarse structure on $\w\})$;
\item $\Asigma^\circ_{\kappa}=\min(\{\mathfrak c^+\}\cup\{w(\E):\E\subseteq\IE_{\kappa}$ is an inseparated cellular coarse structure on $\w\})$;
\item $\Alambda^\circ_{\kappa}=\min(\{\mathfrak c^+\}\cup\{w(\E):\E\subseteq\IE_{\kappa}$ is a large cellular coarse structure on $\w\})$.
\end{itemize}
\end{definition}

The following diagram describes all known order relations between the cardinals $\Adelta^\circ_\kappa$, $\Asigma_\kappa^\circ$, $\Alambda_\kappa^\circ$ and the cardinals $\mathfrak t,\mathfrak b,\mathfrak d,\Adelta,\Asigma,\mathfrak c$. For two cardinals $\alpha,\beta$ an arrow $\alpha\to\beta$ (without label) indicates that $\alpha\le\beta$ in ZFC. A label at an arrow indicates the assumption under which the corresponding inequality holds.

$$
\xymatrix{
\non(\M)\ar[rr]&&\mathfrak c\ar[r]&\mathfrak c^+\\
\Asigma\ar[u]\ar@/^25pt/[rr]&\Adelta_\w^\circ\ar[r]\ar[ur]&\Asigma_\w^\circ\ar[r]\ar_{\Adelta_\w^\circ=\mathfrak c}[u]&\Alambda_\w^\circ\ar@{<->}[u]\\
\Adelta\ar[ur]\ar[u]&\Adelta_{\w_1}^\circ\ar[r]\ar[u]&\Asigma_{\w_1}^\circ\ar[r]\ar[u]\ar^{\mathfrak t=\mathfrak b}[ld]&\Alambda_{\w_1}^\circ\ar[u]\ar@/^10pt/^{\mathfrak t=\mathfrak d}[d]\\
\mathfrak t\ar[r]\ar[u]&\mathfrak b\ar[rr]\ar[u]\ar[ul]&&\mathfrak d\ar[u]\ar[r]&\mathfrak c\\
}
$$
\medskip

Non-trivial arrows at this diagram are proved in the following theorem, which is the main result of this section.

\begin{theorem}\label{t:cc}\hfill{\color{white}.}
\begin{enumerate}
\item $\Alambda^\circ_\w=\mathfrak c^+$.
\item $\Adelta_\w^\circ\le\mathfrak c$.
\item $\Adelta^\circ_\w=\mathfrak c$ implies $\Asigma^\circ_{\w_1}\le\Asigma_\w^\circ=\mathfrak c$.
\item $\mathfrak t=\mathfrak b$ implies $\Adelta_{\w_1}^\circ=\Asigma_{\w_1}^\circ=\mathfrak b$. 
\item $\mathfrak t=\mathfrak d$ implies $\Alambda_{\w_1}^\circ=\mathfrak d$.
\end{enumerate}
\end{theorem}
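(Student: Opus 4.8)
The plan is to handle the five items roughly in increasing order of difficulty, since (1) and (2) are immediate from what precedes. For (1), Theorem~\ref{t:hypercellular} says that a large finitary coarse space cannot be cellular, so there is no large cellular finitary coarse structure on $\w$ at all; hence the defining minimum for $\Alambda^\circ_\w$ ranges over the empty set together with $\{\mathfrak c^+\}$, giving $\Alambda^\circ_\w=\mathfrak c^+$. For (2), I would take a maximal cellular finitary coarse structure $\E$, which exists by the Kuratowski--Zorn lemma because the union of a chain of cellular finitary coarse structures is again one. I would then check that $\E$ is indiscrete: were $D$ an infinite $\E$-discrete set, the cellular entourage $E_D$ pairing off consecutive points of $D$ would not lie in $\E$ (otherwise $\{x\in D:D\cap E_D(x)\ne\{x\}\}=D$ is unbounded), while $\E$-discreteness of $D$ forces the transitive closure of $F\cup E_D$ to have uniformly finite classes for each $F$ in a cellular base of $\E$; thus $\langle\E\cup\{E_D\}\rangle$ is a strictly larger cellular finitary coarse structure, contradicting maximality. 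Since $|\IE^\circ_\w|=\mathfrak c$, we have $w(\E)\le\mathfrak c$, so $\Adelta^\circ_\w\le\mathfrak c$.

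For (3) the two structural inequalities are free: inseparated implies indiscrete, so $\Asigma^\circ_\w\ge\Adelta^\circ_\w=\mathfrak c$, and any finitary witness for $\Asigma^\circ_\w$ is locally finite, giving $\Asigma^\circ_{\w_1}\le\Asigma^\circ_\w$. The real content is the existence, under $\Adelta^\circ_\w=\mathfrak c$, of an inseparated cellular finitary coarse structure of weight $\le\mathfrak c$. I would build one by transfinite recursion of length $\mathfrak c$: fix an enumeration $\{(A_\alpha,B_\alpha)\}_{\alpha<\mathfrak c}$ of all pairs of disjoint infinite subsets of $\w$, and construct an increasing chain $(\E_\alpha)_{\alpha<\mathfrak c}$ of cellular finitary coarse structures with $w(\E_\alpha)<\mathfrak c$ so that $A_\alpha,B_\alpha$ are not $\E_{\alpha+1}$-separated. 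At stage $\alpha$, since $w(\E_\alpha)<\mathfrak c=\Adelta^\circ_\w$, the restrictions $\E_\alpha{\restriction}A_\alpha$ and $\E_\alpha{\restriction}B_\alpha$ are not indiscrete, yielding infinite $\E_\alpha$-discrete sets $S_A\subseteq A_\alpha$ and $S_B\subseteq B_\alpha$. Now a dichotomy: either $S_A\cup S_B$ fails to be $\E_\alpha$-discrete, in which case some symmetric $F\in\E_\alpha$ links infinitely many points of $S_A$ to points of $S_B$ (as $S_A,S_B$ are individually discrete), so $F[A_\alpha]\cap F[B_\alpha]$ is already infinite and $A_\alpha,B_\alpha$ are not $\E_\alpha$-separated; or $S_A\cup S_B$ is $\E_\alpha$-discrete, in which case I pair $S_A$ with $S_B$ and adjoin the cellular entourage $E$ whose nontrivial classes are exactly these pairs. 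Discreteness of $S_A\cup S_B$ ensures $\langle\E_\alpha\cup\{E\}\rangle$ is cellular finitary of weight $<\mathfrak c$, and its new classes witness $E[A_\alpha]\cap E[B_\alpha]\in[\w]^\w$. Since non-separation is preserved under enlargement, $\E=\bigcup_{\alpha<\mathfrak c}\E_\alpha$ is inseparated, cellular finitary, of weight $\le\mathfrak c$, giving $\Asigma^\circ_\w\le\mathfrak c$.

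Items (4) and (5) share a common pattern and I would prove them in parallel. The lower bounds are free from Theorem~\ref{t:main}: an indiscrete (resp.\ large) cellular locally finite coarse structure is in particular an indiscrete (resp.\ large) locally finite one, so $\mathfrak b=\Adelta(\IE_{\w_1})\le\Adelta^\circ_{\w_1}\le\Asigma^\circ_{\w_1}$ and $\mathfrak d=\Alambda(\IE_{\w_1})\le\Alambda^\circ_{\w_1}$. Thus it suffices, under $\mathfrak t=\mathfrak b$ (resp.\ $\mathfrak t=\mathfrak d$), to construct an inseparated (resp.\ large) cellular locally finite coarse structure of weight $\le\mathfrak b$ (resp.\ $\le\mathfrak d$). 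The building blocks are the interval-partition entourages associated with iteration sequences $\{f^n(0)\}_{n\in\w}$ used in Lemma~\ref{l:DS=b} and Claim~\ref{cl:4.9}: the arguments there show that an \emph{unbounded} family of such $f$ yields entourages leaving no two infinite sets separated, while a \emph{dominating} family yields entourages making every infinite set large. Since $\mathfrak t\le\mathfrak b\le\mathfrak d$, the hypothesis $\mathfrak t=\mathfrak b$ (resp.\ $\mathfrak t=\mathfrak d$) supplies an unbounded (resp.\ dominating) family of size $\mathfrak b$ (resp.\ $\mathfrak d$) plus the tower-theoretic room needed for cellularity. Concretely I would recurse over $\xi<\mathfrak b$ (resp.\ $\xi<\mathfrak d$): at stage $\xi$ the cut-point sets chosen so far form a $\subseteq^*$-decreasing chain of length $<\mathfrak t$, hence admit a pseudo-intersection $C$; I then choose $f_\xi$ whose iteration sequence lies in $C$ (securing nestedness of the partitions, so that finite joins retain finite cells and the generated structure is cellular) and which dominates the $\xi$-th member of the fixed unbounded (resp.\ dominating) family (securing inseparatedness (resp.\ largeness)). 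The resulting structure has a nested base of size $\le\mathfrak b$ (resp.\ $\le\mathfrak d$) and the desired property, forcing equality.

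The main obstacle in (4) and (5) is reconciling the two competing demands on the cut-point sets: cellularity requires the interval partitions to nest, whereas inseparatedness and largeness are witnessed through domination/unboundedness whose alignment is delicate, since a single interval partition can miss a sparse set when its cut points fall unfavourably — precisely the reason the proofs in Lemma~\ref{l:DS=b} and Claim~\ref{cl:4.9} employ the doubled, offset entourages $E^0_f,E^1_f$. I expect the technical heart to be organizing the recursion so that these doubled partitions nest along the tower while still dominating the prescribed family, with $\mathfrak t=\mathfrak b$ (resp.\ $\mathfrak t=\mathfrak d$) guaranteeing that pseudo-intersections survive at each of the $<\mathfrak t$ stages. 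A routine final point, handled via the coarse-structure axioms (which absorb finite modifications), upgrades ``all but finitely many cells meet the set'' to the exact equality $E[L]=\w$ required for largeness.
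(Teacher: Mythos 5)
Your items (1)--(3) and the lower bounds in (4)--(5) essentially follow the paper's route. Item (1) is exactly the paper's deduction from Theorem~\ref{t:hypercellular}. Your transitive-closure verification in (2) is a repackaging of the paper's Lemma~\ref{l:cellular} (adjoining an involution whose support is a discrete set keeps the generated structure cellular), which is how Lemma~\ref{l:5.4} proceeds. Your recursion in (3) is the paper's Lemma~\ref{l:ultratrans}; your dichotomy on whether $S_A\cup S_B$ is $\E_\alpha$-discrete is equivalent in effect to the paper's case split, and the only thing to add is the explicit bookkeeping $w(\E_\alpha)\le|w(\E_0)+\alpha|$ at limit stages, since $\mathfrak c$ need not be regular (the paper records this; it is routine). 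Item (5) also matches Lemma~\ref{l:t=d} in substance: a \emph{single} interval partition per stage suffices there, with cut points $x_\alpha$ chosen in the tower's pseudo-intersection and tied to $f_\alpha$ by condition (d) (if $f_\alpha^n(0)\le x_\alpha(k)$ then $f_\alpha^{n+2}(0)<x_\alpha(k+1)$), which makes every cell long enough that domination yields $E_\alpha[A]=\w$ outright; your phrasing ``choose $f_\xi$ whose iteration sequence lies in $C$'' should be read as ``choose a cut sequence inside $C$ outgrowing the iterates of $f_\xi$,'' but that is a presentational point.

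Your mechanism for (4), however, fails. You propose to nest the doubled, offset entourages $E^0_f,E^1_f$ from Lemma~\ref{l:DS=b} along the tower; but no cellular locally finite coarse structure can contain both $E^0_f$ and $E^1_f$ for even a single $f$. The cells $[f^{2n}(0),f^{2n+2}(0))$ and $[f^{2n+1}(0),f^{2n+3}(0))$ overlap in a chain, so the equivalence relation generated by $E^0_f\cup E^1_f$ has the single infinite class $[f^0(0),\infty)=\w$. Since any coarse structure containing both entourages contains some $T\supseteq E^0_f E^1_f\supseteq E^0_f\cup E^1_f$, a cellular base element above $T$ would be a cellular locally finite entourage with an infinite cell --- a contradiction. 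Thus the very ``doubling'' trick that rescues sparse sets in the proof of $\Asigma(\IE^\circ_{\w_1})\le\mathfrak b$ is precisely what cellularity forbids, and your recursion cannot be organized as stated. The paper's Lemma~\ref{l:t=b} resolves the sparse-set problem differently: one partition $E_\alpha$ per stage, but the auxiliary function $g$ attached to a pair $A,B$ is chosen so that each window $[x,g(x))$ contains \emph{three} alternating points $a<b<a'$ with $a,a'\in A$ and $b\in B$; condition (d) together with $f_\alpha\not\le^* g\circ g$ then forces $[x,g(x)]$ into at most two adjacent cells of $E_\alpha$, so whichever way the single cut point falls, one cell captures a point of $A$ together with a point of $B$. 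Without this three-point device (or some other substitute for the forbidden offset partition), your item (4) does not go through.
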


\begin{proof} The equality $\Alambda^\circ_\w=\mathfrak c^+$ follows from Theorem~\ref{t:hypercellular}. The other four statements are proved in Lemmas~\ref{l:5.4}, \ref{l:ultraexist}, \ref{l:t=b}, \ref{l:t=d}, respectively.  Let us recall that a self-map $f:X\to X$ of a set $X$ is called an {\em involution} if $f\circ f$ is the identity map of $X$.

\begin{lemma}\label{l:cellular} Let $(X,\E)$ be a cellular  coarse space, $\xi:X\to X$ be an involution, and $D=\big\{(x,y)\in X\times X:y\in\{x,\xi(x)\}\big\}$. If the set $\{x\in X:\xi(x)\ne x\}$ is $\E$-discrete,  then the smallest coarse structure $\tilde\E$  containing $\E\cup\{D\}$ is cellular.
\end{lemma}

\begin{proof} The coarse structure $\E$ is cellular and hence has a base $\mathcal B$ consisting of cellular entourages. Since the set $T=\{x\in X:x\ne \xi(x)\}$ is $\E$-discrete, for every (cellular) entourage $E\in\mathcal B$ there exists a finite subset $F_E\subseteq T$ such that $E(x)\cap T=\{x\}$ for any $x\in T\setminus F_E$. Replacing $F_E$ by $D[F_E]$, we can assume that $F_E=D[F_E]$. Consider the entourage $\tilde E$ on $X$ such that $\tilde E(x)=E[F_E]$ for any $x\in E[F_E]$ and $\tilde E(x)=E\circ D\circ E(x)$ for any $x\in X\setminus E[F_E]$.% It is clear that $\tilde E$ is finitary.

We claim that the entourage $\tilde E$ is cellular. The cellularity of the entourages $D,E$ implies that $\tilde E^{-1}=\tilde E$. To show that $\tilde E\circ\tilde E=\tilde E$, we should check that $\tilde E(y)\subseteq\tilde E(x)$ for every $x\in X$ and $y\in\tilde E(x)$. If $x\in E[F_E]$, then $y\in \tilde E(x)=E[F_E]$ and $\tilde E(y)=E[F_E]=\tilde E(x)$. If $x\notin E[F_E]$, then $y\in E\circ D\circ E(x)$ and there exist $u,v\in X$ such that $y\in E(u)$, $u\in D(v)$ and $v\in E(x)$. If $u=v$, then $y\in E(u)=E(v)\subseteq E\circ E(x)=E(x)$ and $\tilde E(y)=E\circ D\circ E(y)\subseteq E\circ D\circ E\circ E(x)=E\circ D\circ E(x)=\tilde E(x)$.
So, we assume that $u\ne v$. In this case $\{u,v\}=D(u)=D(v)$. The choice of the set $F_E\not\ni u$ guarantees that $T\cap E(u)=\{u\}$ and hence $D\circ E(u)=E(u)\cup\{v\}$.

Then $\tilde E(y)=E\circ D\circ E(y)\subseteq E\circ D\circ E\circ E(u)=E\circ D\circ E(u)=E\circ (E(u)\cup\{v\})=E(u)\cup E(v)=E\circ D(v)\subseteq E\circ D\circ E(x)=\tilde E(x)$. This completes the proof of the cellularity of $\tilde E$. 

It is easy to see that the family $\{\tilde E:E\in\mathcal B\}$ is a base of the coarse structure $\tilde{\E}$, which implies that $\tilde\E$ is cellular.
\end{proof}

\begin{lemma}\label{l:5.4} Each maximal cellular finitary coarse structure on $\w$ is indiscrete. Consequently, $\Adelta_\w^\circ\le\mathfrak c$.
\end{lemma}

\begin{proof} Let $\E$ be any maximal cellular finitary coarse structure on $X=\w$. Such a structure exists by the Kuratowski--Zorn Lemma. Being cellular, the coarse structure $\E$ has a base $\mathcal B$ consisting of cellular entourages.

We claim that the cellular finitary coarse space $(X,\E)$ is indiscrete. Assuming the opposite, we could find an infinite discrete  subspace $T$ in $(X,\E)$. Let $\xi:X\to X$ be any involution of $X$ such that $\{x\in X:x\ne\xi(x)\}=T$. Consider the cellular entourage $D=\{(x,y)\in X\times X:y\in\{x,\xi(x)\}\}$. By Lemma~\ref{l:cellular}, the smallest coarse structure $\tilde\E$ on $X$  containing $\E\cup\{D\}$ is cellular. It is clear that $\tilde \E$ is finitary. The maximality of $\E$ ensures that $\E=\tilde\E\ni D$. The entourage $D\in\E$ witnesses that the subspace $T$ is not discrete in $(X,\tilde{\E})=(X,\E)$. This contradiction shows that the coarse space $(X,\E)$ is indiscrete and hence $\Adelta_\w^\circ\le \mathfrak c$.
\end{proof}

\begin{lemma}\label{l:ultratrans} Let $\E_0$ be a cellular finitary coarse structure on a countable set $X$. If $w(\E_0)<\Adelta^\circ_\w=\mathfrak c$, then there exists an inseparated cellular finitary coarse structure $\E$ on $X$ such that $\E_0\subseteq \E$. 
\end{lemma}

\begin{proof} %The statement of the lemma is trivially true if the  set $X$ is finite. So we assume that $|X|=\w$ and $w(\E_0)<\Adelta_\w^\circ=\mathfrak c$. 
Let $\{(A_\alpha,B_\alpha)\}_{\alpha\in\mathfrak c}$ be an enumeration of the set $[X]^\w\times[X]^\w$ such that $A_0=B_0=X$. 

We shall inductively construct an increasing transfinite sequence of  cellular finitary coarse structures $(\E_\alpha)_{\alpha\in\mathfrak c}$ on $X$ such that for every $\alpha<\mathfrak c$, the coarse structure $\E_\alpha$ has weight $w(\E_\alpha)\le |\w(\E_0)+\alpha|$ and contains an entourage $D_\alpha$ such that $D_\alpha[A_\alpha]\cap D_\alpha[B_\alpha]$ is infinite.

The coarse structure $\E_0$ is already given and the entourage $D_0=\Delta_X\in\E_0$  has the required property: $D_0[A_0]\cap D_0[B_0]=D_0[X]\cap D_0[X]=X$.

Assume that for some nonzero ordinal $\alpha\in \mathfrak c$ we have constructed an increasing transfinite sequence $(\E_\beta)_{\beta\in \alpha}$ of cellular finitary coarse structures on $X$ such that $w(\E_\beta)\le|w(\E_0)+\beta|$ for all $\beta<\alpha$. Then the union $\E_{<\alpha}=\bigcup_{\beta<\alpha}\E_\beta$ is a cellular finitary coarse structure on $X$ of weight $w(\E_{<\alpha})\le\sum_{\beta<\alpha}w(\E_\beta)\le |\alpha|\cdot |w(\E_0)+\alpha|=|w(\E_0)+\alpha|<\mathfrak  c=\Adelta^\circ_\w$.  If for some entourage $D_{\alpha}\in\E_{<\alpha}$ the set $D_{\alpha}[A_\alpha]\cap D_{\alpha}[B_\alpha]$ is infinite, then put $\E_\alpha:=\E_{<\alpha}$ and complete the induction step.

Next, consider the other possibility: for any entourage $E\in \E_{<\alpha}$ the intersection $E[A_\alpha]\cap E[B_\alpha]$ is finite.  
The coarse structure $\E_{<\alpha}$ induces a cellular finitary coarse structures of weight $<\Adelta^\circ_\w$ on the sets $A_\alpha$ and $B_\alpha$. By the definition of the cardinal $\Adelta^\circ_\w$, the cellular finitary coarse spaces $(A_\alpha,\E_{<\alpha}{\restriction}A_\alpha)$ and $(B_\alpha,\E_{<\alpha}{\restriction}B_\alpha)$ contain infinite discrete subspaces $A'_\alpha$ and $B_\alpha'$, respectively. Our assumption implies that for every entourage $E\in\E_{<\alpha}$ the intersection $E[A_\alpha']\cap E[B'_\alpha]$ is finite and so is the intersection $A_\alpha'\cap B_\alpha'$. In this case the set $A_\alpha'\cup B_\alpha'$ is discrete in the coarse space $(X,\E_{<\alpha})$. Replacing $A_\alpha'$ and $B_\alpha'$ by smaller infinite sets, we can additionally assume that $A_\alpha'\cap B_\alpha'=\emptyset$.

Choose an involution $\xi_\alpha:X\to X$ such that $$\xi(A_\alpha')=B_\alpha',\;\xi(B_\alpha')=A_\alpha'\mbox{ \ and \ }\{x\in X:\xi_\alpha(x)\ne x\}=A_\alpha'\cup B_\alpha',$$
and consider the finitary entourage $$D_\alpha=\{(x,y)\in X\times X:y\in \{x,\xi_\alpha(x)\}\}$$on $X$. Observe that $D_\alpha[A_\alpha]\cap D_\alpha[B_\alpha]\supseteq A_\alpha'\cup B_\alpha'$ is infinite. By Lemma~\ref{l:cellular}, the smallest coarse structure $\E_\alpha$ containing $\E_{<\alpha}\cup \{D_\alpha\}$ is cellular and finitary. It is clear that $w(\E_\alpha)\le |w(E_{<\alpha})+\alpha|\le|w(\E_0)+\alpha|$. This completes the inductive step.
\smallskip

After completing the inductive construction, consider the coarse structure $\E=\bigcup_{\alpha\in\mathfrak c}\E_\alpha$ on $X$, and observe that it is cellular, finitary, and contains the coarse structure $\E_0$. To see that the coarse space $(X,\E)$ is inseparated, take two infinite sets $A,B$ in $X$ and find an ordinal $\alpha\in\mathfrak c$ such that $(A_\alpha,B_\alpha)=(A,B)$. Since  $D_\alpha[A]\cap D_\alpha[B]=D_\alpha[A_\alpha]\cap D_\alpha[B_\alpha]$ is infinite and $D_\alpha\in \E$, the sets $A$ and $B$ are not $\E$-separated.  
\end{proof}

\begin{lemma}\label{l:ultraexist} Under $\Adelta^\circ_\w=\mathfrak c$, there exists an inseparated cellular finitary coarse structure on $\w$ and hence $\Asigma_{\w_1}^\circ\le\Asigma_\w^\circ=\mathfrak c$.
\end{lemma}

\begin{proof} Consider the smallest coarse structure $\E_0=\{\Delta_\w\cup F:F\in[\w\times \w]^{<\w}\}$ on $\w$ and observe that $w(\E_0)=\w$. Assuming that $\Adelta^\circ_\w=\mathfrak c$, we can apply Lemma~\ref{l:ultratrans} and find an inseparated cellular finitary coarse structure $\E\supseteq\E_0$ on $\w$. Then $\Asigma_{\w_1}^\circ\le\Asigma_\w^\circ\le|\E|\le\mathfrak c$. Taking into account that $\mathfrak c=\Adelta^\circ_\w\le\Asigma_\w^\circ\le\mathfrak c$, we conclude that $\Asigma_\w^\circ=\mathfrak c$.
\end{proof}

\begin{lemma}\label{l:t=b} If $\mathfrak t=\mathfrak b$, then  $\Adelta^\circ_{\w_1}=\Asigma^\circ_{\w_1}=\mathfrak b$. 
\end{lemma}

\begin{proof}  By the definition of the cardinal $\mathfrak b$, there exists a set $\{f_\alpha\}_{\alpha\in \mathfrak b}\subseteq \w^\w$ such that for any function $f\in\w^\w$ there exists $\alpha\in\mathfrak b$ such that $f_\alpha\not\le^* f$. We lose no generality assuming that each function $f_\alpha$ is strictly increasing and $x<f_\alpha(x)$ for every $x\in \w$. For every $n\in\w$ denote by $f^n_\alpha$ the $n$-th iteration of $f_\alpha$ and observe that $f^0(0)=0$ and $f^n(0)<f^{n+1}(0)$ for any $n\in\w$.

For an ordinal $\alpha$ its {\em integer part} $\lfloor \alpha\rfloor$ is the unique finite ordinal such that $\alpha=\gamma+\lfloor \alpha\rfloor$ for some limit ordinal $\gamma$. For a function $x\in\w^\w$ by $x[\w]$ we denote the set $\{x(n):n\in\w\}$.

We shall inductively construct a family $\{x_\alpha\}_{\alpha\in\mathfrak b}\subseteq\w^\w$ of strictly increasing functions such that for every ordinals $\alpha\in\mathfrak b$ the following conditions are satisfied:
\begin{itemize}
\item[(a)] $x_\alpha(0)=0$ and $x_\alpha(1)>\lfloor\alpha\rfloor$; 
\item[(b)] $x_\alpha[\w]\subseteq^* x_\gamma[\w]$ for every $\gamma<\alpha$;
\item[(c)]  if $\alpha=\beta+1$ for some ordinal $\beta$, then $x_{\alpha}[\w]\subseteq x_\beta[\w]$;
\item[(d)] for every number $k,n\in \w$ with $f_\alpha^n(0)\le x_\alpha(k)$ we have $f_\alpha^{n+2}(0)<x_\alpha(k+1)$.
\end{itemize}
We start the inductive construction choosing any strictly increasing function $x_0\in\w^\w$  that satisfies the conditions (a) and (d).

Assume that for some ordinal $\alpha\in\mathfrak b$ we have constructed a function family $(x_\gamma)_{\gamma\in\alpha}$ satisfying the conditions (a)--(d). The condition (b) ensures that the family $(x_\gamma[\w])_{\gamma\in\alpha}$ is well-ordered by the reverse almost inclusion relation $\supseteq^*$. Now the definition of the cardinal $\mathfrak t$ and the equality $\mathfrak t=\mathfrak b>\alpha$ yield an infinite set $T_\alpha\in[\w]^\w$ such that $T_\alpha\subseteq^* x_\gamma[\w]$ for all $\gamma<\alpha$. If $\alpha=\beta+1$ for some ordinal $\beta$, then we can replace $T_\alpha$ by $x_\beta[\w]\cap T_\alpha$ and have additionally $T_\alpha\subseteq x_\beta[\w]$. Choose a strictly increasing function $x_\alpha\in T_\alpha^\w$ satisfying the conditions (a) and (d). The choice of the set $T_\alpha$ ensures that the conditions (b) and (c) hold, too.

After completing the inductive construction, for every $\alpha\in\mathfrak b$, consider the cellular locally finite entourage $$E_\alpha=\bigcup_{n\in\w}[x_\alpha(n),x_\alpha(n+1))^2$$on $\w$.
The inductive conditions (a)--(c) imply that $$\{0,\dots,\lfloor\alpha\rfloor\}^2\subset  E_\alpha\subseteq^* E_\beta\subseteq E_{\beta+1}$$ for any ordinals $\alpha<\beta<\mathfrak b$.

Consequently, the family $\{E_\alpha\}_{\alpha\in\mathfrak b}$ is a cellular locally finite ball structure, which generates a cellular locally finite coarse structure $\E$ on $\w$ of weight $w(\E)\le\mathfrak b$.

We claim that the coarse structure $\E$ is inseparated. Given any infinite sets $A,B\subseteq \w$, we should find an entourage $E\in\E$ such that $E[A]\cap E[B]$ is infinite. Choose an increasing function $g:\w\to\w$ such that for any $x\in\w$ the interval $[x,g(x))$ contains numbers $a<b<a'$ where $a,a'\in A$ and $b\in B$. Find an ordinal $\alpha\in\mathfrak b$ such that $f_\alpha\not\le^* g\circ g$. Then the set $$\Omega=\{x\in\w:x_\alpha(1)<x,\;\;f_\alpha(0)<g(x),\;\; g(x)<f_\alpha(x)\}$$ is infinite.
% Since $D$ is discrete, for the entourage $E_\alpha$ there exists a finite set $F\subset\w$ such that $D\cap E_\alpha[x]=\{x\}$ for any $x\in D\setminus F$. Find $l\in\w$ such that $F\subset [0,x_\alpha(l))$ and $f_\alpha(0)<x_\alpha(l)$. We claim that 
%\begin{equation}\label{eq:hard}
%|D\cap [x_\alpha(k),x_\alpha(k+1))|\le 1\mbox{ \ for any $k\ge l$.}
%\end{equation}Indeed, if $D\cap [x_\alpha(k),x_\alpha(k+1))$ is not empty, then it contains some point $x$ for which $E_\alpha[x]= [x_\alpha(k),x_\alpha(k+1))$ and hence $|D\cap [x_\alpha(k),x_\alpha(k+1))|=|D\cap E_\alpha[x]|=|\{x\}|=1$.

 %Since $f_\alpha\not\le^* g\circ g$, there exists a point $x\in \w\setminus [0,x_\alpha(l+1))$ such that $g(g(x))<f_\alpha(k)$. 
% For every $x\in\Omega$, the monotonicity of $g$ implies $x\le g(x)$ and $g(x)\le g(g(x))<f_\alpha(x)$.

Given any $x\in\Omega$, find a unique number $n\in\w$ such that $f_\alpha^n(0)\le g(x)< f_\alpha^{n+1}(0)$. Since $x\in\Omega$, the number $n$ is positive.
 
 If $f_\alpha^n(0)\le x$, then $[x,g(x)]\subseteq [f_\alpha^n(0),f_\alpha^{n+1}(0))$. 
 If $x<f^n_\alpha(0)$, then $f^{n-1}_\alpha(0)\le x$ (in the opposite case, $x<f_\alpha^{n-1}(0)$ would imply $f_\alpha(x)<f_\alpha^n(0)\le g(x)$, which is not true). Then $[x,g(x)]\subseteq [f_\alpha^{n-1}(0),f_\alpha^{n+1}(0))$. In both cases we conclude that $[x,g(x)]\subseteq [f_\alpha^{n-1}(0),f_\alpha^{n+1}(0))$. Let $k\in\w$ be the smallest number such that $f_\alpha^{n+1}(0)< x_\alpha(k+1)$. Since $x_\alpha(1)\le x\le g(x)<f_\alpha^{n+1}(0)< x_\alpha(k+1)$, the number $k$ is positive and the number $x_\alpha(k-1)$ is well-defined.

 We claim that $x_{\alpha}(k-1)<f_\alpha^{n-1}(0)$. Assuming that $x_\alpha(k-1)\ge f^{n-1}_\alpha(0)$, we would apply the inductive condition (d) and conclude that $x_{\alpha}(k)> f^{n+1}_\alpha(0)$, which contradicts the choice of the number $k$. This contradiction shows that $x_\alpha(k-1)<f_\alpha^{n-1}(0)$ and hence $[x,g(x)]\subseteq [f_\alpha^{n-1}(0),f_\alpha^{n+1}(0))\subseteq [x_\alpha(k-1),x_\alpha(k+1))$. By the choice of the function $g$, there are points $a_x,a_x'\in A$ and $b_x\in B$ such that $x<a_x<b_x<a_x'<g(x)$. If $b_x<x_\alpha(k)$, then $a_x,b_x\in [x_\alpha(k-1),x_\alpha(k))\subset E_\alpha[A]\cap E_\alpha[B]$. If $x_\alpha(k)\le b_x$, then $b_x,a_x'\in [x_\alpha(k),x_\alpha(k+1))\subset E_\alpha[A]\cap E_\alpha[B]$. In both cases the intersection $E_\alpha[A]\cap E_\alpha[B]\cap[x,g(x))$ is not empty, which implies that $E_\alpha[A]\cap E_\alpha[B]$ is infinite, and the sets $A,B$ are not $\E$-separated. 
 
 This means that the cellular locally finite coarse structure $\E$ on $\w$ is inseparated and hence $\Asigma^\circ_{\w_1}\le w(\E)\le\mathfrak b$. Applying Lemma~\ref{l:DS=b}, we obtain that $$\mathfrak b=\Asigma(\IE_{\w_1})\le\Asigma^\circ_{\w_1}\le\mathfrak b$$ and hence $\Asigma_{\w_1}^\circ=\mathfrak b$.
 \end{proof}

\begin{lemma}\label{l:t=d} If $\mathfrak t=\mathfrak d$, then  $\Alambda^\circ_{\w_1}=\mathfrak d$. 
\end{lemma}

\begin{proof}  By the definition of the cardinal $\mathfrak d$, there exists a set $\{f_\alpha\}_{\alpha\in \mathfrak d}\subseteq \w^\w$ such that for any function $f\in\w^\w$ there exists $\alpha\in\mathfrak d$ such that $f\le f_\alpha$. We lose no generality assuming that each function $f_\alpha$ is strictly increasing and $x<f_\alpha(x)$ for every $x\in \w$. For every $n\in\w$ denote by $f^n_\alpha$ the $n$-th iteration of $f_\alpha$ and observe that $f^0(0)=0$ and $f^n(0)<f^{n+1}(0)$ for any $n\in\w$.

%For an ordinal $\alpha$ its {\em integer part} $\lfloor \alpha\rfloor$ is the unique finite ordinal such that $\alpha=\beta+\lfloor \alpha\rfloor$ for some limit ordinal $\beta$. For a function $x\in\w^\w$ by $x[\w]$ we denote the set $\{x(n):n\in\w\}$.

Repeating the argument of the proof of Lemma~\ref{l:t=b}, we can inductively construct a family $\{x_\alpha\}_{\alpha\in\mathfrak d}\subseteq\w^\w$ of strictly increasing functions such that for every ordinals $\alpha\in\mathfrak d$ the following conditions are satisfied:
\begin{itemize}
\item[(a)] $x_\alpha(0)=0$ and $x_\alpha(1)>\lfloor\alpha\rfloor$; 
\item[(b)] $x_\alpha[\w]\subseteq^* x_\gamma[\w]$ for every $\gamma<\alpha$;
\item[(c)]  if $\alpha=\beta+1$ for some ordinal $\beta$, then $x_{\alpha}[\w]\subseteq x_\beta[\w]$;
\item[(d)] for every number $k,n\in \w$ with $f_\alpha^n(0)\le x_\alpha(k)$ we have $f_\alpha^{n+2}(0)<x_\alpha(k+1)$.
\end{itemize}

After completing the inductive construction, for every $\alpha\in\mathfrak d$, consider the cellular locally finite entourage $$E_\alpha=\bigcup_{n\in\w}[x_\alpha(n),x_\alpha(n+1))^2$$on $\w$.
The inductive conditions (a)--(c) imply that $$\{0,\dots,\lfloor\alpha\rfloor\}^2\subset  E_\alpha\subseteq^* E_\beta\subseteq E_{\beta+1}$$ for any ordinals $\alpha<\beta<\mathfrak d$.

Consequently, the family $\{E_\alpha\}_{\alpha\in\mathfrak d}$ is a cellular locally finite ball structure, which generates a cellular locally finite coarse structure $\E$ on $\w$ of weight $w(\E)\le\mathfrak d$.

We claim that the coarse structure $\E$ is large. Given any infinite set $A\subseteq \w$, we should find an entourage $E\in\E$ such that $E[A]=\w$. Choose an increasing function $g:\w\to\w$ such that for any $x\in\w$ the intersection $A\cap [x,g(x))$ is not empty. Find an ordinal $\alpha\in\mathfrak d$ such that $g\le f_\alpha$. We claim that $E_\alpha[A]=\w$. This equality will follow as soon we  show that for every $k\in\w$ the intersection $A\cap [x_\alpha(k),x_\alpha(k+1))$ is not empty. Given any $k\in\w$, find the smallest number $n\in\w$ such that $x_\alpha(k)<f_\alpha^n(0)$. It follows that $n>0$ and $f_\alpha^{n-1}(0)\le x_\alpha(k)$. Then the condition (d) ensures that $f_\alpha^{n+1}(0)<x_\alpha(k+1)$. Now observe that $g(x_\alpha(k))\le f_\alpha(x_\alpha(k))<f_\alpha(f_\alpha^n(0))=f_\alpha^{n+1}(0)<x_\alpha(k+1)$. The choice of the function $g$ ensures that the intersection
$$A\cap [x_\alpha(k),g(x_\alpha(k))]\subseteq A\cap [x_\alpha(k),x_\alpha(k+1))$$is not empty. Therefore, the set $A$ is $\E$-large and the cellular locally finite coarse structure $\E$ is large, which implies $\Alambda^\circ_{\w_1}\le w(\E)\le\mathfrak d$. Applying Claim~\ref{cl:4.7}, we obtain that $$\mathfrak d=\Alambda(\IE_{\w_1})\le\Alambda^\circ_{\w_1}\le\mathfrak d$$ and finally $\Alambda_{\w_1}^\circ=\mathfrak d$.
\end{proof}
\end{proof}

Lemma~\ref{l:5.4} and Propositions~\ref{p:perturb-ind}, \ref{p:perturb} imply 

\begin{corollary} There are $2^{\mathfrak c}$ indiscrete cellular finitary coarse structures on $\w$.
\end{corollary}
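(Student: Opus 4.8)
The plan is to start from a single maximal cellular finitary coarse structure and perturb it by the $2^{\mathfrak c}$ many free ultrafilters concentrated on one fixed infinite set. First I would invoke Lemma~\ref{l:5.4} to fix a maximal cellular finitary coarse structure $\E$ on $\w$; by that lemma $\E$ is indiscrete, and being finitary it is in particular locally finite. Since an indiscrete coarse space is unbounded by definition, the whole space $\w$ cannot be $\E$-discrete (otherwise indiscreteness would force $\w$ to be bounded). Unfolding the definition of $\E$-discreteness, this yields an entourage $E_0\in\E$ for which the set $S=\{x\in\w:E_0(x)\ne\{x\}\}$ is $\E$-unbounded; as $\E$ is locally finite, $\E$-unbounded means infinite, so $S\in[\w]^\w$.

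Next I would pass to ultrafilter perturbations. For every free ultrafilter $\varphi$ on $\w$ the coarse structure $\E_\varphi$ is again cellular and finitary (recall that filter perturbations preserve both properties), and by Proposition~\ref{p:perturb-ind} it is indiscrete, since $(\w,\E)$ is locally finite and indiscrete. Thus each $\E_\varphi$ is an indiscrete cellular finitary coarse structure on $\w$, and it only remains to produce $2^{\mathfrak c}$ pairwise distinct ones.

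Here is where Proposition~\ref{p:perturb} enters. Let $\mathcal U$ be the set of all free ultrafilters $\varphi$ on $\w$ with $S\in\varphi$. Since $S$ is infinite, the free ultrafilters on $\w$ containing $S$ correspond to the free ultrafilters on the countably infinite set $S$, of which there are $2^{\mathfrak c}$; hence $|\mathcal U|=2^{\mathfrak c}$. For any two distinct $\varphi,\psi\in\mathcal U$ we have $S=\{x\in\w:E_0(x)\ne\{x\}\}\in\varphi\cap\psi$ with $E_0\in\E$, so Proposition~\ref{p:perturb} gives $\E_\varphi\ne\E_\psi$. Therefore $\{\E_\varphi:\varphi\in\mathcal U\}$ is a family of $2^{\mathfrak c}$ distinct indiscrete cellular finitary coarse structures on $\w$; since every coarse structure on $\w$ is a subfamily of $\mathcal P(\w\times\w)$, there are at most $2^{\mathfrak c}$ of them in total, so this count is exact.

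The only genuinely substantive step is extracting the infinite set $S$ of non-fixed points from indiscreteness, as this is what supplies the hypothesis of Proposition~\ref{p:perturb}; everything else is bookkeeping. I expect no real obstacle beyond the care needed to guarantee that $S$ belongs to \emph{both} ultrafilters, which is arranged precisely by restricting attention to the $2^{\mathfrak c}$ free ultrafilters extending $S$.
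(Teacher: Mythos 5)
Your proof is correct and follows exactly the paper's route: the paper derives this corollary directly from Lemma~\ref{l:5.4} together with Propositions~\ref{p:perturb-ind} and \ref{p:perturb}, which is precisely your combination of a maximal cellular finitary (hence indiscrete) coarse structure with its ultrafilter perturbations. Your explicit extraction of the infinite set $S=\{x\in\w:E_0(x)\ne\{x\}\}$ and the restriction to the $2^{\mathfrak c}$ free ultrafilters containing $S$ merely spells out the detail the paper leaves implicit in verifying the hypothesis of Proposition~\ref{p:perturb}.
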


By analogy, Lemma~\ref{l:ultraexist} and Propositions~\ref{p:perturb-ultra}, \ref{p:perturb} imply

\begin{corollary}\label{c:2c} Under $\Adelta^\circ_\w=\mathfrak c$ there are $2^{\mathfrak c}$ inseparated cellular finitary coarse structures on $\w$.
\end{corollary}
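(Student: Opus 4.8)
The plan is to combine the single inseparated cellular finitary coarse structure produced by Lemma~\ref{l:ultraexist} with the ultrafilter perturbation machinery of Propositions~\ref{p:perturb-ultra} and~\ref{p:perturb}. Assuming $\Adelta^\circ_\w=\mathfrak c$, Lemma~\ref{l:ultraexist} yields an inseparated cellular finitary coarse structure $\E$ on $\w$. Since inseparated coarse spaces are indiscrete and the space $(\w,\E)$ is unbounded (indeed, on an infinite set a finitary, hence locally finite, coarse structure has the finite sets as its bornology, so $\w$ itself is $\E$-unbounded), the set $\w$ fails to be $\E$-discrete. Consequently there is an entourage $E\in\E$ for which the set $U=\{x\in\w:E(x)\ne\{x\}\}$ is $\E$-unbounded, and therefore infinite, because bounded sets in a locally finite coarse space are exactly the finite ones.

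Next I would fix this infinite set $U$ and restrict attention to the family $\Phi$ of all free ultrafilters $\varphi$ on $\w$ satisfying $U\in\varphi$. Since $U$ is infinite, a standard computation in the Stone--\v{C}ech compactification gives $|\Phi|=2^{\mathfrak c}$. For every $\varphi\in\Phi$ I form the filter perturbation $\E_\varphi$. As recorded just before Proposition~\ref{p:perturb-ind}, filter perturbations preserve both cellularity and finitariness, so each $\E_\varphi$ is again cellular and finitary; and since $\E$ is inseparated and locally finite, Proposition~\ref{p:perturb-ultra} guarantees that every $\E_\varphi$ is inseparated. Thus $\{\E_\varphi:\varphi\in\Phi\}$ is a family of inseparated cellular finitary coarse structures on $\w$, indexed by a set of size $2^{\mathfrak c}$.

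It then remains to see that distinct ultrafilters yield distinct structures. Given distinct $\varphi,\psi\in\Phi$, both contain $U$, so the fixed entourage $E\in\E$ satisfies $\{x\in\w:E(x)\ne\{x\}\}=U\in\varphi\cap\psi$; Proposition~\ref{p:perturb} then delivers $\E_\varphi\ne\E_\psi$. Hence $\varphi\mapsto\E_\varphi$ is injective on $\Phi$, producing $2^{\mathfrak c}$ pairwise distinct inseparated cellular finitary coarse structures. Finally the count is sharp from above: since $\IE_\w$ has cardinality $\mathfrak c$ (a finitary entourage with bound $n$ is encoded by a function $\w\to[\w]^{\le n}$ into a countable set), every finitary coarse structure is a subfamily of a set of size $\mathfrak c$, so there are at most $2^{\mathfrak c}$ of them altogether.

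The only genuinely delicate point is the first extraction step. Proposition~\ref{p:perturb} requires one \emph{fixed} entourage $E\in\E$ whose support $U$ lies in the intersection of the two ultrafilters, and the same $E$ must serve \emph{simultaneously} for every pair if one wants $2^{\mathfrak c}$ distinct outputs. The resolution is precisely to commit to a single infinite $U$ supplied by indiscreteness and then run the whole construction over the $2^{\mathfrak c}$ ultrafilters containing $U$; once this is done, the separation of the perturbed structures follows immediately and uniformly from Proposition~\ref{p:perturb}.
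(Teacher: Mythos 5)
Your proposal is correct and follows essentially the same route as the paper, whose proof of Corollary~\ref{c:2c} is exactly the combination of Lemma~\ref{l:ultraexist} with Propositions~\ref{p:perturb-ultra} and \ref{p:perturb}. You merely make explicit the detail the paper leaves implicit: fixing one entourage $E$ with infinite support $U$ (available since inseparated locally finite spaces are indiscrete, so $\w$ is not $\E$-discrete) and ranging over the $2^{\mathfrak c}$ free ultrafilters containing $U$ so that Proposition~\ref{p:perturb} applies uniformly to every pair.
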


\begin{remark} By Corollary 3.9 in \cite{P-Dynamics}, there exist $2^{\mathfrak c}$ inseparated finitary coarse structures on $\w$ which are not large.
\end{remark}

\begin{remark} Corollary~\ref{c:2c} provides a (consistent) negative answer to Question 5.2 in \cite{PPva}.
\end{remark}

\begin{remark} The cardinal $\Adelta^\circ_\w$ was applied in the paper \cite{BPh} devoted to constructing (cellular) finitary coarse spaces with a given Higson corona.
\end{remark}

\section{Large finitary coarse structures on $\w$ under $\mathfrak b=\mathfrak c$}\label{s:hyper}

By Corollary~\ref{c:2c}, the equality $\Adelta_\w^\circ=\mathfrak c$ implies the existence of $2^{\mathfrak c}$ inseparated cellular finitary coarse structures on $\w$. In this section we use the equality $\mathfrak b=\mathfrak c$ (which holds under Martin's Axiom)  to construct continuum many large finitary coarse structures on $\w$.

Let $T$ be an entourage on a set $X$. An entourage $E$ on $X$ is called {\em $T$-transversal} if there exists a finite set $B\subseteq X$ such that $T(x)\cap E(x)=\{x\}$ for any $x\in X\setminus B$. A family $\E$ of entourages on $X$ is defined to be {\em $T$-transversal\/} if each entourage $E\in\E$ is $T$-transversal. 

Observe that an entourage $T$ is $T$-transversal if and only if $T$ is trivial.

\begin{theorem}\label{t:hyperr} Let $T$ be a locally finite entourage on a countable set $X$ and $\E_0$ be a $T$-transversal finitary coarse structure on $X$. If $w(\E_0)<\mathfrak b=\mathfrak c$, then there exists a large $T$-transveral finitary coarse structure $\E$ on $X$ such that $\E_0\subseteq\E$.
\end{theorem}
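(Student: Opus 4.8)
The plan is to mimic the transfinite construction of Lemma~\ref{l:ultratrans}, replacing the inseparatedness bookkeeping by a largeness one and carrying the extra constraint of $T$-transversality. First I would reduce largeness to a workable form: by Proposition~\ref{p:hyper} a coarse space is large if and only if for any two disjoint unbounded sets $A,B$ there is an entourage $E$ with $A\subseteq E[B]$ and $B\subseteq E[A]$. Since the structure $\E$ I am building will be finitary, hence locally finite, its bornology is exactly $[\w]^{<\w}$, so ``unbounded'' means ``infinite''. Thus it suffices to construct a finitary $T$-transversal coarse structure $\E\supseteq\E_0$ such that for every pair of disjoint infinite sets $A,B\subseteq X$ there is $E\in\E$ with $A\subseteq E[B]$ and $B\subseteq E[A]$.

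I would fix an enumeration $\{(A_\alpha,B_\alpha)\}_{\alpha\in\mathfrak c}$ of all pairs of disjoint infinite subsets of $X$ and build an increasing chain $(\E_\alpha)_{\alpha\in\mathfrak c}$ of finitary $T$-transversal coarse structures, starting from the given $\E_0$, with $w(\E_\alpha)\le|w(\E_0)+\alpha|<\mathfrak c$, and such that $\E_\alpha$ contains an entourage $D_\alpha$ with $A_\alpha\subseteq D_\alpha[B_\alpha]$ and $B_\alpha\subseteq D_\alpha[A_\alpha]$. At limit steps I would just take unions (the weight bound and $T$-transversality pass to increasing unions), exactly as in Lemma~\ref{l:ultratrans}. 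Setting $\E=\bigcup_{\alpha}\E_\alpha$ then finishes the theorem via the reduction above.

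The heart of the argument is the successor step. Given $\E_{<\alpha}=\bigcup_{\beta<\alpha}\E_\beta$ of weight $<\mathfrak b$ and a pair $(A,B)=(A_\alpha,B_\alpha)$, I want a degree-two entourage $D_\alpha$ realizing $A\subseteq D_\alpha[B]$ and $B\subseteq D_\alpha[A]$ for which the coarse structure $\E_\alpha$ generated by $\E_{<\alpha}\cup\{D_\alpha\}$ stays finitary and $T$-transversal. I would take $D_\alpha=\{(x,y):y\in\{x,\xi_\alpha(x)\}\}$ for an involution $\xi_\alpha$ that swaps $A$ with $B$ by a bijection $\phi:A\to B$ and fixes the rest of $X$; then $D_\alpha[A]=A\cup B\supseteq B$ and $D_\alpha[B]=A\cup B\supseteq A$, as required, and $D_\alpha$ is cellular of degree $2$ (so $D_\alpha\circ D_\alpha=D_\alpha$). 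Finitarity of $\E_\alpha$ is automatic, since compositions of finitary entourages are finitary. The matching $\phi$ itself exists by a back-and-forth argument: as $T$ is locally finite, each point forbids only finitely many partners (those in $T^{\pm}$), so every vertex of the bipartite ``allowed'' graph on $A\sqcup B$ has cofinite degree, and a perfect matching can be built greedily.

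The main obstacle --- and the only place where $\mathfrak b=\mathfrak c$ is used --- is proving that $\E_\alpha$ is $T$-transversal. This is delicate because $T$-transversality is \emph{not} preserved under composition in general (for instance, for $T=\{(x,y):|x-y|\le 1\}$ on $\w$, the ``shift by $2$'' and ``shift by $3$'' entourages are both $T$-transversal but their product contains a shift by $1$). So $\phi$ must be chosen with great care. Using $w(\E_{<\alpha})<\mathfrak b$ I would fix a base $\mathcal B$ of $\E_{<\alpha}$ of size $<\mathfrak b$ and, writing $\|E\|(n)=\max(\{n\}\cup E(n)\cup E^{-1}(n))$, choose an increasing $g\in\w^\w$ dominating $\{\|F\|:F\in\mathcal B\}\cup\{\|T\|\}$ in the $\le^*$ order. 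Because $D_\alpha$ is idempotent and $\E_{<\alpha}$ is closed under composition, every entourage of $\E_\alpha$ lies under an alternating word $F_0 D_\alpha F_1\cdots D_\alpha F_m$ with $F_j\in\mathcal B$, so it suffices to make all such words $T$-transversal. The plan is to arrange the matched pairs of $\phi$ into fast-growing, widely separated blocks whose gaps eventually exceed every value of $g$, so that a bounded-reach entourage $F\in\mathcal B$ cannot jump across a gap and each word confines a point to its block; local finiteness of $T$ then keeps $T(x)$ inside the same region for large $x$, and the escaping choice of $\phi$ prevents any word from mapping a large $x$ into $T(x)\setminus\{x\}$. Making this confinement estimate work \emph{uniformly over all word lengths} --- a single application of $D_\alpha$ is harmless, but compositions can in principle zig-zag back toward $x$ --- is the technical crux; I expect to carry it out by induction on the length of the word, tracking at each step the block containing the image together with the fact that all $F_j$ and $T$ have reach below the block gaps.
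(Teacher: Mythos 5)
Your global architecture is exactly the paper's: reduce largeness to disjoint infinite pairs via Proposition~\ref{p:hyper}, enumerate the pairs, build an increasing chain of $T$-transversal finitary coarse structures of weight $<\mathfrak b=\mathfrak c$ with unions at limit stages, and at each successor stage adjoin a degree-two cellular entourage $D_\alpha$ coming from an involution that swaps $A_\alpha$ with $B_\alpha$; your use of $w(\E_{<\alpha})<\mathfrak b$ to dominate a small base by a single function parallels the paper's Lemmas~\ref{l:b} and~\ref{l:M}. But the entire content of the theorem sits in the step you defer: proving that every alternating word $F_0D_\alpha F_1\cdots D_\alpha F_m$ (equivalently, the paper's Lemma~\ref{l:main}) stays $T$-transversal, and the plan you sketch for this step does not work. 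The block-confinement picture is unrealizable for arbitrary disjoint infinite $A,B$: the pairs of $\phi$ must exhaust all of $A\cup B$, and if, say, $B$ is very sparse and $A=\w\setminus B$, then no decomposition of $\w$ into intervals balances the counts of $A$- and $B$-points, so the matching necessarily contains long-range pairs; moreover when $A\cup B$ is cofinite there are no gaps in the ground set at all. Consequently trajectories of words genuinely traverse the whole space, and no choice of $\phi$ makes ``each word confine a point to its block'' true.

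What the paper does instead is not confinement but monotone escape. Lemma~\ref{l:v} builds the involution $v$ by an asymmetric back-and-forth so that the \emph{larger} element of each pair $\{x,v(x)\}$ lies in a sparse set $A'\cup B'$ whose $L^2$-balls are pairwise disjoint and order-separated (conditions (c),(e),(f), with $L$ a monotone entourage $\subseteq^*$-dominating the base and $T$), and $v(x)\notin L^3(x)$ for $x\in A\cup B$. Then, along a putative trajectory $x_0,\dots,x_{n+1}$ witnessing a failure of transversality, the $D$-jumps first strictly descend and, after the first ascent, ascend forever through points of $A'\cup B'$ with disjoint ordered $L^2$-balls (Claims~\ref{cl:odd}--\ref{cl:i>s}), so the endpoint can never return to $L(x_0)\supseteq T(x_0)$; a four-case endgame on the turning point $s$ yields the contradiction. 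Note also that this analysis needs the subword-deletion device (the entourages $E_{i,j}$ together with the inductive $T$-transversality of $\mathcal B_{n-1}$) to ensure the trajectory points are pairwise distinct and avoid the finite exceptional set; your proposed ``induction on the length of the word'' has no analogous hypothesis to invoke. So your reduction and transfinite bookkeeping are correct, but the essential lemma is missing, and the route you indicate toward it would fail precisely in the cases where $A$ and $B$ have incomparable densities.
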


For the proof of this theorem we need five lemmas. 

%We recall that a permutation $f$ of a set $X$ is called an {\em involution} if $f\circ f$ is the identity map of $X$. 

\begin{lemma}\label{l:v} For any infinite disjoint sets $A,B\subset\w$ and any locally finite entourage $E=E^{-1}$ on $\w$ there exist infinite subsets $A'\subseteq A$ and $B'\subseteq B$ and an involution $v$ on $\w$ such that 
\begin{enumerate}
\item $v(A)=B$;
\item $v(x)=x$ if and only if $x\notin A\cup B$;
\item $v(x)<x$ if and only if $x\in A'\cup B'$;
\item $v(x)\notin E(x)$ for any $x\in A\cup B$;
\item $E(x)\cap E(y)=\emptyset$  for any distinct points $x,y$ in $A'\cup B'$.  
\end{enumerate}
\end{lemma}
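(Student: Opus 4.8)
The plan is to obtain $v$ as a fixed-point-free involution of $A\cup B$ that matches $A$ bijectively with $B$ (and equals the identity off $A\cup B$), so that conditions (1) and (2) hold by construction. Observe first that once such a matching is fixed, condition (3) \emph{forces} the choice of $A'$ and $B'$: the set $\{x:v(x)<x\}$ is exactly the set of larger endpoints of the matched pairs, so I must take $A'=A\cap\{x:v(x)<x\}$ and $B'=B\cap\{x:v(x)<x\}$. Thus the real content is to arrange the matching so that (4) each pair $\{x,v(x)\}$ is $E$-far, i.e. $v(x)\notin E(x)$, (5) the larger endpoints have pairwise disjoint $E$-balls, and moreover both $A'$ and $B'$ turn out infinite, all while every element of $A\cup B$ gets matched.

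I would enumerate $A=\{a_n:n\in\w\}$ and $B=\{b_n:n\in\w\}$ in strictly increasing order and build the matching by recursion in \emph{rounds}, adding two pairs per round. In round $n$, let $a$ be the least element of $A$ not yet matched and pick a partner $w_B\in B$ that is unmatched and so large that $w_B>a$, $w_B\notin E(a)$, and $E(w_B)$ is disjoint from the union of the $E$-balls of all larger endpoints chosen so far; match $a$ with $w_B$. Then let $b$ be the least element of $B$ still unmatched and, symmetrically, pick an unmatched $w_A\in A$ with $w_A>b$, $w_A\notin E(b)$, and $E(w_A)$ disjoint from $E(w_B)$ and from all previously chosen larger endpoints; match $b$ with $w_A$. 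Since $w_B>a$ and $w_A>b$, the larger endpoint of each new pair is the freshly chosen partner, so round $n$ contributes $w_B$ to $B'$ and $w_A$ to $A'$; hence both $A'$ and $B'$ are infinite, and every larger endpoint is one of these fresh partners.

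Each such choice is available because $E$ is locally finite and symmetric: every ball $E(x)$ is finite, so the set of forbidden values (those lying in $E(a)$, or those whose ball meets the finite union of previously used balls, i.e. lying in $E[S]$ for the relevant finite set $S$) is finite, while $A$ and $B$ are infinite, leaving infinitely many admissible partners. Because in each round the least unmatched element of each side is matched, the least unmatched index strictly increases on both sides, so every element of $A\cup B$ is eventually matched and (1), (2) hold; condition (4) holds pair-by-pair from $w_B\notin E(a)$, $w_A\notin E(b)$ and $E=E^{-1}$, and condition (5) holds since each new larger endpoint was chosen with $E$-ball disjoint from all earlier ones. The step I would flag as the main obstacle is reconciling condition (3) with the demand that both $A'$ and $B'$ be infinite while still exhausting all of $A\cup B$; the two-pairs-per-round design is precisely what resolves this, by guaranteeing that the larger endpoint is always the fresh large partner, so the larger endpoints split evenly between $A$ and $B$.
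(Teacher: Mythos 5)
Your proof is correct and follows essentially the same back-and-forth construction as the paper: alternately match the least unmatched element of $A$ (resp.\ of $B$) with a larger, unmatched, $E$-avoiding partner on the other side, so that the larger endpoints form the infinite sets $B'$ and $A'$ with pairwise disjoint $E$-balls. The only cosmetic difference is that the paper first fixes a set $C\subseteq A\cup B$ with pairwise disjoint $E$-balls and infinite trace on both $A$ and $B$, drawing all large partners from $C$, whereas you enforce the ball-disjointness greedily at each step; both variants are justified by the local finiteness and symmetry of $E$, which keep the forbidden set finite at every stage.
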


\begin{proof} Choose a set $C\subseteq A\cup B$ such that
\begin{itemize}
\item[(i)] $C\cap A$ and $C\cap B$ are infinite;
\item[(ii)] $E(x)\cap E(y)=\emptyset$ for any distinct point $x,y\in C$;
%\item[(iii)] $|[0,n]\cap C|\le\frac12\min\{|[0,n]\cap A|,|[0,n]\cap B|\}$ for every $n\in\w$.
\end{itemize}

Inductively we shall construct two sequences of natural numbers $(a_n)_{n\in\w}$ and $(b_n)_{n\in\w}$ such that for every $n\in\w$ the following conditions are satisfied.
\begin{itemize}
\item $a_{2n}=\min \big(A\setminus\{a_{k}\}_{k<2n}\big)$;
\item $b_{2n}\in (C\cap B)\setminus \{b_{k}\}_{k<2n}\big)$, $b_{2n}\notin E(a_{2n})$, and $b_{2n}>a_{2n}$;
\item  $b_{2n+1}=\min (B\setminus \{b_{k}\}_{k\le 2n})$;
\item $a_{2n+1}\in(C\cap A)\setminus \{a_{k}\}_{k\le 2n}$, $a_{2n+1}\notin E(b_{2n+1})$ and $a_{2n+1}>b_{2n+1}$.
\end{itemize}

The choice of the points $a_n,b_n$ guarantees that $\{a_n\}_{n\in\w}\subseteq A$ and $\{b_n\}_{n\in\w}\subseteq B$. Let us show that $\{a_n\}_{n\in\w}=A$ and $\{b_n\}_{n\in\w}=B$.

Assuming that $A\setminus\{a_n\}_{n\in\w}$ is not empty, consider the smallest element $s$ of the set $A\setminus\{a_n\}_{n\in\w}$. The definition of the sequence $(a_{2k})_{k\in\w}$ guarantees that it is strictly increasing.  Consequently, $$s\le a_{2s}<a_{2s+2}=\min \big(A\setminus\{a_k\}_{k<2s+2}\big)\le \min\big(A\setminus \{a_k\}_{k\in\w})=s,$$which is a desired contradiction. By analogy we can prove that $\{b_n\}_{n\in\w}=B$.
\smallskip

Now consider the involution $v:\w\to\w$ defined by the formula
$$
v(x)=\begin{cases}
b_n&\mbox{if $x=a_n$ for some $n\in\w$};\\
a_n&\mbox{if $x=b_n$ for some $n\in\w$};\\
x&\mbox{otherwise}.
\end{cases}
$$
It is easy to see that the involution $v$ and the sets $A'=\{a_{2n+1}\}_{n\in\w}$ and $B'=\{b_{2n}\}_{n\in\w}$ have the properties, required in Lemma~\ref{l:v}.
\end{proof}

Let $\ww$ denote the subset of $\w^\w$ consisting of monotone unbounded functions. We shall need the following (known) fact.

\begin{lemma}\label{l:coin} For any subset $F\subseteq\ww$ of cardinality $|F|<\mathfrak b$, there exists a function $g\in\ww$ such that $g\le^* f$ for every $f\in F$.
\end{lemma}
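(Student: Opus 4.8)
The plan is to reduce this ``lower bounding'' statement to the defining ``upper bounding'' property of $\mathfrak b$ (every subfamily of $\w^\w$ of cardinality $<\mathfrak b$ is $\le^*$-bounded from above) by passing to generalized inverses. Looking for a monotone unbounded $g$ that lies eventually \emph{below} every $f\in F$ is, after inverting, the same as looking for a single function lying eventually \emph{above} a family of monotone unbounded functions, and the latter is exactly what the inequality $|F|<\mathfrak b$ provides.

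Concretely, first I would attach to each $f\in\ww$ its generalized inverse $f^\flat\in\ww$ defined by $f^\flat(n):=\min\{k\in\w:f(k)>n\}$; this is well defined because $f$ is unbounded, and it is again monotone non-decreasing and unbounded. I would then record two elementary properties of the operation $f\mapsto f^\flat$. First, it is an involution: a direct computation using monotonicity shows $f^\flat(j)>k\iff f(k)\le j$, whence $(f^\flat)^\flat=f$ exactly. Second, it reverses the preorder $\le^*$: if $a\le^* b$ with $a(n)\le b(n)$ for all $n\ge N$, then for every $m$ large enough that $a^\flat(m)\ge N$ the value $k_0:=a^\flat(m)$ satisfies $b(k_0)\ge a(k_0)>m$, so $b^\flat(m)\le k_0=a^\flat(m)$; thus $a\le^* b$ implies $b^\flat\le^* a^\flat$.

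With these two facts in hand the proof is immediate. The family $\{f^\flat:f\in F\}\subseteq\ww$ has cardinality $\le|F|<\mathfrak b$, so by the defining property of $\mathfrak b$ it is bounded from above: there is $G\in\w^\w$ with $f^\flat\le^* G$ for all $f\in F$. Replacing $G$ by the function $n\mapsto\max_{k\le n}G(k)$, I may assume $G\in\ww$ (it remains unbounded since it dominates the unbounded function $f^\flat$). Setting $g:=G^\flat\in\ww$ and applying the order-reversing property to $f^\flat\le^* G$ gives $G^\flat\le^*(f^\flat)^\flat=f$, that is, $g\le^* f$ for every $f\in F$, as required.

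The argument has no serious obstacle; the only point demanding care is the bookkeeping with $\le^*$, namely checking that the finitely many exceptions allowed in $a\le^* b$ translate into finitely many exceptions in $b^\flat\le^* a^\flat$ — this is handled by the observation that $a^\flat$ is itself unbounded, so the threshold $a^\flat(m)\ge N$ eventually holds. The conceptual heart of the proof is simply the recognition that the generalized inverse converts the sought lower bound into an upper bound, thereby turning the problem into the standard boundedness property below $\mathfrak b$.
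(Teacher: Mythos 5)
Your proof is correct and takes essentially the same route as the paper's: the paper likewise inverts the problem, choosing for each $f\in F$ a strictly increasing $f^-$ with $f^-\circ f\ge 1_\w$, bounding $\{f^-:f\in F\}$ by a single $h$ via $|F|<\mathfrak b$, and then picking $g\in\ww$ with $h\circ g\le 1_\w$. Your generalized inverse $f^\flat$ is simply a canonical choice of such an approximate inverse, and your order-reversal lemma ($a\le^* b\Rightarrow b^\flat\le^* a^\flat$, together with $(f^\flat)^\flat=f$) plays exactly the role of the paper's composition chain $f^-\circ g\le^* h\circ g\le 1_\w\le f^-\circ f$.
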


\begin{proof} Let $1_\w$ denote the identity function of the set $\w$.  For every $f\in F$, choose a strictly increasing function $f^-\in\w^\w$ such that $f^-\circ f\ge 1_\w$.

By the definition of the cardinal $\mathfrak b>|F|$, there exists an increasing function $h\in\w^\w$ such that $f^-\le^* h$ for all $f\in F$. Choose a function $g\in\ww$ such that $h\circ g\le 1_\w$. Then for any $f\in F$ the inequality $f^-\le^* h$ implies $f^-\circ g\le^* h\circ g\le 1_\w\le f^-\circ f$ and hence $g\le^* f$ (as $f^-$ is strictly increasing).
\end{proof}

\begin{lemma}\label{l:b} Let $\mathcal E$ be a family of locally finite entourages on $\w$. If $|\mathcal E|<\mathfrak b$, then there exists a locally finite entourage $F$ on $\w$ such that $E\subseteq^* F$ for all $E\in\E$.
\end{lemma}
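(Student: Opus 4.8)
The plan is to reduce each locally finite entourage to a single function on $\w$ and then dominate all of these functions simultaneously using the cardinal $\mathfrak b$. For every $E\in\mathcal E$ define $\varphi_E\in\w^\w$ by $\varphi_E(n)=\max\big(E^\pm(n)\big)=\max\big(E(n)\cup E^{-1}(n)\big)$; this is well defined because the local finiteness of $E$ makes each ball $E^\pm(n)$ finite, and $n\in E^\pm(n)$ forces $\varphi_E(n)\ge n$. Since $|\{\varphi_E:E\in\mathcal E\}|\le|\mathcal E|<\mathfrak b$, the definition of $\mathfrak b$ yields a single $g\in\w^\w$ with $\varphi_E\le^* g$ for every $E\in\mathcal E$; after replacing $g(n)$ by $\max\{n,g(0),\dots,g(n)\}$ I may assume that $g$ is nondecreasing and $g(n)\ge n$ for all $n$ (this only enlarges $g$, so domination is preserved).

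The key construction is the symmetric entourage
$$F=\big\{(x,y)\in\w\times\w:\max\{x,y\}\le g(\min\{x,y\})\big\}.$$
First I would check that $F$ is a locally finite entourage: it contains $\Delta_\w$ because $x\le g(x)$; it is symmetric since the defining condition is symmetric in $x$ and $y$; and for each fixed $x$ the ball $F(x)$ is contained in $\{0,1,\dots,\max\{x,g(x)\}\}$, because a pair with $y\ge x$ forces $y\le g(x)$ while a pair with $y\le x$ lies below $x$. Hence $F(x)$ is finite, and by symmetry $F^\pm(x)=F(x)$ is finite too.

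Next I would verify $E\subseteq^* F$, that is, $E\setminus F$ is finite, for each fixed $E\in\mathcal E$. Choose $m$ with $\varphi_E(n)\le g(n)$ for all $n\ge m$. Take $(x,y)\in E$ with $a:=\min\{x,y\}\ge m$ and put $b:=\max\{x,y\}$. Then $b\le\varphi_E(a)$: if $x\le y$ then $y\in E(x)$ gives $b=y\le\varphi_E(x)=\varphi_E(a)$, and if $y\le x$ then $x\in E^{-1}(y)$ gives $b=x\le\varphi_E(y)=\varphi_E(a)$. In either case $b\le\varphi_E(a)\le g(a)$, so $(x,y)\in F$. Consequently $E\setminus F\subseteq\{(x,y)\in E:\min\{x,y\}<m\}$, and the latter set is finite, since the pairs with $x<m$ are covered by the finitely many finite balls $E(0),\dots,E(m-1)$ and those with $y<m$ by $E^{-1}(0),\dots,E^{-1}(m-1)$. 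Thus $E\subseteq^* F$ for every $E\in\mathcal E$, as required.

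The only genuinely delicate point is the shape of $F$. A naive choice such as $\{(x,y):y\le g(x)\}\cup\Delta_\w$ does absorb almost all of each $E$, but it is not locally finite, because its inverse balls $\{y:x\le g(y)\}$ are infinite (every large $y$ satisfies $x\le g(y)$). Demanding that \emph{both} endpoints be bounded by $g$ applied to the smaller one — the ``$\max\le g\circ\min$'' condition — is what simultaneously secures symmetry, local finiteness, and the almost-inclusion, and is the crux of the argument. Note that this lemma uses only the domination form of $\mathfrak b$ and does not require Lemma~\ref{l:coin}.
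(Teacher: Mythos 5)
Your proof is correct, and it diverges from the paper's in one instructive way. Both arguments share the same skeleton: encode each $E\in\E$ by functions bounding its balls, use $|\E|<\mathfrak b$ to dominate all of them at once, and build $F$ from the dominating data. But the paper secures local finiteness of $F$ differently: it assigns to each $E$ \emph{two} functions, $\beta_E(x)=\max E(x)$ dominated from above by some $\beta$, and $\alpha_E(x)=\min E(x)$ minorized from below by a monotone \emph{unbounded} $\alpha\in\ww$ obtained from the auxiliary Lemma~\ref{l:coin} (itself another application of $\mathfrak b$), and then takes $F=\bigcup_{x\in\w}\{x\}\times[\alpha(x),\beta(x)]$; the unboundedness of $\alpha$ is exactly what makes the inverse balls $F^{-1}(y)\subseteq\{x:\alpha(x)\le y\}$ finite. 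You instead fold the inverse-ball problem into the shape of $F$ itself: the symmetric condition $\max\{x,y\}\le g(\min\{x,y\})$ gives $F^{-1}(x)=F(x)\subseteq[0,g(x)]$ for free, so a single dominating function suffices and Lemma~\ref{l:coin} is never needed. Your closing remark correctly identifies why the naive one-sided entourage $\{(x,y):y\le g(x)\}$ fails, which is precisely the obstruction the paper's $\alpha$ is designed to handle. The trade-off: the paper's route reuses Lemma~\ref{l:coin} (which it needs anyway elsewhere) and produces $F$ with interval balls, while yours is more self-contained and yields a symmetric $F$; both verifications of $E\setminus F$ being finite are sound, and your reading of $E\subseteq^* F$ as finiteness of $E\setminus F$ matches the paper's usage.
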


\begin{proof}  To every entourage $E\in\E$ assign the functions $\alpha_E:\w\to\w$, $\alpha_E:x\mapsto\min E(x)$, and $\beta_E:\w\to\w$, $\beta_E:x\mapsto\max E(x)$. Since $E$ is locally finite, the functions $\alpha_E,\beta_E$ are well-defined and belong to $\ww$.  

By Lemma~\ref{l:coin} and the definition of $\mathfrak b>|\mathcal E|$, there exist functions $\alpha\in\ww$ and $\beta\in\w^\w$ such that  $\alpha\le^* \alpha_E$ and $\beta_E\le^* \beta$ for all $E\in\E$. Then the entourage $$F=\bigcup_{x\in\w}(\{x\}\times [\alpha(x),\beta(x)])$$ is locally finite and has the required property:
$E\subseteq^* F$ for all $E\in\E$.
\end{proof}

An entourage $E$ on $\w$ is called {\em monotone} if has the following properties:
\begin{itemize}
\item $E$ is locally finite, 
\item $E=E^{-1}$;
\item $E(x)=[\min E(x),\max E(x)]$ for every $x\in \w$;
\item $\max E(x)\le\max E(y)$ for any numbers $x\le y$.
\end{itemize}

\begin{lemma}\label{l:M} Each locally finite entourage $E$ on $\w$ can be enlarged to a  monotone entourage $L$ on $\w$.
\end{lemma}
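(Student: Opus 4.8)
The plan is to read off a monotone entourage from a single nondecreasing ``right-endpoint'' function, rather than trying to repair each ball of $E$ separately. First I would symmetrize: since $E$ is locally finite, so is $E\cup E^{-1}$, and it contains $E$, so I may replace $E$ by $E\cup E^{-1}$ and assume from the start that $E=E^{-1}$. Local finiteness then guarantees that $b(x):=\max E(x)$ is a well-defined natural number for every $x\in\w$, and $b(x)\ge x$ because $x\in E(x)$. I would then pass to the least nondecreasing majorant
$$\beta(x):=\max_{y\le x}b(y),$$
which is finite (a maximum over the finite set $\{0,\dots,x\}$), nondecreasing, and satisfies $\beta(x)\ge b(x)\ge x$ for all $x$.

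With $\beta$ in hand, I would define $L$ directly as a symmetric relation by declaring $(x,y)\in L$ if and only if $\max\{x,y\}\le\beta(\min\{x,y\})$. Symmetry is built in, and $\Delta_\w\subseteq L$ follows from $\beta(x)\ge x$, so $L$ is an entourage. The inclusion $E\subseteq L$ is checked by taking $(x,y)\in E$, writing $m=\min\{x,y\}$ and $M=\max\{x,y\}$: using $E=E^{-1}$ one has $M\in E(m)$, hence $M\le b(m)\le\beta(m)$, so $(x,y)\in L$. It remains to verify the two ``shape'' conditions defining monotonicity. For a fixed $z$ the ball decomposes as
$$L(z)=[z,\beta(z)]\cup\{w<z:\beta(w)\ge z\}.$$
Because $\beta$ is nondecreasing, the second set is upward closed inside $\{0,\dots,z-1\}$, hence an interval abutting $z$; therefore $L(z)$ is a single interval with $\max L(z)=\beta(z)$. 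This simultaneously yields $L(x)=[\min L(x),\max L(x)]$ and, since $z\mapsto\beta(z)$ is nondecreasing, the requirement $\max L(x)\le\max L(y)$ for $x\le y$. Local finiteness is automatic, as $L(z)\subseteq[0,\beta(z)]$ is finite.

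The main obstacle is precisely that the three conditions \emph{symmetry}, \emph{interval balls}, and \emph{monotone right endpoints} must hold together: naively filling each $E(x)$ to its interval hull $[\min E(x),\max E(x)]$ destroys symmetry and need not produce monotone right endpoints. The device of generating $L$ from one nondecreasing function $\beta$ is what forces all four properties at once, and the one genuinely nonroutine point is showing each ball is an interval, which is exactly where the monotonicity of $\beta$ is used.
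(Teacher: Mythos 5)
Your proof is correct and follows essentially the same route as the paper: the paper chooses monotone functions $\alpha,\beta$ with $\alpha(x)\le\min E(x)$ and $\max E(x)\le\beta(x)$ and sets $L=\bigcup_{x\in\w}[\alpha(x),\beta(x)]^2$, and your relation $\max\{x,y\}\le\beta(\min\{x,y\})$ defines exactly this entourage with $\alpha(x)=x$, your preliminary symmetrization of $E$ playing the role of the lower bound $\alpha$. You also supply the verification of interval balls and monotone right endpoints that the paper leaves as ``easy to see.''
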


\begin{proof} Choose monotone functions $\alpha,\beta\in\ww$ such that
$\alpha(x)\le\min E(x)$ and $\max E(x)\le\beta(x)$ for all $x\in\w$.
It is easy to see that the entourage $L=\bigcup_{x\in \w}([\alpha(x),\beta(x)]\times[\alpha(x),\beta(x)])$ is monotone and contains $E$.
\end{proof}

\begin{lemma}\label{l:main} Let $T$ be a locally finite entourage on the set $X=\w$ and $\E_0$ be a $T$-transversal finitary coarse structure on $X$. If $w(\E_0)<\mathfrak b=\mathfrak c$, then for any disjoint infinite sets $A,B\subset X$ there exists a finitary cellular entourage $D$ on $X$ such that $D[A]=A\cup B=D[B]$ and the smallest coarse structure $\tilde\E$ containing $\E\cup\{D\}$ is $T$-transversal.
\end{lemma}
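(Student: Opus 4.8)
The plan is to take for $D$ the \emph{swap} entourage $D=\{(x,y)\in X\times X:y\in\{x,v(x)\}\}$ of a single involution $v$ with $v[A]=B$ and $\{x:v(x)\ne x\}=A\cup B$; any such $v$ already gives $D[A]=A\cup v[A]=A\cup B=B\cup v[B]=D[B]$, and $D$ is finitary cellular because its classes $\{x,v(x)\}$ have at most two elements. So the whole difficulty is to choose $v$ so that the generated structure $\tilde\E$ stays $T$-transversal. First I would fix a symmetric base $\{E_\xi\}_{\xi<\kappa}$ of $\E_0$ with $\kappa=w(\E_0)<\mathfrak b$; since $\E_0$ is finitary, every $E_\xi$ is locally finite. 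As $\E_0$ is a coarse structure, a base of $\tilde\E$ is given by the words $W=L_0L_1\cdots L_m$ whose letters $L_j$ lie in $\{E_\xi\}_{\xi<\kappa}\cup\{D\}$. A branch of $W$ in which every occurrence of $D$ is used trivially contributes the product of the $E_\xi$-letters, an entourage of $\E_0$, which is $T$-transversal by hypothesis; hence the task reduces to arranging that every branch which actually applies $v$ (``teleports'') never lands $T$-close to its source.

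The role of $\mathfrak b=\mathfrak c$ is that there are only $<\mathfrak b$ such words: a word is a finite sequence over $\{E_\xi\}_{\xi<\kappa}\cup\{D\}$, so the templates number $\kappa^{<\omega}=\kappa<\mathfrak b$. To each template $W$ I would attach a single locally finite entourage $H_W$, namely a fixed finite power (with exponent bounded by the length of $W$) of a monotone hull, furnished by Lemma~\ref{l:M}, of $T$ together with the finitely many $E_\xi$ occurring in $W$. The family $\{H_W\}$ has cardinality $<\mathfrak b$, so Lemmas~\ref{l:coin} and \ref{l:b} produce a single locally finite, and then by Lemma~\ref{l:M} monotone, entourage $F$ with $T\subseteq^* F$ and $H_W\subseteq^* F$ for every $W$. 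Now I would apply Lemma~\ref{l:v} to the disjoint sets $A,B$ and this entourage $F$, obtaining $v$ together with $A'\subseteq A$ and $B'\subseteq B$; this fixes $D$. The two features I will use are property~(4), that $v(x)\notin F(x)$ for all $x\in A\cup B$ (so $v$ escapes each $H_W$ off a finite set, because $H_W\subseteq^* F$), and properties~(3) and~(5), that the upper endpoints of the swap pairs form $A'\cup B'$ and have pairwise $F$-disjoint balls.

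It then remains to prove, for a fixed word $W$, that $W$ is $T$-transversal. Writing $G$ for the monotone hull of $T$ together with the $E_\xi$ appearing in $W$ (so that off a finite set each $E_\xi$-step and each point of $T(x)$ stays inside the interval $G(x)$), I would show by induction on the length of $W$ that for all but finitely many $x$ one has $W(x)\subseteq P_W(x)\cup(\w\setminus F(x))$, where $P_W\in\E_0$ is the product of the $E_\xi$-letters: the first summand collects the teleport-free branches, while every branch applying $v$ at least once lands outside $F(x)$. Granting this, $T(x)\cap W(x)\subseteq\bigl(T(x)\cap P_W(x)\bigr)\cup\bigl(T(x)\setminus F(x)\bigr)$; off a finite set the first set equals $\{x\}$ since $P_W\in\E_0$ is $T$-transversal, and the second is empty since $T(x)\subseteq F(x)$. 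Thus every base entourage of $\tilde\E$ is $T$-transversal, hence so is $\tilde\E$.

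The main obstacle is exactly the inductive claim that a branch using $v$ cannot re-enter $F(x)$. A single teleport is harmless: it carries a point of $G(x)$ to a point outside $F(z)$, hence outside a large $G$-neighbourhood of $x$, and the remaining boundedly many $G$-steps cannot return into $F(x)$ once $F$ dominates the relevant power of $G$. The delicate case is a chain of several teleports, where a later application of $v$ could in principle jump back toward $x$; here I would invoke property~(5): such a return would force two \emph{distinct} swap pairs to have their upper endpoints within a bounded $G$-distance of one another, contradicting the $F$-disjointness of the balls of $A'\cup B'$. Matching each teleport with its upper endpoint in $A'\cup B'$ and running the descent on the largest endpoint involved — using the order information of Lemma~\ref{l:v}(3) to keep the bookkeeping finite — is the technical heart of the argument, and is where I expect the real work to lie.
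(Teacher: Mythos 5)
Your setup coincides with the paper's: the same swap entourage $D=\{(x,y):y\in\{x,v(x)\}\}$ built from Lemma~\ref{l:v}, the same reduction of $\tilde\E$ to words $E_n\cdots E_0$ over $\E_0\cup\{D\}$, and the same use of Lemmas~\ref{l:b} and \ref{l:M} to obtain a monotone dominating entourage (the paper takes a single monotone $L$ with $T\subseteq L$ and $E\subseteq^* L$ for all $E\in\E_0$ and only ever needs the fixed powers $L^2,L^3,L^4$, so your length-dependent family $\{H_W\}$ is unnecessary ballast, though harmless). The problem is that everything you have actually argued is the easy part, and the two places where you wave at the hard part contain a genuine gap. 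First, your per-branch dichotomy --- ``every branch which actually applies $v$ never lands $T$-close to its source'' --- is false as stated: since every entourage contains the diagonal, a branch may teleport at $z$, take a trivial drift step, and teleport back ($v(v(z))=z$), landing exactly where a teleport-free branch lands, in particular possibly inside $F(x)$. Such collapsing branches must be folded back into shorter words, and your sketch has no mechanism for this. The paper's mechanism is the family of deleted-subword entourages $E_{i,j}=E_n\cdots E_jE_{i-1}\cdots E_0\in\mathcal B_{n-1}$: applying the inductive hypothesis ($T$-transversality of $\mathcal B_{n-1}$) to all of them produces the finite exceptional set $K$ and yields Claim~\ref{cl:neq}, that the chain points $x_1,\dots,x_n$ of a putative bad branch are pairwise distinct. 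This distinctness is what makes any order-descent possible at all, and it is entirely absent from your proposal.

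Second, the multi-teleport analysis you explicitly defer (``where I expect the real work to lie'') is the actual content of the lemma, not a routine verification. In the paper it occupies Claims~\ref{cl:odd}, \ref{cl:mono}, \ref{cl:i<s}, \ref{cl:i>s} and a four-case analysis organized around the first odd index $s$ with $x_s<x_{s+1}$, combining conditions (c), (e), (f) of Lemma~\ref{l:v} with the monotonicity of $L$ and $L^2$. Note that condition (5)/(e) controls only the \emph{larger} endpoint of each swap pair (the elements of $A'\cup B'$), so the contradiction you hope for --- ``two distinct swap pairs with upper endpoints at bounded $G$-distance'' --- does not follow from (5) alone; one needs the order separation (f), the direction information (c) to know which endpoint of each pair lies in $A'\cup B'$, and the monotonicity argument showing that once a branch teleports upward it keeps moving upward (Claim~\ref{cl:mono}), all resting on the distinctness from the deleted-word trick. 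Your inductive invariant $W(x)\subseteq P_W(x)\cup(\w\setminus F(x))$ may well be true, but the induction you propose does not establish it, and without the $E_{i,j}$ device and the $s$-based case analysis the proposal remains a plausible plan rather than a proof.
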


\begin{proof} By Lemmas~\ref{l:b} and \ref{l:M}, there exists a monotone entourage $L$ on $X$ such that $T\subseteq L$ and $E\subseteq^* L$ for any $E\in\E_0$.
It is easy to see that the entourages $L^2=LL$ and $L^3=LLL$ also are monotone.

By Lemma~\ref{l:v}, there exist infinite subsets $A'\subseteq A$, $B'\subseteq B$ and an involution $v$ of $\w$ such that
\begin{enumerate}
\item[(a)] $v(A)=B$;
\item[(b)] $v(x)=x$ for any $x\in\w\setminus(A\cup B)$;
\item[(c)] $v(x)<x$ if and only if $x\in A'\cup B'$;
\item[(d)] $v(x)\notin L^3[x]$ for any $x\in A\cup B$;
\item[(e)] $L^2[x]\cap L^2[y]=\emptyset$ for any distinct points $x,y$ in $A'\cup B'$;
\item[(f)] $\max L^2[x]<\min L^2[y]$ for any distinct points $x<y$ in $A'\cup B'$.
\end{enumerate}
In fact, the condition (f) follows from (e) and the monotonicity of $L$ and $L^2$.

Consider the cellular entourage $D=\{(x,y)\in X\times X:y\in\{x,v(x)\}\}$ and observe that $D[A]=A\cup B=D[B]$. Let $\tilde \E$ be the smallest coarse structure on $X$, containing the family $\E\cup\{D\}$. For every $n\in\w$, let $\mathcal B_n$ be the family of all entourages of the form $E_0\cdots E_n$, where $E_0,E_n\in\E$ and  $E_i=E_i^{-1}\in\E\cup\{D\}$ for all $i\in\{0,\dots,n\}$. Observe that $\mathcal B_0=\mathcal B_1$ and $\mathcal B_n\subseteq \mathcal B_{n+1}$ (the latter fact follows from the equality $E\Delta_X=E$ folding for any entourage $E$ on $X$).

 It is clear that the union $\bigcup_{n\in\w}\mathcal B_n$ is a base of the coarse structure $\tilde \E$. 

By induction on $n\in\w$ we shall prove that the families $\mathcal B_n$ are $T$-transversal.
The families $\mathcal B_0$ and $\mathcal B_1$ are contained in the coarse structure $\E$ and hence are $T$-transversal by the assumption. 

Now assume that for some $n\ge 3$ we have proved that the family $\mathcal B_{n-1}$ is $T$-transversal. To show that the family $\mathcal B_n$ is $T$-transversal, take any entourage $E\in\mathcal B_n\setminus \mathcal B_{n-1}$ and find symmetric entourages $E_0,\dots,E_{n}\in \E\cup\{D\}$ such that $E_0,E_n\in \E$ and $E=E_n\cdots E_0$. Taking into account that $E\notin\mathcal B_{n-1}$, we conclude that $E_i=D$ for odd $i$ and $E_j\in\E$ for even numbers $j$. In particular, the number $n$ is even.

For two numbers $i<j$ in the set $\{1,\dots,n-1\}$, consider the entourage  
$$E_{i,j}=E_{n}\cdots E_{j}E_{i-1}\cdots E_0$$ obtained from $E_n\cdots E_0$ by deleting the subword $E_{j-1}\cdots E_i$.
 Consider the family $\E'=\{E_{i,j}:0<i<j<n\}$ and observe that $\E'\subseteq \mathcal B_{n-1}$. Since the family $\mathcal B_{n-1}$ is $T$-transversal, there exists a finite set  $K\subset X$ such that $E_{i,j}(x)\cap T(x)=\{x\}$ for any $x\in X\setminus K$ and  $E_{i,j}\in\mathcal \E'$.

Since $E_i\subset^* L$ for $i\in\{0,\dots,n\}$, we can replace $K$ by a larger finite set and assume that $E_i(x)\subseteq L(x)$ for every $x\in\w\setminus K$. Consider the finite set $F=E_0\cdots E_n[K]$.

We claim that $E(x)\cap T(x)=\{x\}$ for any $x\in \w\setminus F$. 
To derive a contradiction, assume that for some $x\in\w\setminus F$ the set $E(x)\cap T(x)$ contains some element $y\ne x$. Since $y\in E(x)=E_n\cdots E_0[x]$, there exists a sequence of points $x_0,\dots,x_{n+1}$ such that $x_0=x$, $x_{n+1}=y$ and $x_{i+1}\in E_i(x_i)$ for all $i\le n$.  It follows that $x_i\in E_{i-1}\cdots E_0(x_0)$ and hence $x_0\in E_0\cdots E_{i-1}(x_i)\subseteq E_0\cdots E_n(x_i)$. Taking into account that  $x_0=x\notin F=E_0\cdots E_n[K]$, we conclude that $x_i\notin K$ for all $i\in\{0,\dots,n\}$.

\begin{claim}\label{cl:neq} For any numbers $0<i<j\le n$ we have $x_i\ne x_j$.
\end{claim}

\begin{proof} Assume that $x_i=x_j$ for some $i<j\le n$.
Then $$y=x_{n+1}\in E_n\cdots E_j(x_j)=E_n\cdots E_j(x_i)\subseteq E_n\cdots E_jE_{i-1}\cdots E_0(x_0)=E_{i,j}(x)$$ and $y\in E_{i,j}(x)\cap T(x)=\{x\}$ by the choice of $K$. But this contradicts the choice of $y\ne x$.
\end{proof}

\begin{claim}\label{cl:odd} For any odd number $i<n$ we have $x_{i+1}=v(x_i)$.
\end{claim}

\begin{proof} For any odd $i\le n$ we have $E_i=D$ and $x_i\ne x_{i+1}\in D(x_i)=\{x_i,v(x_i)\}$, which implies that $x_{i+1}=v(x_i)$.
\end{proof}

\begin{claim}\label{cl:mono} For any odd number $i\le n-3$ the inequality $x_{i}<x_{i+1}$ implies $x_{i+2}<x_{i+3}$.  
\end{claim}

\begin{proof}  By Claim~\ref{cl:odd}, $x_{i+1}=v(x_i)$. By condition (c), the inequality $x_i<x_{i+1}=v(x_i)$ implies $x_{i+1}\in A'\cup B'$. Taking into account that $x_{i+2}\in E_{i+1}(x_{i+1})\subseteq L[x_{i+1}]$ and $(A'\cup B')\cap L(x_{i+1})=\{x_{i+1}\}\ne\{x_{i+2}\}$, we conclude that $x_{i+2}\notin A'\cup B'$ and hence $x_{i+3}=v(x_{i+2})>x_{i+2}$ by the condition (c).
\end{proof}

If $x_{i}>x_{i+1}$ for all odd numbers $i<n$, then put $s=n+1$. Otherwise let $s\in\{1,\dots,n-1\}$ be the smallest odd number such that $x_{s}<x_{s+1}$.

\begin{claim}\label{cl:i<s} For any odd number $i$ with $0<i<s$ the following conditions hold: 
\begin{enumerate}
\item $x_i>x_{i+1}$;
\item $x_i\in A'\cup B'$;
\item if $i<s+2$, then $x_i>x_{i+2}$.
\end{enumerate}
\end{claim}

\begin{proof} 1. The first inequality follows from the definition of $s$ and Claim~\ref{cl:neq}.
\smallskip

2. By Claim~\ref{cl:odd}, $x_{i+1}=v(x_i)$ and then the inequality $x_i>x_{i+1}=v(x_i)$  and condition (c) ensure that $x_i\in A'\cup B'$. 
\smallskip

3. To derive a contradiction, assume that $i+2<s$ and $x_i\le x_{i+2}$. Then $x_i<x_{i+2}$ by Claim~\ref{cl:neq}. 
 It follows from $x_{i+2}\notin K$  that $x_{i+1}\in E_{i+1}^{-1}(x_{i+2})=E_{i+1}(x_{i+2})\subseteq L(x_{i+2})$. By the second statement, $x_i,x_{i+2}\in A'\cup B'$. The strict inequality $x_i<x_{i+2}$ and condition (f) imply $x_i<\min L(x_{i+2})\le x_{i+1}$, which contradicts the first statement.
\end{proof}

\begin{claim}\label{cl:i>s} For any odd number $i$ with $s\le i<n$ the following conditions hold:
\begin{enumerate}
\item $x_i<x_{i+1}$;
\item $x_{i+1}\in A'\cup B'$;
\item if $i+2< n$, then $x_{i+1}<x_{i+3}$.
\end{enumerate}
\end{claim}

\begin{proof} 1. The first inequality follows from the definition of $s$ and Claim~\ref{cl:mono}.
\smallskip

2. By Claim~\ref{cl:odd}, $x_{i+1}=v(x_i)$ and then the inequality $x_i<x_{i+1}=v(x_i)$  and condition (c) ensure that $x_{i+1}\in A'\cup B'$. 
\smallskip

3. To derive a contradiction, assume that $i+2<n$ and $x_{i+1}\ge x_{i+3}$.  Then $x_{i+1}>x_{i+3}$ by Claim~\ref{cl:neq}. 
 It follows from $x_{i+1}\notin K$ that $x_{i+2}\in E_{i+1}(x_{i+1})\subseteq L(x_{i+1})$. By the second statement, $x_{i+1},x_{i+3}\in A'\cup B'$. The strict inequality $x_{i+3}<x_{i+1}$ and condition (f) imply $x_{i+3}<\min L(x_{i+1})\le x_{i+2}$, which contradicts the first statement applied to $i+2$.
\end{proof}

Concerning the values of the even number $n$ and the odd number $s$, four cases are possible:
\smallskip

1) $n=2$. In this case $x_2=v(x_1)\notin L^3(x_1)$ by the condition (d). On the other hand, $x_2\in E_2^{-1}(x_3)=E_2(y)\subseteq LT(x)\subseteq L^2(x_0)\subseteq L^2E_0^{-1}(x_1)\subseteq L^3(x_1)$ and this is a desired contradiction.
\smallskip

2) $1<s<n$. In this case $x_1>x_2$, $x_{n-1}<x_{n}$ and $x_1,x_{n}$ are two distinct points of the set $A'\cup B'$ (by Claims~\ref{cl:neq}, \ref{cl:i<s} and \ref{cl:i>s}). Observe that $x_0\in E_0^{-1}(x_1)\subseteq L(x_1)$, $x_{n+1}\in E_n(x_n)\subseteq L(x_{n})$ and the condition (e) guarantees that $L^2(x_1)\cap L(x_{n})=\emptyset$. On the other hand, $x_{n+1}=y\in T(x)\subseteq L(x_0)\subseteq L^2(x_1)$ and hence $x_{n+1}\in L^2(x_1)\cap L(x_{n})=\emptyset$ and this is a desired contradiction.
\smallskip

3) $n\ge 4$ and $s+1=n$. In this case Claim~\ref{cl:i<s} ensures that $x_{i+1}<x_{i}\in A'\cup B'$ for all odd numbers $i<n$ and $x_1,x_{2},\dots,x_{n-1}$ is a decreasing sequence in $A'\cup B'$. Then $x_{n-1}<x_1$ are distinct elements of $A'\cup B'$ and hence $\max L^2(x_{n-1})<\min L^2(x_1)$ by the condition (f). Since $x_{n}<x_{n-1}$ we can apply the monotonicity of the entourage $L^2$ and obtain $\max L^2(x_n)\le \max L^2(x_{n-1})<\min L^2(x_1)$. Then $y=x_{n+1}\in E_n(x_n)\subseteq L(x_{n})$ and $x_{n}=E_n^{-1}(x_{n+1})\subseteq L(y)\subseteq LT(x)\subseteq L^2(x_0)\subseteq L^2E_0^{-1}(x_1)\subseteq L^3(x_1)\subseteq L^4(x_1)$, which implies $L^2(x_{n})\cap L^2(x_1)\ne\emptyset$ and contradicts $\max L^2(x_{n})<\min L^2(x_1)$.
\smallskip

4) $n\ge 4$ and $s=1$. In this case Claim~\ref{cl:i>s} ensures that $x_{i}<x_{i+1}\in A'\cup B'$ for all odd numbers $i\in\{1,\dots,n-1\}$, and $x_2,x_4,\dots,x_{n}$ is an increasing sequence of elements of the set $A'\cup B'$. By the condition (f) and the monotonicity of $L^2$, the inequalities $x_1<x_2<x_n$ imply $$\max L(x_1)\le \max L(x_2)\le\max L^2(x_2)<\min L^2(x_{n}),$$ and hence $L(x_1)\cap L^2(x_n)=\emptyset$ and  $x_{n}\notin L^3(x_1)$. On the other hand, $x_{n+1}=y\in T(x)\subseteq L(x_0)$ and $x_{n}\in E_n^{-1}(y)\subseteq LL(x_0)\subseteq L^2E_0^{-1}(x_1)\subseteq L^3(x_1)$, which is a contradiction completing the proof of the $T$-transversality of $\mathcal B_n$. This also completes the inductive step.
\smallskip

After completing the inductive construction, we conclude that the base $\bigcup_{n=1}^\infty\mathcal B_n$ of the coarse structure $\tilde\E$ is $T$-transversal and so is $\tilde\E$.
\end{proof}

Now we are ready to present 

\noindent {\em Proof of Theorem~\ref{t:hyperr}.} 
Let $T$ be a locally finite entourage on the countable set $X=\w$ and $\E_0$ be a $T$-transversal finitary coarse structure on $X$ such that $w(\E_0)<\mathfrak b=\mathfrak c$.

Let $\{(A_\alpha,B_\alpha)\}_{\alpha\in\mathfrak c}$ be an enumeration of the set $\{(A,B)\in [X]^\w\times[X]^\w:A\cap B=\emptyset\}\cup\{(X,X)\}$ such that $(A_0,B_0)=(X,X)$.

We shall inductively construct an increasing transfinite sequence of $T$-transversal  finitary coarse structures $(\E_\alpha)_{\alpha\in\mathfrak c}$ on $X$ such that for every $\alpha<\mathfrak c$, the coarse structure $\E_\alpha$ has weight $w(\E_\alpha)\le |\w(\E_0)+\alpha|$ and contains an entourage $D_\alpha$ such that $D_\alpha[A_\alpha]=A_\alpha\cup B_\alpha=D_\alpha[B_\alpha]$.

The coarse structure $\E_0$ is already given and the entourage $D_0=\Delta_X\in\E_0$  has the required property: $D_0[A_0]\cap D_0[B_0]=D_0[X]\cap D_0[X]=X$.

Assume that for some nonzero ordinal $\alpha\in \mathfrak c$ we have constructed an increasing transfinite sequence $(\E_\beta)_{\beta\in \alpha}$ of $T$-transversal finitary coarse structures on $X$ such that $w(\E_\beta)\le|w(\E_0)+\beta|$ for all $\beta<\alpha$. Then the union $\E_{<\alpha}=\bigcup_{\beta<\alpha}\E_\beta$ is a  $T$-transversal finitary coarse structure on $X$ of weight $w(\E_{<\alpha})\le\sum_{\beta<\alpha}w(\E_\beta)\le |\alpha|\cdot |w(\E_0)+\alpha|=|w(\E_0)+\alpha|<\mathfrak  c=\mathfrak b$. By Lemma~\ref{l:main}, there exists a finitary entourage $D_\alpha$ on $X$ such that $D_\alpha[A_\alpha]=A_\alpha\cup B_\alpha=D_\alpha[B_\alpha]$ and the smallest coarse structure $\E_{\alpha}$ containing the family $\E_{<\alpha}\cup\{D_\alpha\}$ is $T$-transversal. It is clear that the coarse structure $\E_\alpha$ is finitary and has weight $w(\E_\alpha)\le|w(\E_{<\alpha})+\w|\le|w(\E_0)+\alpha|$. This completes the inductive step.
\smallskip

After completing the inductive construction, consider the coarse structure $\E=\bigcup_{\alpha\in\mathfrak c}\E_\alpha$ on $X$, and observe that it is finitary, $T$-transversal and contains the coarse structure $\E_0$. To see that the coarse space $(X,\E)$ is large, take two disjoint infinite sets $A,B$ in $X$ and find an ordinal $\alpha\in\mathfrak c$ such that $(A_\alpha,B_\alpha)=(A,B)$. Since  $D_\alpha[A]=D_\alpha[A_\alpha]=A_\alpha\cup B_\alpha=D_\alpha[B_\alpha]=D_\alpha[B]$ and $D_\alpha\in \E$, the coarse space $(X,\E)$ is large by Proposition~\ref{p:hyper}.  \hfill $\Box$

\begin{theorem}\label{t:main2} Under $\mathfrak b=\mathfrak c$ there exists a transfinite sequence $(\E_\alpha)_{\alpha<\mathfrak c}$ of large finitary coarse structures on the set $X=\w$ and a sequence of nondiscrete entourages $(E_\alpha)_{\alpha\in\mathfrak c}\in\prod_{\alpha\in \mathfrak c}\E_\alpha$ such that for any ordinals $\alpha<\beta<\mathfrak c$, the coarse structure $\E_\beta$ is $E_\alpha$-transversal and hence $\E_\beta\ne \E_\alpha$.
\end{theorem}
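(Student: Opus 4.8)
The plan is to build the two sequences $(\E_\alpha)_{\alpha<\mathfrak c}$ and $(E_\alpha)_{\alpha<\mathfrak c}$ by transfinite recursion of length $\mathfrak c$, invoking Theorem~\ref{t:hyperr} once at every stage and feeding its output forward. The inductive hypothesis I would carry is: each $E_\gamma$ is a nondiscrete finitary entourage with $E_\gamma\in\E_\gamma$; each $\E_\gamma$ is a large finitary coarse structure on $\w$; and $\E_\gamma$ is $E_\delta$-transversal for every $\delta<\gamma$. The role of the hypothesis $\mathfrak b=\mathfrak c$ is twofold: it is needed to apply Theorem~\ref{t:hyperr}, and it lets me collapse the requirement of being transversal to \emph{all} previously chosen entourages into being transversal to a single one.

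At stage $\alpha$, assume $(\E_\gamma,E_\gamma)_{\gamma<\alpha}$ are given. Since $|\alpha|<\mathfrak c=\mathfrak b$ and each $E_\gamma$ is finitary, hence locally finite, Lemma~\ref{l:b} yields a single locally finite entourage $T_\alpha$ on $\w$ with $E_\gamma\subseteq^* T_\alpha$ for all $\gamma<\alpha$. Using the local finiteness of $T_\alpha$, I choose an involution $v_\alpha$ of $\w$ whose set of non-fixed points is infinite and which satisfies $v_\alpha(x)\notin T_\alpha(x)$ whenever $v_\alpha(x)\ne x$ (build it greedily by pairing $\{a_n,b_n\}$ with $b_n\notin T_\alpha(a_n)$ and all $a_n,b_n$ distinct). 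The associated cellular entourage $E_\alpha=\{(x,y)\in\w\times\w:y\in\{x,v_\alpha(x)\}\}$ is then finitary, nondiscrete (the set $\{x:E_\alpha(x)\ne\{x\}\}$ is infinite), and $T_\alpha$-transversal. Let $\E_0^\alpha$ be the coarse structure generated by $\{E_\alpha\}$ together with the trivial entourages; as $\w$ is countable it has the countable base $\{\Delta_\w\cup E_\alpha\cup F:F\in[\w\times\w]^{<\w}\}$, so $w(\E_0^\alpha)=\w<\mathfrak c$, and it is $T_\alpha$-transversal since each of its members lies in a finite modification of $E_\alpha$. Applying Theorem~\ref{t:hyperr} with $T=T_\alpha$ and seed $\E_0^\alpha$ produces a large $T_\alpha$-transversal finitary coarse structure $\E_\alpha\supseteq\E_0^\alpha$, so that $E_\alpha\in\E_\alpha$, completing the recursion step.

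It then remains to read off the transversality clause and distinctness. The elementary transfer observation is that if an entourage $E$ is $T_\alpha$-transversal and $E_\gamma\subseteq^* T_\alpha$, then $E$ is $E_\gamma$-transversal: outside a finite set of $x$ one has $E_\gamma(x)\subseteq T_\alpha(x)$ (each exceptional $x$ contributes a pair to the finite set $E_\gamma\setminus T_\alpha$), whence $E_\gamma(x)\cap E(x)\subseteq T_\alpha(x)\cap E(x)=\{x\}$ off a finite set. Applying this to every member of $\E_\alpha$ shows that $\E_\alpha$ is $E_\gamma$-transversal for all $\gamma<\alpha$, which is precisely the clause ``$\E_\beta$ is $E_\alpha$-transversal whenever $\alpha<\beta$''. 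For distinctness, recall the observation that an entourage $T$ is $T$-transversal if and only if it is trivial; since $E_\alpha$ is nondiscrete, hence nontrivial, it is \emph{not} $E_\alpha$-transversal. As $\E_\beta$ is $E_\alpha$-transversal for every $\beta>\alpha$, we get $E_\alpha\notin\E_\beta$ while $E_\alpha\in\E_\alpha$, so $\E_\alpha\ne\E_\beta$.

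The genuinely substantive content is entirely absorbed into Theorem~\ref{t:hyperr}, which I use as a black box. The only new ingredients are (i) the reduction of simultaneous transversality against the fewer-than-$\mathfrak c$ previously constructed entourages to transversality against the single dominating entourage $T_\alpha$, legitimate exactly because $\mathfrak c=\mathfrak b$ via Lemma~\ref{l:b} and the $\subseteq^*$-transfer above, and (ii) producing at each stage a nondiscrete $T_\alpha$-transversal seed. I expect the main point to watch is that $T_\alpha$ stays locally finite (so Theorem~\ref{t:hyperr} applies) and that the seed genuinely contains a nondiscrete entourage, so that the final distinctness argument has teeth; both are secured by the local finiteness of $T_\alpha$ and the construction of $E_\alpha$ as an infinite $T_\alpha$-avoiding matching.
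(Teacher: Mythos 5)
Your proposal is correct and follows essentially the same route as the paper's proof: a transfinite recursion of length $\mathfrak c$ in which Lemma~\ref{l:b} (via $\mathfrak b=\mathfrak c$) dominates all previously chosen entourages $E_\gamma$ by a single locally finite $T_\alpha$ with $E_\gamma\subseteq^* T_\alpha$, Theorem~\ref{t:hyperr} is invoked as a black box to get a large $T_\alpha$-transversal finitary structure $\E_\alpha$, the $\subseteq^*$-transfer gives $E_\gamma$-transversality, and distinctness follows because a nontrivial entourage is never transversal to itself. The only (harmless) deviation is how $E_\alpha$ is obtained: the paper applies Theorem~\ref{t:hyperr} to the minimal $T_\alpha$-transversal seed and then extracts a nondiscrete $E_\alpha\in\E_\alpha$ from largeness, whereas you plant a hand-built matching into the seed --- just note that your greedy pairing should choose $b_n\notin T_\alpha(a_n)\cup T_\alpha^{-1}(a_n)$ (possible since $T_\alpha^\pm(a_n)$ is finite) so that $v_\alpha(x)\notin T_\alpha(x)$ holds at both $a_n$ and $b_n$, as your stated requirement demands.
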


\begin{proof} The transfinite sequences $(\E_\alpha)_{\alpha<\mathfrak c}$ and  $(E_\alpha)_{\alpha\in\mathfrak c}\in\prod_{\alpha\in \mathfrak c}\E_\alpha$ will be constructed by transfinite induction. To start the induction, apply Theorem~\ref{t:main} and find a large finitary coarse structure $\E_0$ on $X$.
Being large, the coarse structure $\E_0$ contains a nondiscrete entourage $E_0$. 

Assume that for some ordinal $\alpha\in\mathfrak c$ we have construtcted transfinite sequences of large coarse spaces $(\E_\gamma)_{\gamma\in \alpha}$ and nondiscrete entourages $(E_\gamma)_{\gamma\in \alpha}\in \prod_{\gamma\in \alpha}\E_\gamma$. Since $\alpha<\mathfrak c=\mathfrak b$, we can apply Lemma~\ref{l:b} and find a locally finite entourage $T_\alpha$ on $X$ such that $E_\gamma\subset^* T_\alpha$ for all $\gamma\in\alpha$. Applying Theorem~\ref{t:main}, find a $T_\alpha$-transversal large finitary coarse structure $\E_\alpha$ on $X$. Then $\E_\alpha$ will be $E_\gamma$-transversal for every $\gamma\in\alpha$ (which follows from $E_\gamma\subseteq^* T_\alpha$). 
Being large, the coarse structure $\E_\alpha$ contains a nondiscrete entourage $E_\alpha$. This completes the inductive step. 

After completing the inductive construction, we obtain the transfinite sequences $(\E_\alpha)_{\alpha<\mathfrak c}$ and  $(E_\alpha)_{\alpha\in\mathfrak c}\in\prod_{\alpha\in \mathfrak c}\E_\alpha$ that have the required properties.
\end{proof}

\begin{corollary}\label{c:main} Under $\mathfrak b=\mathfrak c$ there are continuum many pairwise distinct large finitary coarse structures on $\w$.
\end{corollary}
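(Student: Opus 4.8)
The plan is to read the conclusion straight off Theorem~\ref{t:main2}, which already performs all the substantive work. First I would invoke Theorem~\ref{t:main2}: under the assumption $\mathfrak b=\mathfrak c$ it furnishes a transfinite sequence $(\E_\alpha)_{\alpha<\mathfrak c}$ of large finitary coarse structures on $\w$, together with nondiscrete entourages $(E_\alpha)_{\alpha\in\mathfrak c}\in\prod_{\alpha\in\mathfrak c}\E_\alpha$, such that $\E_\beta$ is $E_\alpha$-transversal whenever $\alpha<\beta<\mathfrak c$. Thus I have a family of large finitary coarse structures indexed by $\mathfrak c$; it remains only to argue that these structures are genuinely pairwise distinct.

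The single point to verify is distinctness, and here I would use the observation recorded just before Theorem~\ref{t:hyperr}, namely that an entourage $T$ is $T$-transversal if and only if $T$ is trivial. Since each $E_\alpha$ is nondiscrete, hence nontrivial, $E_\alpha$ is not $E_\alpha$-transversal. For ordinals $\alpha<\beta<\mathfrak c$ the coarse structure $\E_\beta$ is $E_\alpha$-transversal, which by definition means every entourage of $\E_\beta$ is $E_\alpha$-transversal; in particular $E_\alpha\notin\E_\beta$. As $E_\alpha\in\E_\alpha$, this yields $\E_\alpha\neq\E_\beta$. By symmetry of $\neq$ we conclude $\E_\alpha\neq\E_\beta$ for all distinct $\alpha,\beta<\mathfrak c$, so $\{\E_\alpha:\alpha<\mathfrak c\}$ consists of exactly $\mathfrak c$ pairwise distinct large finitary coarse structures on $\w$, which is the assertion of the corollary.

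I do not expect a genuine obstacle at this level, since all the difficulty is absorbed into Theorem~\ref{t:main2}, whose proof in turn rests on Theorem~\ref{t:main} (equivalently Theorem~\ref{t:hyperr}) and on the delicate $T$-transversality bookkeeping of Lemma~\ref{l:main}. The only care needed is in spelling out the transversality-to-distinctness implication above; everything else is a direct appeal to the already-established transfinite construction.
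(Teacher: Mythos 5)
Your proposal is correct and coincides with the paper's route: Corollary~\ref{c:main} is read off directly from Theorem~\ref{t:main2}, whose statement already contains the implication ``$\E_\beta$ is $E_\alpha$-transversal and hence $\E_\beta\ne\E_\alpha$'', which you merely unpack (a nondiscrete entourage is nontrivial, hence not transversal to itself, so $E_\alpha\in\E_\alpha\setminus\E_\beta$). Spelling out this transversality-to-distinctness step is exactly the intended argument, so there is nothing to correct.
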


\begin{remark} Corollary~\ref{c:main} gives a consistent negative answer to Question 5.2 in \cite{PPva}.
\end{remark}

 Observe that the largest finitary coarse structure $\E_\w[X]$ on any set $X$ is invariant under the action of the symmetric group $S_X$ of $X$. This implies that   
a finitary coarse structure $\E$ on $X$ is asymorphic to $\E_\w[X]$ if and only if $\E=\E_\w[X]$. This observation and Theorem~\ref{t:main} imply:

\begin{corollary} Under $\mathfrak b=\mathfrak d$ the set $\w$ carries at least  two large finitary coarse structure which are not asymorphic.
\end{corollary}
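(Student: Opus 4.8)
The plan is to leverage the rigidity of the largest finitary coarse structure $\IE_\w[\w]$ recorded in the displayed observation: a finitary coarse structure on $\w$ is asymorphic to $\IE_\w[\w]$ if and only if it \emph{coincides} with $\IE_\w[\w]$. Since $(\w,\IE_\w[\w])$ is itself large by Example~\ref{ex:ESX}, it suffices to exhibit a single large finitary coarse structure $\E$ on $\w$ with $\E\ne\IE_\w[\w]$. Then $\IE_\w[\w]$ and $\E$ are two large finitary coarse structures, and any asymorphism of $\E$ onto $\IE_\w[\w]$ would force $\E=\IE_\w[\w]$ by the observation; hence they are not asymorphic.

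To build such an $\E$, I would fix a nontrivial finitary entourage $T$, for instance $T=\{(x,y)\in\w\times\w:|x-y|\le 1\}$, and start from the smallest coarse structure $\E_0=\{\Delta_\w\cup F:F\in[\w\times\w]^{<\w}\}$, which has weight $\w$ and is $T$-transversal, since every trivial entourage is $T$-transversal. Feeding $T$ and $\E_0$ into Theorem~\ref{t:hyperr} yields a large $T$-transversal finitary coarse structure $\E\supseteq\E_0$. The role of $T$-transversality is to secure properness: an entourage is $T$-transversal precisely when it is trivial, so the nontrivial $T$ is not $T$-transversal and therefore $T\notin\E$, whereas $T\in\IE_\w[\w]$. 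Thus $\E\ne\IE_\w[\w]$, which is exactly what the first paragraph requires.

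The only substantial ingredient is the production of $\E$, supplied by Theorem~\ref{t:hyperr}, and I expect this to be the main obstacle. The transfinite recursion behind Theorem~\ref{t:hyperr} must run through all $\mathfrak c$ pairs of disjoint infinite subsets of $\w$ while keeping every intermediate coarse structure of weight $<\mathfrak b$, so that Lemma~\ref{l:main} can be applied at each stage; this is precisely what ties the construction to the equality of $\mathfrak b$ with the continuum. I would therefore carry out the construction under $\mathfrak b=\mathfrak c$, and the step I would scrutinise most carefully is the reconciliation with the hypothesis as displayed, since this route consumes $\mathfrak b=\mathfrak c$ rather than the weaker-looking $\mathfrak b=\mathfrak d$.
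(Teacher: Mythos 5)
Your proof is correct and is, in substance, the paper's own argument: the corollary is stated immediately after the rigidity observation (a finitary coarse structure on $\w$ is asymorphic to $\IE_\w[\w]$ if and only if it equals $\IE_\w[\w]$), and the paper obtains the second large finitary coarse structure from the same $T$-transversal machinery you invoke --- its citation ``Theorem~\ref{t:main}'' is a mislabel for Theorem~\ref{t:main2}, whose very first step is exactly your move, an application of Theorem~\ref{t:hyperr}. Your packaging is a mild streamlining: rather than quoting the transfinite sequence of Theorem~\ref{t:main2}, you apply Theorem~\ref{t:hyperr} once with a single nontrivial locally finite $T$ and the smallest coarse structure $\E_0$ (correctly noting that trivial entourages are $T$-transversal, so $w(\E_0)=\w<\mathfrak b$ suffices), and then certify $\E\ne\IE_\w[\w]$ via $T\in\IE_\w\setminus\E$; together with Example~\ref{ex:ESX} and the rigidity observation this finishes the proof. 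One wording nit: your clause ``an entourage is $T$-transversal precisely when it is trivial'' is false for arbitrary entourages (otherwise $\E$ would consist of trivial entourages and could not be large); the paper's observation, which is all you actually use, is that the entourage $T$ \emph{itself} is $T$-transversal if and only if $T$ is trivial, whence the nontrivial $T$ lies outside the $T$-transversal structure $\E$. As for the step you rightly scrutinised, it resolves in your favour: the hypothesis ``$\mathfrak b=\mathfrak d$'' in the corollary is evidently a typo for ``$\mathfrak b=\mathfrak c$'' --- this is the section's title and standing assumption, and the hypothesis of Theorem~\ref{t:hyperr}, Theorem~\ref{t:main2} and Corollary~\ref{c:main} --- so the paper's own proof consumes $\mathfrak b=\mathfrak c$ exactly as yours does, and nothing in the paper derives the statement from the literal, weaker assumption $\mathfrak b=\mathfrak d$.
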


\begin{problem} Are there infinitely (continuum) many pairwise non-asymorphic large coarse structures on $\w$?
\end{problem}

\section{The dual discreteness and separation numbers $\Ddelta$ and $\Dsigma$}\label{s:dual}

Theorem~\ref{t:alternative} suggests to define dual cardinal characteristics to $\Adelta$ and $\Asigma$ as
$$
\begin{aligned}
&\Ddelta=\min\{|\A|:\A\subseteq[\w]^\w\;\forall h\in I_\w\;\exists A\in\A\;\;|h[A]\cap A|<\w)\}\quad\mbox{and}\\
&\Dsigma=\min\{|\A|:\A\subseteq[\w]^\w\;\forall h\in I_\w\;\exists A,B\in\A\;\;|h[A]\cap B|<\w)\}.
\end{aligned}
$$
For these dual cardinal characteristics we have the dual version of Theorem~\ref{t:DS}. 
 
\begin{theorem}\label{t:duals} $\cov(\M)\le\Dsigma\le\Ddelta\le\min\{\mathfrak d,\mathfrak r,\non(\mathcal N)\}$.
\end{theorem}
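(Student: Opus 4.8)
The plan is to prove the four inequalities separately, each as the Baire‑category, measure, or order‑theoretic dual of the corresponding inequality of Theorem~\ref{t:DS}, using throughout the description of $\Ddelta,\Dsigma$ via the almost free involutions $I_\w$. The inequality $\Dsigma\le\Ddelta$ is immediate: if $\A\subseteq[\w]^\w$ witnesses $\Ddelta$, so that for every $h\in I_\w$ there is $A\in\A$ with $h[A]\cap A$ finite, then taking $B=A$ shows the same family witnesses $\Dsigma$. For the three upper bounds I build an explicit witnessing family for $\Ddelta$ out of the relevant combinatorial object, reversing the construction of the dual lemma.

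For $\Ddelta\le\mathfrak d$ (dual of Lemma~\ref{l:b<Delta}) I fix a dominating family $\{g_\alpha\}_{\alpha<\mathfrak d}$ of increasing functions and choose for each $\alpha$ a sparse set $A_\alpha=\{a^\alpha_n\}_{n\in\w}\in[\w]^\w$ with $g_\alpha(a^\alpha_n)<a^\alpha_{n+1}$. Given $h\in I_\w$, let $\hat h(n)=\max_{k\le n}h(k)$ and pick $\alpha$ with $\hat h\le^* g_\alpha$; then for almost every pair $\{x,h(x)\}\subseteq A_\alpha$ the larger element would be forced below the next gap of $A_\alpha$, which is impossible, so $h[A_\alpha]\cap A_\alpha$ is finite (the at most one fixed point of $h$ adds at most one element). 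For $\Ddelta\le\mathfrak r$ (dual of $\mathfrak s\le\Adelta$) a reaping family $R$ works directly: for $h\in I_\w$ consider its set of lower points $L_h=\{x:x<h(x)\}$, an infinite $h$‑independent set with $h[L_h]\cap L_h=\emptyset$; choosing $r\in R$ with $r\subseteq^* L_h$ or $r\subseteq^*\w\setminus L_h$ places $r$ almost inside one independent side of the matching $h$, forcing $h[r]\cap r$ finite, so $R$ itself witnesses $\Ddelta$. For $\Ddelta\le\non(\mathcal N)$ (dual of Lemma~\ref{l:Brian}) I reuse the probability space $(K,\lambda)$ from that proof, which is Borel isomorphic to Lebesgue measure, so $\non(\mathcal N_\lambda)=\non(\mathcal N)$; take a $\lambda$‑nonnull set $M\subseteq K$ of size $\non(\mathcal N)$ and set $\A=\{x[\w]:x\in M\}$. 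Since the set $Z_h$ of that proof is null, $M\not\subseteq Z_h$, and any $x\in M\setminus Z_h$ yields $A=x[\w]$ with $h[A]\cap A$ finite; thus $\A$ witnesses $\Ddelta$. Together these three give $\Ddelta\le\min\{\mathfrak d,\mathfrak r,\non(\mathcal N)\}$.

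The remaining bound $\cov(\M)\le\Dsigma$, dual to $\Asigma\le\non(\M)$, I expect to be the main obstacle: it is the only lower bound and it requires running the Baire‑category argument on the correct space. I endow $I_\w$ with the topology inherited from $S_\w\subseteq\w^\w$; the condition $h\circ h=\mathrm{id}$ is closed and, among involutions, having at most one fixed point is closed as well, so $I_\w$ is a closed, hence Polish, subspace of $S_\w$, and it is perfect, so the covering number of its meager ideal equals $\cov(\M)$. For fixed $A,B\in[\w]^\w$ the sets $U_n=\{h\in I_\w:\exists a\in A,\ a\ge n,\ h(a)\in B\}$ are open and dense (any finite partial involution extends by matching a large $a\in A$ with a large $b\in B$), so $\{h:h[A]\cap B\in[\w]^\w\}=\bigcap_{n\in\w}U_n$ is comeager and the bad set $N_{A,B}=\{h\in I_\w:|h[A]\cap B|<\w\}$ is meager. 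Consequently, if $\A\subseteq[\w]^\w$ has $|\A|<\cov(\M)$, then $\bigcup_{A,B\in\A}N_{A,B}$ is a union of fewer than $\cov(\M)$ meager sets and so does not cover $I_\w$; any $h$ outside it has $|h[A]\cap B|=\w$ for all $A,B\in\A$, so $\A$ fails to witness $\Dsigma$. Hence $\cov(\M)\le\Dsigma$, which together with the previous paragraph completes the chain $\cov(\M)\le\Dsigma\le\Ddelta\le\min\{\mathfrak d,\mathfrak r,\non(\mathcal N)\}$. The only genuinely technical points to check carefully are that $I_\w$ is a perfect Polish space (so the category invariance of $\cov(\M)$ applies) and the density of the $U_n$ inside $I_\w$ rather than inside all of $S_\w$; the other three inequalities are routine dualizations of the already‑established lemmas.
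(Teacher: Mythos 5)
Your proposal is correct, and for four of the five inequalities it follows essentially the same route as the paper. The immediate reduction $\Dsigma\le\Ddelta$ (take $B=A$) is the paper's; your proof of $\Ddelta\le\mathfrak d$ is the paper's argument with sparse sets $A_\alpha$ beating a dominating family, differing only cosmetically in that you $\le^*$-dominate the monotonization $\hat h$ where the paper dominates $h$ everywhere; your proof of $\Ddelta\le\mathfrak r$ via the lower-point set $L_h=\{x:x<h(x)\}$ is exactly the paper's splitting $\w\setminus\mathrm{Fix}(h)=A\cup B$ with $A=h[B]$ (your $L_h$ is such an $A$); and your proof of $\Ddelta\le\non(\mathcal N)$ reuses the space $(K,\lambda)$ and the null sets $Z_h$ of Lemma~\ref{l:Brian} precisely as the paper does. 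The genuinely noteworthy part is $\cov(\M)\le\Dsigma$: the paper's proof announces four lemmas but its text contains only the three upper bounds, so the category lemma is in fact absent from the printed argument (the poset section only gives the weaker chain $\cov(\M)\le{\downarrow}\!{\uparrow}(\IE_\w^\bullet)\le\Ddelta$). Your argument fills this in correctly and is the natural dualization of the paper's proof of $\Asigma\le\non(\M)$ from Theorem~\ref{t:DS}, transplanted from $S_\w$ to $I_\w$. The two technical points you flag do check out: $I_\w$ is closed in $\w^\w$, since $h\circ h=\mathrm{id}$ is a closed condition and having two distinct fixed points is an open one, and it is perfect, so it is a perfect Polish space whose meager-covering number equals $\cov(\M)$; and each $U_n$ is dense in $I_\w$ because one can graft a fresh transposition $(a\,b)$ with $a\in A$, $b\in B$ large onto any almost free involution (re-pairing the former partners of $a$ and $b$) without disturbing the finitely many prescribed values of a basic neighborhood or creating new fixed points. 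Note also that injectivity of $h$ is what upgrades ``$h(a)\in B$ for arbitrarily large $a\in A$'' to $h[A]\cap B\in[\w]^\w$, and that for infinite $\A$ the family $\{N_{A,B}:A,B\in\A\}$ still has size $|\A|<\cov(\M)$, so the covering argument goes through. In short: three routine dualizations matching the paper, plus a correct proof of the one inequality the paper asserts but does not actually display.
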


\begin{proof} The inequality $\Dsigma\le\Ddelta$ follows immediately from the definitions. The other inequalities are proved in the following four lemmas.

%\begin{lemma} $\Dsigma\le\Ddelta$.
%\end{lemma}

%\begin{proof} By Theorem~\ref{??}, the cardinal $\Ddelta$ is infinite. By definition of $\Ddelta$, there exists a family $\A\subset[\w]^\w$ of cardinality $|\A|=\Ddelta$ such that for every $h\in I_\s$ there exists $A\in\A$ such that $h[A]\cap A$ is finite. For every $A\in\A$ choose two disjoint infinite sets $I_A,J_A\subseteq A$. 

%Then for every $h\in I_\w$ there exists a set $A\in\A$ such that $h[A]\cap A$ is finite and hence $(I_A\cup h[I_A])\cap(J_A)$ is finite, too. Therefore, the family $\A'=\{I_A\}_{A\in\A}\cup\{J_A\}_{A\in\A}$ witnesses that $|\Dsigma|\le|\A'|\le|\A|=\Ddelta$.
%\end{proof}
 
%\begin{lemma} $\cov(\M)\le\Dsigma$.
%\end{lemma}

%\begin{proof} By the definition of $\Dsigma$, there exists a family of sets $\A\subseteq[\w]^\w$ such that $|\A|=\Dsigma$ and for every $h\in I_\w$ there are sets $A,B\in\A$ such that $(A\cup h[A])\cap B$ is finite. Observe that $$I_\w=\{h\in S_\e:h\circ h=1_\w\}\setminus\{h\in S_\w:\exists x,y\in \w\;(h(x)=x\ne y= h(y))\}$$is a crowded closed subspace of $S_\w$. For any sets $A,B\in\A$ and every $n\in\w$ consider the set $$H_{A,B,n}=\{h\in I_\w:|h[A]\cap B|\ge n\}$$ and observe that $H_{A,B,n}$ is open and dense in $I_\w$. Assuming that $|\A|=\Dsigma<\cov(\M)$, we conclude that $|\A\times\A\times\w|<\cov(\M)$ and hence the intersection $\bigcap_{n\in\w}\bigcap_{A,B\in\A}H_{A,B,n}$ is not empty and thus contains some involution $h\in I_\w$. For this involution we have $|h[A]\cap B|=\w$ for all $A,B\in\A$, which contradicts the choice of the family $\A$.
%\end{proof}

\begin{lemma} $\Ddelta\le\mathfrak d$.
\end{lemma}

\begin{proof} By the definition of $\mathfrak d$, there exists a family $\{f_\alpha\}_{\alpha\in\mathfrak d}\subseteq\w^\w$ such that for every $g\in \w^\w$ there exists $\alpha\in\mathfrak d$ such that $g\le f_\alpha$. For every $\alpha\in\mathfrak d$, choose an infinite set $A_\alpha\subseteq\w$ such that $f_\alpha(x)<y$ for any numbers $x<y$ in $A$. 

Given any $h\in I_\w$, find $\alpha\in\mathfrak d$ such that $h\le f_\alpha$. We claim that the set $h[A_\alpha]\cap A_\alpha$ is finite. Assuming the opposite and taking into account that $h$ is almost free, we can find a point $x\in h[A_\alpha]\cap A_\alpha$ such that $h(x)\ne x$.  Replacing $x$ by $h(x)$, if necessary, we can assume that $x<h(x)$. Now the choice of the set $A_\alpha$ guarantees that $f_\alpha(x)<h(x)$, which contradicts the choice of $f_\alpha$. Therefore, $h[A_\alpha]\cap A_\alpha$ is finite and the family $\A$ witnesses that $\Ddelta\le|\A|\le\mathfrak d$.
\end{proof}

\begin{lemma} $\Ddelta\le\mathfrak r$.
\end{lemma}

\begin{proof} By the definition of $\mathfrak r$, there exists a family $\mathcal R\subseteq[\w]^\w$ of cardinality $|\mathcal R|=\mathfrak r$ such that for any set $A\in[\w]^\w$ either $R\subseteq^* A$ or $R\subseteq^*\w\setminus A$.

Given any involution $h\in I_\w$ choose disjoint sets $A,B\subseteq\w$ such that $A=h[B]$ and $A\cup B=\w\setminus\mathrm{Fix}(h)$ where the set $\mathrm{Fix}(h)=\{x\in\w:h(x)=x\}$ contains at most one point. The choice of the family $\mathcal R$ yields a set $R\in\mathcal R$ such that $R\subseteq^* A$ or $R\subseteq^* B$. In both cases the set $h[R]\cap R\subseteq^*(h[A]\cap A)\cup(h[B]\cap B)=\emptyset$ is finite. Therefore, $\Ddelta\le|\mathcal R|\le\mathfrak r$.
\end{proof}

\begin{lemma} $\Ddelta\le\non(\mathcal N)$.
\end{lemma}

\begin{proof}  Write the set $\w$ as the union $\w=\bigcup_{n\in\w}K_n$ of pairwise disjoint sets of cardinality $K_n=(n+1)!$. On each set $K_n$ consider the uniformly distributed probability measure $\lambda_n=\frac1{n!}\sum_{x\in K_n}\delta_x$. Let $\lambda=\otimes_{n\in\w}\lambda_n$ be the tensor product of the measures $\lambda_n$. It follows that $\lambda$ is an atomless probability Borel measure on the compact metrizable space $K=\prod_{n\in\w}K_n$.
By \cite[17.41]{Ke}, the measure $\lambda$ is Borel-isomorphic to the Lebesgue measure on the unit interval $[0,1]$. Consequently the $\sigma$-ideal $\mathcal N_\lambda=\{A\subseteq K:\lambda(A)=0\}$ has uniformity number $\non(\mathcal N_\lambda)=\non(\mathcal N)$ and we can fix a subset $A\subseteq K$ of cardinality $|A|=\non(\mathcal N)$ such that $A\notin\mathcal N_\lambda$. Then the family $\A=\{x[\w]:x\in A\}\subseteq[\w]^\w$ has cardinality $|\A|\le|A|=\non(\mathcal N)$.

Now take any involution $h\in I_\w\subseteq S_\w$ and consider the set 
$$Z_h=\{x\in K:\{i\in\w:x(i)\ne h(x(i))\in x[\w]\}\in[\w]^\w\}.$$ Repeating the argument form the proof of Lemma~\ref{l:Brian}. we can show that $\lambda(Z_h)=0$ and hence there exists $x\in A\setminus Z_h$. By the definition of the set $Z_h\not\ni x$,  the set $\{i\in\w:x(i)\ne h(x(i))\in x[\w]\}$ is finite and so is the set $h[x[\w]]\cap x[\w]$. Now we see that $\Ddelta\le|\A|\le\non(\mathcal N)$.
\end{proof}
\end{proof}

The cardinal $\Ddelta$ has an interesting large-scale characterization, which will be used in the proof of Lemma~\ref{l:Ddelta-last}.

\begin{theorem}\label{t:Ddelta} $\Ddelta=\min\{|\A|:\A\subseteq[\w]^\w\;\wedge\;\forall E\in\IE_\w\;\exists A\in\A\;(\mbox{\rm $A$ is $\{E\}$-discrete})\}$.
\end{theorem}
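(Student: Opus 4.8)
The plan is to write $\mathfrak D$ for the cardinal on the right-hand side and prove $\Ddelta\le\mathfrak D$ and $\mathfrak D\le\Ddelta$ separately. The bridge is the identification of an almost free involution $h\in I_\w$ with the cellular entourage $E_h=\{(x,y)\in\w\times\w:y\in\{x,h(x)\}\}\in\IE_2^\circ$: a set $A\in[\w]^\w$ is $\{E_h\}$-discrete exactly when $h[A]\cap A$ is finite, the unique possible fixed point of $h$ contributing at most one element to $h[A]\cap A$. With this correspondence $\Ddelta\le\mathfrak D$ is immediate, since any $\A$ witnessing $\mathfrak D$ defeats in particular every $E_h$ with $h\in I_\w$, and hence witnesses the defining property of $\Ddelta$.

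For $\mathfrak D\le\Ddelta$ I would first pass from finitary entourages to cellular ones of ball size $\le2$. Writing $\Ddelta(\E)$ for the least size of a family $\A\subseteq[\w]^\w$ such that every $E\in\E$ has an $\{E\}$-discrete member of $\A$, I claim $\Ddelta=\Ddelta(\IE_2^\circ)$. Indeed $\{E_h:h\in I_\w\}\subseteq\IE_2^\circ$ gives $\Ddelta\le\Ddelta(\IE_2^\circ)$, while the reverse follows from an \emph{extension trick}: a single $F\in\IE_2^\circ$ partitions $\w$ into blocks of size $\le2$, which complete to a near-perfect matching, i.e. to some $h\in I_\w$ with $F\subseteq E_h$; as $\{E_h\}$-discreteness implies $\{F\}$-discreteness, a family witnessing $\Ddelta$ also defeats $F$. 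It then remains to prove $\mathfrak D=\Ddelta(\IE_\w)\le\Ddelta(\IE_2^\circ)$. By Lemma~\ref{l:decompose} every finitary $E$ satisfies $E\subseteq F_1\cup\dots\cup F_k$ with $F_i\in\IE_2^\circ$, and by the pigeonhole principle a set that is simultaneously $\{F_i\}$-discrete for all $i$ is $\{E\}$-discrete.

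The main obstacle is precisely this simultaneity. Unlike $\IE_\w$, the family $\IE_2^\circ$ is not directed under inclusion (a union of two matchings has ball size up to $3$ and is not cellular), so a $\Ddelta$-witness only supplies, for each $F_i$ separately, a possibly different discrete set $A_i$, with no control on whether $\bigcap_iA_i$ is infinite. This is exactly the feature absent from the primal identity $\Adelta(\IE_2^\circ)=\Adelta(\IE_\w)$ of Theorem~\ref{t:main}(4), where the order of quantifiers hands one a single set discrete for an entire small family for free. To close the gap I would not take an arbitrary witness but manufacture one adapted to finite common refinement: fixing $E$ and its decomposition, one wants the sets defeating $F_1,\dots,F_k$ to be chosen with an infinite common intersection, after which the pigeonhole inclusion applies to that intersection. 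Carrying this out while keeping the family of size $\Ddelta$ --- e.g. by a transfinite construction producing a suitably centred or downward-refining family, or by recasting Lemma~\ref{l:decompose} as a Galois--Tukey morphism and verifying that it transports the $\mathfrak b$-type invariant and not merely the $\mathfrak d$-type one --- is where I expect the real work to lie; the remaining steps (symmetrising $E$ and organising $F_1,\dots,F_k$ via a bounded-degree edge colouring) are routine.
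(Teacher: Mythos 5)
Your proposal is correct as far as it goes, but it stops exactly at the theorem's only nontrivial step, so it has a genuine gap. The easy inequality $\Ddelta\le\mathfrak D$ via $h\mapsto E_h$ and the reduction to matchings ($\Ddelta=\Ddelta(\IE_2^\circ)$ by the extension trick) coincide with the paper's Lemmas~\ref{l:Ddelta} and \ref{l:start-delta}. You also diagnose the obstacle accurately: after decomposing $E\subseteq F_1\cup\dots\cup F_k$ by Lemma~\ref{l:decompose}, a $\Ddelta$-witness hands you a separate discrete set $A_i$ for each matching $F_i$, with no reason for $\bigcap_i A_i$ to be infinite, and $\IE_2^\circ$ is not directed. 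But ``manufacture a witness adapted to finite common refinement'' \emph{is} the problem, not a solution to it; neither the centred-family suggestion nor the Galois--Tukey remark is carried out, and the cardinality bookkeeping you defer is precisely where a naive intersection argument fails.

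The paper closes this gap by \emph{nesting} rather than intersecting, which is the concrete realization of your ``downward-refining'' hunch. The key observation is that Lemma~\ref{l:start-delta} relativizes: for any infinite $A\subseteq\w$ there is a family $\mathcal B_A\subseteq[A]^\w$ of size $\Ddelta$ such that every $E\in\IE_2^\circ[A]$ has an $\{E\}$-discrete member, because the construction only uses the combinatorics of involutions of $A$. The paper then sets $\E_0=\IE^\circ_2$ and $\E_{k+1}=\{E\cup F:E\in\E_k,\ F\in\IE_2^\circ\}$, and builds families $\A_k$ of size $\le\Ddelta$ by induction (Lemma~\ref{l:step}): put $\A_{k+1}=\bigcup_{A\in\A_k}\mathcal B_A$. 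Given $E=E'\cup E''$ with $E'\in\E_k$ and $E''\in\IE^\circ_2$, one first chooses $A\in\A_k$ that is $\{E'\}$-discrete, and then $B\in\mathcal B_A$ that is $\{E''\cap(A\times A)\}$-discrete; since $B\subseteq A$ and subsets of discrete sets are discrete, the single set $B$ is simultaneously $\{E'\}$- and $\{E''\}$-discrete, hence $\{E\}$-discrete. This handles the matchings one at a time, with the second witness chosen \emph{inside} the first, so no intersection of independently chosen sets is ever needed. The cardinal arithmetic works because $\Ddelta$ is infinite (indeed $\Ddelta\ge\cov(\M)$ by Theorem~\ref{t:duals}): $|\A_{k+1}|\le\Ddelta\cdot\Ddelta=\Ddelta$ and $|\bigcup_{k\in\w}\A_k|\le\w\cdot\Ddelta=\Ddelta$. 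Finally, Lemma~\ref{l:decompose} places every $E\in\IE_\w$ below some member of $\bigcup_{k\in\w}\E_k$, and $\{E'\}$-discreteness for $E'\supseteq E$ implies $\{E\}$-discreteness, giving $\mathfrak D\le\Ddelta$. If you replace your final paragraph with this relativize-and-nest induction, your argument becomes the paper's proof.
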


\begin{proof} We should prove that $\Ddelta=\tilde\Adelta$ where
$$\tilde\Adelta=\min\{|\A|:\A\subseteq[\w]^\w\;\wedge\;\forall E\in\IE_\w\;\exists A\in\A\;(\mbox{$A$ is $\{E\}$-discrete})\}.
$$

The equality $\Ddelta=\tilde\Adelta$ follows from Lemmas~\ref{l:Ddelta} and \ref{l:Ddelta2} proved below.

\begin{lemma}\label{l:Ddelta} $\Ddelta\le\tilde\Adelta$.
\end{lemma}

\begin{proof} By definition of the cardinal $\tilde\Adelta$, there exists a family $\A\subseteq[\w]^\w$ of cardinality $|\A|=\tilde\Adelta$ such that for every $E\in\IE_\w$ there exists an $\{E\}$-discrete set $A\in\A$. Given any involution $h\in I_\w$, consider the entourage $E_h=\{(x,y)\in\w\times\w:y\in\{x,h(x)\}\}\in \IE_2^\circ\subset\IE_\w$ and find a set $A\in\A$ such that $A$ is $\{E_h\}$-discrete, which means that the set
$$\{x\in A:E_h(x)\cap A\ne\{x\}\}=\{x\in A:x\ne h(x)\in A\}$$ is finite. The family $\A$ witnesses that $\Ddelta\le|\A|=\tilde\Adelta$.
\end{proof}

The proof of the inequality $\tilde\Adelta\le\Ddelta$ is a bit more complicated.

\begin{lemma}\label{l:start-delta}  There exists a family $\A\subseteq[\w]^\w$ of cardinality $|\A|=\Ddelta$ such that for every $E\in\E_2^\circ$ there exists an $\{E\}$-discrete set $A\in \A$.
\end{lemma}

\begin{proof} By definition of the cardinal $\Adelta$, there exists a family $\A\subseteq[\w]^\w$ of cardinality $|\A|=\Adelta$ such that for any involution $h\in I_\w$ there exists a set $A\in\A$ such that $h[A]\cap A$ is finite. Given any entourage $E\in\IE^\circ_2$, choose an involution $h_E\in I_\w$ such that $E(x)\subseteq\{x,h_E(x)\}$ for every $x\in \w$.  By the choice of $\A$, for the involution $h_E$ there exists a set $A\in\A$ such that $h_E[A]\cap A$ is finite. Then the set $\{x\in A:E(x)\cap A\ne\{x\}\}\subseteq\{x\in A:x\ne h_E(x)\in A\}$ is finite and hence the set $A$ is $\{E\}$-discrete.
\end{proof}

Consider the sequence $(\E_k)_{k\in\w}$ defined by the recursive formula $\E_0=\IE^\circ_2$ and $\E_{k+1}=\{E\cup F:E\in\E_k\;\wedge\;F\in\IE_2^\circ\}$ for $k\in\w$.

\begin{lemma}\label{l:step} For every $k\in\w$ there exists a family $\A_k\subseteq[\w]^\w$ of cardinality $|\A_k|\le\Ddelta$ such that for every $E\in\E_k$ there exists an $\{E\}$-discrete set $A\in \A_k$.
\end{lemma}

\begin{proof} The case $k=0$ follows from Lemma~\ref{l:start-delta}. Assume that for some $k\in\w$ we have constructed a family $\A_k$ of cardinality $|\A_k|\le\Ddelta$ such that  for every $E\in\E_k$ there exists an $\{E\}$-discrete set $A\in \A_k$. For every $A\in\A_k$,  Lemma~\ref{l:start-delta} yields a family $\mathcal B_A\subseteq[A]^\w$ of cardinality $|\mathcal B_A|=\Ddelta$ such that for every $E\in\IE_2^\circ[A]$ there exists an $\{E\}$-discrete set $B\in \mathcal B_\A$. 

It is clear that the family $\A_{k+1}=\bigcup_{A\in\A_k}\mathcal B_A$ has cardinality $|\A_{k+1}|\le\Ddelta$. Given any entourage $E\in\E_{k+1}$, find entourages $E'\in\E_k$ and $E''\in\IE^\circ_2$ such that $E=E'\cup E''$. By the choice of $\A_k$, there exists an $\{E'\}$-discrete set $A\in\A_k$, which means that the set 
$$\{x\in A:E''(x)\cap A\ne\{x\}\}$$ is finite. Since the entourage $E''_A=E''\cap(A\times A)$ belongs to the family $\IE^\circ_2[A]$, there exists an $E''_A$-discrete set $B\in\mathcal B_A$. Now observe that the set
\begin{multline*}\{x\in B:E(x)\cap B\ne\{x\}\}=\{x\in B:E'(x)\cap B\ne\{x\}\}\cup\{x\in B:E''(x)\cap B\ne\{x\}\}\subseteq\\
\{x\in A:E'(x)\cap A\ne\{x\}\}\cup\{x\in B:E''(x)\cap B\ne\{x\}\}
\end{multline*}
is finite, which means that the set $B$ is $\{E\}$-discrete. 
\end{proof}

\begin{lemma}\label{l:Ddelta2} $\tilde\Adelta\le\Ddelta$.
\end{lemma}

\begin{proof} By Lemma~\ref{l:step}, for every $k\in\w$ there exists a family $\A_k\subseteq[\w]^\w$ of cardinality $|\A_k|\le\Ddelta$ such that for every entourage $E\in\E_k$ there exists an $\{E\}$-discrete set $A\in\A_k$. By Theorem~\ref{t:duals}, the cardinal $\Ddelta$ is infinite and hence the  family $\A=\bigcup_{k\in\w}\A_k$ has cardinality $|\A|\le\sum_{n\in\w}|\A_k|\le\Ddelta$. By Lemma~\ref{l:decompose}, every entourage $E\in\IE_\w$ is contained in some entourage $E'\in\bigcup_{k\in\w}\E_k$. For the entourage $E'$ there exists an $\{E'\}$-discerete set $A\in\A$, which is also $\{E\}$-discrete. The family $\A$ witnesses that $\tilde\Adelta\le|\A|\le\Ddelta$.
\end{proof}
\end{proof}

\begin{problem} Find a large-scale characterization of the cardinal $\Dsigma$.
\end{problem}

\section{Cardinal characteristics of the poset of non-discrete cellular finitary entourages on $\w$}\label{s:poset}

In this section we evaluate some cardinal characteristics of the poset $\IE_\w^\bullet$ of nontrivial cellular finitary entourages on $\w$. The set $\IE_\w^\bullet$ is endowed with the natural inclusion order (i.e., $E\le F$ iff $E\subseteq F$).

Let $P$ be a poset, i.e., a set endowed with the partial order $\le$. For a point $x\in P$ let 
$${\downarrow}x=\{p\in P:p\le x\}\mbox{ \ and \ }{\uparrow}x=\{p\in P:x\le p\}$$be the {\em lower} and {\em upper sets} of the point $x$. For a subset $S\subseteq P$, let 
$${\downarrow}S=\bigcup_{s\in S}{\downarrow}s\mbox{ \ and \ }{\uparrow}S=\bigcup_{s\in S}{\uparrow}s$$be the {\em lower} and {\em upper sets} of the set $S$ in $P$. 

We shall be interested in the following cardinal characteristics of a poset $P$:
\begin{itemize}
\item[] ${\downarrow}(P):=\min\{|C|:C\subseteq P\;\wedge \;{\downarrow}C=P\}$;
\item[] ${\uparrow}(P):=\min\{|C|:C\subseteq P\;\wedge\;{\uparrow}C=P\}$;
\item[] ${\uparrow}\!{\downarrow}(P):=\min\{|C|:C\subseteq P\;\wedge \;{\uparrow\downarrow}C=P\}$;
\item[] ${\downarrow}\!{\uparrow}(P):=\min\{|C|:C\subseteq P\;\wedge\;{\downarrow\uparrow}C=P\}$.
\item[] ${\downarrow}\!{\uparrow}\!{\downarrow}(P):=\min\{|C|:C\subseteq P\;\wedge \;{\downarrow\uparrow\downarrow}C=P\}$;
\item[] ${\uparrow}\!{\downarrow}\!{\uparrow}(P):=\min\{|C|:C\subseteq P\;\wedge\;{\uparrow\downarrow\uparrow}C=P\}$.
\end{itemize}

Proceeding in this fashion, we could define the cardinal characteristics ${\uparrow\!\downarrow\!\uparrow\!\downarrow}(P)$ and ${\downarrow\!\uparrow\!\downarrow\!\uparrow}(P)$ and so on.

Let us also consider the cardinal characteristics
\begin{itemize}
\item[] ${\uparrow\!\uparrow}(P)=\sup\{|C|:C\subseteq P\;\wedge\;\forall x,y\in C\;\;(x\ne y\;\Ra\;{\uparrow}x\cap{\uparrow}y=\emptyset)\}$,
\item[] ${\downarrow\!\downarrow}(P)=\sup\{|C|:C\subseteq P\;\wedge\;\forall x,y\in C\;\;(x\ne y\;\Ra\;{\downarrow}x\cap{\downarrow}y=\emptyset)\}$,
\end{itemize}
which are counterparts of the cellularity in topological spaces.

The order relation between these cardinals characteristics are described in the following diagram (in which an arrow $\alpha\to\beta$ between cardinals indicates that $\alpha\le\beta$).
$$
\xymatrix{
{\uparrow\!\downarrow\!\uparrow}(P)\ar[r]\ar[rd]&{\downarrow\!\uparrow}(P)\ar[r]\ar[rrd]&{\uparrow\!\uparrow}(P)\ar[r]&{\downarrow}(P)\\
{\downarrow\!\uparrow\!\downarrow}(P)\ar[r]\ar[ru]&{\uparrow\!\downarrow}(P)\ar[r]\ar[rru]&{\downarrow\!\downarrow}(P)\ar[r]&{\uparrow}(P)\\
}
$$
We are interested in evaluating these cardinal characteristics for the poset $\IE_\w^\bullet$ consisting of all nontrivial cellular finitary entourages on $\w$. We recall that an entourage $E$ on $\w$ is {\em nortrivial} if the set $\{x\in X:|E(x)|>1\}$ is infinite.

The following theorem is the main result of this section.

\begin{theorem}\label{t:poset}{\color{white}\hfill.} \begin{enumerate}
\item ${\downarrow}\!{\uparrow}\!{\downarrow}(\IE_\w^\bullet)={\uparrow}\!{\downarrow}\!{\uparrow}(\IE_\w^\bullet)=1$.
\item ${\uparrow\!\uparrow}(\IE_\w^\bullet)={\downarrow\!\downarrow}(\IE_\w^\bullet)={\downarrow}(\IE_\w^\bullet)={\uparrow}(\IE_\w^\bullet)=\mathfrak c$.
\item $\cov(\M)\le{\downarrow}\!{\uparrow}(\IE_\w^\bullet)\le \Ddelta$.
\item $\Asigma\le{\uparrow}\!{\downarrow}(\IE_\w^\bullet)\le\non(\M)$.
\end{enumerate}
\end{theorem}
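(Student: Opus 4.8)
The plan is to translate each characteristic of the poset $\IE_\w^\bullet$ into the combinatorial language of \emph{shared} and \emph{chained} edges. Viewing a cellular finitary entourage as the uniformly finite partition it induces (equivalently, as its edge set $\{(x,y):y\in E(x)\}$), two elements $E,F$ have a common upper bound in $\IE_\w^\bullet$ iff the transitive closure of $E\cup F$ is still finitary, and a common nontrivial lower bound iff $E\cap F$ is nontrivial, i.e. $E,F$ share infinitely many edges. Thus ${\downarrow}\!{\uparrow}C=P$ asserts that every $p\in P$ is \emph{compatible} (has a finitary join) with some $c\in C$, while ${\uparrow}\!{\downarrow}C=P$ asserts that every $p$ shares infinitely many edges with some $c\in C$. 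The single recurring device is: from prescribed pairs one forms a matching $d$, and the transitive closure of $p\cup d$ stays finitary provided the chosen pairs are \emph{spread out}, meeting each $p$-class (resp. $c$-class) boundedly often. Keeping these closures finitary — cellularity is not preserved by joins — is the repeated technical obstacle.

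For part (1), fix any $c\in\IE_\w^\bullet$ (say, the partition into consecutive pairs). To show ${\downarrow}\!{\uparrow}\!{\downarrow}c=P$ I must, given arbitrary $p$, produce $d\le c$ and a common upper bound $q$ of $p$ and $d$. Since $c$ has infinitely many classes of size $\ge 2$ and $p$ is finitary, I would greedily choose a pair inside infinitely many distinct $c$-classes whose endpoints fall in pairwise distinct $p$-classes (only finitely many points are ``used'' at each step, so fresh ones always remain); letting $d$ be this matching and $q$ the transitive closure of $p\cup d$, each $p$-class is touched at most once, so $q$ has classes of size $\le 2\sup_x|p(x)|$ and lies in $\IE_\w^\bullet$. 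The statement ${\uparrow}\!{\downarrow}\!{\uparrow}c=P$ is symmetric: pick a spread-out matching $r\le p$ whose endpoints lie in distinct $c$-classes and take $q$ the closure of $c\cup r$. Both give value $1$. For part (2), since $|\IE_\w^\bullet|=\mathfrak c$ all four cardinals are $\le\mathfrak c$, and by the diagram it suffices to prove ${\uparrow\!\uparrow}\ge\mathfrak c$ and ${\downarrow\!\downarrow}\ge\mathfrak c$. For ${\downarrow\!\downarrow}$ I would exhibit $\mathfrak c$ pairwise almost disjoint perfect matchings (a standard almost-disjoint / tree construction): any two share finitely many edges, so their meet is trivial. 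For ${\uparrow\!\uparrow}$ I would exhibit $\mathfrak c$ matchings whose pairwise unions are non-finitary (a perfect-set construction interleaving the two matchings unboundedly), so no finitary equivalence relation contains two of them. These are the routine parts.

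For part (3), the bound ${\downarrow}\!{\uparrow}\le\Ddelta$ would use the large-scale characterization of $\Ddelta$ in Theorem~\ref{t:Ddelta}: fix $\A\subseteq[\w]^\w$ with $|\A|=\Ddelta$ so that every finitary $E$ admits an $\{E\}$-discrete $A\in\A$, and for each $A$ let $c_A$ be a fixed matching on $A$. Given $p\in P$, an $\{p\}$-discrete $A$ meets cofinitely many $p$-classes in a single point, so the transitive closure of $p\cup c_A$ has classes of size $\le 2\sup_x|p(x)|$, making $c_A$ compatible with $p$; hence $\{c_A\}_{A\in\A}$ witnesses ${\downarrow}\!{\uparrow}\le\Ddelta$. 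For $\cov(\M)\le{\downarrow}\!{\uparrow}$ I would run a Baire-category argument dual to the proof of $\Asigma\le\non(\M)$: in the Polish space of fixed-point-free involutions, for each fixed $c$ and each $N$ the set of $p$ for which $p\cup c$ has a component of size $>N$ is open dense (using the infinitely many nontrivial $c$-classes to chain an alternating $p$--$c$ path), so the set of $p$ compatible with $c$ is meager. If $|C|<\cov(\M)$ these meager sets cannot cover the space, yielding a nontrivial $p$ incompatible with every $c\in C$, i.e. $p\notin{\downarrow}\!{\uparrow}C$.

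For part (4), to get ${\uparrow}\!{\downarrow}\le\non(\M)$ I would take a non-meager set $M$ of fixed-point-free involutions with $|M|=\non(\M)$ and set $C=\{E_m:m\in M\}$ where $E_m=\{(x,y):y\in\{x,m(x)\}\}$. Given nontrivial $p$, choose an infinite matching $\mu\subseteq p$; the set of involutions agreeing with $\mu$ on infinitely many edges is a dense $G_\delta$, so some $m\in M$ meets it and then $E_m\cap p\supseteq E_m\cap\mu$ is nontrivial, placing $p$ in ${\uparrow}\!{\downarrow}C$. For $\Asigma\le{\uparrow}\!{\downarrow}$, let $C$ satisfy ${\uparrow}\!{\downarrow}C=P$ and verify that $C$ is an $\Asigma(\IE_\w)$-witnessing family: given disjoint $A,B\in[\w]^\w$, let $p$ be a matching pairing points $a_i\in A$ with points $b_i\in B$; some $c\in C$ has $c\cap p$ nontrivial, so infinitely many $b_i$ lie in $c[A]\cap c[B]$, making that intersection infinite. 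By Theorem~\ref{t:main}(5), $\Asigma=\Asigma(\IE_\w)\le|C|$. I expect the main obstacle to be the category half of (3): establishing that compatibility with a fixed $c$ is meager, which requires showing the generic involution chains the nontrivial classes of $c$ into components of unbounded size.
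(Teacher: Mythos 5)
Your proposal is correct and its overall architecture coincides with the paper's: the same translation of ${\uparrow}\!{\downarrow}$ and ${\downarrow}\!{\uparrow}$ into shared edges versus finitary joins, the same greedy spread-out matchings for part (1), the same almost disjoint family for ${\downarrow\!\downarrow}(\IE_\w^\bullet)=\mathfrak c$, the same two Baire category arguments (open dense sets of involutions chaining alternating $p$--$c$ paths for $\cov(\M)\le{\downarrow}\!{\uparrow}(\IE_\w^\bullet)$, and a nonmeager set of size $\non(\M)$ meeting a dense $G_\delta$ for ${\uparrow}\!{\downarrow}(\IE_\w^\bullet)\le\non(\M)$), and the same appeals to Theorem~\ref{t:Ddelta} and Theorem~\ref{t:main}(5). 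Three local deviations are worth recording. First, for ${\downarrow}\!{\uparrow}(\IE_\w^\bullet)\le\Ddelta$ you bypass Lemma~\ref{l:cellular} by taking the transitive closure of $p\cup c_A$ directly; this is more elementary and does work, but your uniform bound $2\sup_x|p(x)|$ on the classes is not literally correct: the finitely many points $x\in A$ with $p(x)\cap A\ne\{x\}$ can chain several $p$-classes together, and one must note that only finitely many components pass through such ``hub'' classes and each of these components is finite, so the supremum of class sizes is still finite and the closure is finitary. Second, for ${\uparrow}\!{\downarrow}(\IE_\w^\bullet)\le\non(\M)$ you work in the Polish space of free involutions with a fixed matching $\mu\subseteq p$, whereas the paper uses the permutation-block device $C_f=\bigcup_{y}f[\varphi^{-1}(y)]^2$ and produces a common lower bound inside the balls of $E$; both are valid, and yours is marginally simpler. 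Third --- the one place where I would push back --- you dismiss ${\uparrow\!\uparrow}(\IE_\w^\bullet)\ge\mathfrak c$ as routine, but it is in fact the most delicate construction in the paper's proof (Lemma~\ref{l:6.5} with Claim~\ref{cl:under}): almost disjointness is useless here, since one must guarantee that \emph{every} pair from a continuum-sized family has non-finitary join, and the paper achieves this by coding entourages by $x\in 2^\w$ and planting the two interleaving patterns $\mathcal P_0,\mathcal P_1$ on blocks $\psi^{-1}(n)$ containing arbitrarily long intervals, so that any coordinate of disagreement forces chaining of unbounded length. Your ``perfect-set construction interleaving the two matchings unboundedly'' is precisely this mechanism, so the idea is right, but that step carries the real content and deserves the explicit two-pattern block coding rather than a one-line dismissal.
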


We divide the proof of Theorem~\ref{t:poset} into eight lemmas.

\begin{lemma} ${\downarrow}\!{\uparrow}\!{\downarrow}E=\IE_\w^\bullet $ for any $E\in\IE_\w^\bullet $. Consequently, ${\downarrow}\!{\uparrow}\!{\downarrow}(\IE_\w^\bullet )=1$.
\end{lemma}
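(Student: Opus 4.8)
The plan is to prove the pointwise statement that ${\downarrow}\!{\uparrow}\!{\downarrow}E=\IE_\w^\bullet$ for a single entourage $E\in\IE_\w^\bullet$, since then the singleton $\{E\}$ already witnesses ${\downarrow}\!{\uparrow}\!{\downarrow}(\IE_\w^\bullet)=1$. First I would unpack the three operators: an entourage $H\in\IE_\w^\bullet$ lies in ${\downarrow}\!{\uparrow}\!{\downarrow}E$ if and only if there exist $F,G\in\IE_\w^\bullet$ with $F\subseteq E$, $F\subseteq G$ and $H\subseteq G$. Since $E$ and $G$ are equivalence relations, so is their intersection $G\cap E$, which is automatically cellular and finitary (being contained in $E$); hence the condition reduces to finding, for an arbitrary $H\in\IE_\w^\bullet$, a single $G\in\IE_\w^\bullet$ with $H\subseteq G$ and with $G\cap E$ nontrivial. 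Indeed, if such a $G$ exists, then $F:=G\cap E$ is a nontrivial cellular finitary entourage with $F\subseteq E$ and $F\subseteq G$, so that $G\in{\uparrow}F\subseteq{\uparrow}\!{\downarrow}E$ and $H\in{\downarrow}G\subseteq{\downarrow}\!{\uparrow}\!{\downarrow}E$.

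To construct $G$, I would enlarge $H$ by finitely interacting $E$-edges. Let $m=\sup_{x\in\w}|H(x)|<\w$, so that the $H$-classes partition $\w$ into sets of size $\le m$. Since $E$ is nontrivial and finitary, it has infinitely many (finite) classes of size $\ge 2$. I would then choose inductively a sequence of pairs $(a_n,b_n)$ with $a_n\ne b_n$ lying in a common $E$-class, keeping a finite record $U_n$ of the $H$-classes already used, adding $H(a_n)$ and $H(b_n)$ to it at each step, and always selecting a nontrivial $E$-class whose points avoid $\bigcup U_n$. This is possible because $\bigcup U_n$ is finite and therefore meets only finitely many $E$-classes, leaving infinitely many available. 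The construction guarantees that the classes $H(a_n)$ and $H(b_n)$ are pairwise distinct across distinct indices $n$. Setting $G$ to be the smallest equivalence relation containing $H\cup\bigcup_{n\in\w}\{a_n,b_n\}^2$, I obtain $H\subseteq G$, while the pairs $(a_n,b_n)\in G\cap E$ are infinitely many and pairwise disjoint, so $G\cap E$ is nontrivial.

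The main obstacle is verifying that $G$ is finitary, i.e. that forming the transitive closure does not create infinite classes. Here the freshness of the chosen $H$-classes is exactly what is needed: contracting each $H$-class to a single node turns the added pairs into a matching on the contracted nodes, since each $H$-class is incident to at most one non-loop edge $\{a_n,b_n\}$ (the classes $H(a_n)$ and $H(b_n)$ being used at most once across distinct indices, while a coincidence $H(a_n)=H(b_n)$ only produces a harmless loop). Consequently every $G$-class is either a single $H$-class or the union of two $H$-classes joined by one edge, whence $|G(x)|\le 2m$ for all $x$ and $G$ is finitary. As $G\supseteq H$ is nontrivial, we get $G\in\IE_\w^\bullet$, which finishes the verification and thus shows ${\downarrow}\!{\uparrow}\!{\downarrow}E=\IE_\w^\bullet$ and ${\downarrow}\!{\uparrow}\!{\downarrow}(\IE_\w^\bullet)=1$.
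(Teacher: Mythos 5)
Your proof is correct and follows essentially the same route as the paper's: there, for a given $H$ (called $F$ in the paper) one picks points $x_n$ with $|E(x_n)|>1$ and $x_n\notin\bigcup_{k<n}EFE(x_k)$, and takes $F'=F\cup\bigcup_{n\in\w}\big(FE(x_n)\times FE(x_n)\big)$ as your $G$ and $E'=\Delta_\w\cup\bigcup_{n\in\w}\big(E(x_n)\times E(x_n)\big)$ in place of your $G\cap E$. Your deviations are only cosmetic: single $E$-pairs $\{a_n,b_n\}$ instead of whole $E$-balls, freshness tracked through used $H$-classes (with the matching/contraction argument replacing the paper's explicit saturation of the added balls by $F$), and the lower witness obtained automatically as $G\cap E$ rather than written out explicitly.
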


\begin{proof} Given any entourage $F\in\IE_\w^\bullet $, construct inductively a sequence of points $\{x_n\}_{n\in\w}\subseteq\w$  such that $|E(x_n)|>1$ and $x_n\notin\bigcup_{k<n}EFE(x_k)=\emptyset$ for any $n\in\w$.

Consider the cellular finitary entourages $$
E'=\Delta_\w\cup\bigcup_{n\in\w}\big(E(x_n)\times E(x_n)\big)\mbox{ \ and \ }F'=F\cup\bigcup_{n\in\w}\big(FE(x_n)\times FE(x_n)\big).
$$ It is clear that  $E'\subseteq E$, $E'\subseteq F'$ and $F\subseteq F'$, which implies $F\in  {\downarrow}\!{\uparrow}\!{\downarrow}E.$
\end{proof}

\begin{lemma} ${\uparrow}\!{\downarrow}\!{\uparrow}E=\IE_\w^\bullet $ for any $E\in\IE_\w^\bullet $. Consequently, ${\uparrow}\!{\downarrow}\!{\uparrow}(\IE_\w^\bullet )=1$.
\end{lemma}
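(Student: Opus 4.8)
The plan is to prove the pointwise statement ${\uparrow}{\downarrow}{\uparrow}E=\IE_\w^\bullet$ for an arbitrary fixed $E\in\IE_\w^\bullet$; since the poset is nonempty, the singleton $\{E\}$ then witnesses ${\uparrow}{\downarrow}{\uparrow}(\IE_\w^\bullet)=1$. First I would unwind the iterated operators. For a given $F\in\IE_\w^\bullet$, membership $F\in{\uparrow}{\downarrow}{\uparrow}E$ means exactly that there are entourages $G,H\in\IE_\w^\bullet$ with $H\in{\uparrow}E$, $G\in{\downarrow}H$, and $F\in{\uparrow}G$; that is, I must produce nontrivial cellular finitary $G,H$ satisfying
$$E\subseteq H,\qquad G\subseteq H,\qquad G\subseteq F.$$
Thus the task is to find a common lower bound $G$ of $F$ and some upper bound $H$ of $E$. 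This is the formal dual of the preceding lemma, but the lattice $\IE_\w^\bullet$ of equivalence relations is \emph{not} self-dual (meets are intersections, but joins require transitive closure), and that asymmetry is exactly where the work lies.

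Next I would do the selection. Using the convention $E[A]=\bigcup_{a\in A}E(a)$ (and $E=E^{-1}$ by cellularity), I would choose representatives $x_n$ of pairwise ``far apart'' nontrivial $F$-classes. Concretely, set $S_{-1}=\emptyset$ and inductively pick $x_n$ with $|F(x_n)|>1$ and $F(x_n)\cap E[S_{n-1}]=\emptyset$, where $S_{n-1}=\bigcup_{k<n}E[F(x_k)]$. This is possible because $F$ is nontrivial and finitary, so it has infinitely many non-singleton classes, which are pairwise disjoint, and only finitely many of them can meet the finite set $E[S_{n-1}]$. The defining condition (via symmetry of $E$) guarantees that the $E$-saturations $E[F(x_n)]$ are pairwise disjoint. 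I would then define
$$G=\Delta_\w\cup\bigcup_{n\in\w}\big(F(x_n)\times F(x_n)\big),\qquad H=E\cup\bigcup_{n\in\w}\big(E[F(x_n)]\times E[F(x_n)]\big).$$

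Finally I would verify the three inclusions and the membership $G,H\in\IE_\w^\bullet$. The inclusions are immediate: $F(x_n)\times F(x_n)\subseteq F$ gives $G\subseteq F$; $E\subseteq H$ is built in; and $F(x_n)\subseteq E[F(x_n)]$ yields $G\subseteq H$. That $G$ is a nontrivial cellular finitary entourage is routine, since its non-singleton classes are the disjoint finite sets $F(x_n)$. The crux of the argument, and the step I expect to be the main obstacle, is checking that $H$ genuinely lies in $\IE_\w^\bullet$: an arbitrary union $E\cup G$ of equivalence relations need not be transitively closed, and its transitive closure can fail to be finitary as chains accumulate. Here the disjointness of the $E$-saturations does both jobs at once. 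Each block $E[F(x_n)]$ is a union of full $E$-classes and is $E$-saturated, and the blocks are pairwise disjoint; hence the sets $\{E[F(x_n)]\}_{n}$ together with the $E$-classes lying outside $\bigcup_n E[F(x_n)]$ form an honest partition of $\w$, so $H$ is the corresponding equivalence relation (transitivity follows because two distinct blocks cannot share a point). Uniform boundedness $|E[F(x_n)]|\le\big(\sup_x|F(x)|\big)\cdot\big(\sup_x|E(x)|\big)$ keeps $H$ finitary, and $E\subseteq H$ keeps it nontrivial. This furnishes the required $G,H$ and completes the proof.
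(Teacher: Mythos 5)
Your proposal is correct and takes essentially the same route as the paper: you select representatives $x_n$ of nontrivial $F$-classes whose $E$-saturations are pairwise disjoint (your condition $F(x_n)\cap E[S_{n-1}]=\emptyset$ is, via the cellularity of $E$ and $F$, the paper's requirement $x_n\notin\bigcup_{k<n}FEF(x_k)$), and your entourages $G=\Delta_\w\cup\bigcup_{n\in\w}\big(F(x_n)\times F(x_n)\big)$ and $H=E\cup\bigcup_{n\in\w}\big(E[F(x_n)]\times E[F(x_n)]\big)$ are exactly the paper's $F'$ and $E'$. The only difference is that you spell out the verification that $H$ is an honest (cellular, finitary, nontrivial) equivalence relation via the disjoint $E$-saturated blocks, a step the paper compresses into ``it is clear''.
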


\begin{proof} Given any entourage $F\in\IE_\w^\bullet $, construct inductively a sequence of points $\{x_n\}_{n\in\w}\subseteq\w$  such that $|F(x_n)|>1$ and $x_n\notin\bigcup_{k<n}FEF(x_k)=\emptyset$ for any $n\in\w$.

Consider the cellular finitary entourages $$
F'=\Delta_\w\cup\bigcup_{n\in\w}\big(F(x_n)\times F(x_n)\big)\mbox{ \ and \ }E'=E\cup\bigcup_{n\in\w}\big(EF(x_n)\times EF(x_n)\big).
$$ It is clear that  $F'\subseteq F$, $F'\subseteq E'$ and $E\subseteq E'$, which implies $F\in {\uparrow}\!{\downarrow}\!{\uparrow}E.$
\end{proof}

\begin{lemma} ${\downarrow\!\downarrow}(\IE_\w^\bullet)={\uparrow}(\IE_\w^\bullet)=\mathfrak c$.
\end{lemma}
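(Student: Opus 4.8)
The plan is to reduce everything to one substantive inequality. The upper bounds are immediate: since $\IE_\w^\bullet\subseteq P(\w\times\w)$ and $|\w\times\w|=\aleph_0$, we have $|\IE_\w^\bullet|\le\mathfrak c$, so both ${\downarrow\!\downarrow}(\IE_\w^\bullet)\le\mathfrak c$ and ${\uparrow}(\IE_\w^\bullet)\le\mathfrak c$. The diagram preceding the theorem already records the general inequality ${\downarrow\!\downarrow}(P)\le{\uparrow}(P)$; I would recall its one-line proof, namely that if $C$ has pairwise disjoint lower cones and ${\uparrow}D=P$, then picking for each $c\in C$ some $d_c\in D\cap{\downarrow}c$ produces an injection $c\mapsto d_c$, whence $|C|\le|D|$. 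Consequently it suffices to establish the single inequality $\mathfrak c\le{\downarrow\!\downarrow}(\IE_\w^\bullet)$, after which the chain $\mathfrak c\le{\downarrow\!\downarrow}(\IE_\w^\bullet)\le{\uparrow}(\IE_\w^\bullet)\le\mathfrak c$ forces both cardinals to equal $\mathfrak c$.

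To prove $\mathfrak c\le{\downarrow\!\downarrow}(\IE_\w^\bullet)$ I would first translate the ${\downarrow}$-disjointness condition into a concrete statement about intersections. For $E,F\in\IE_\w^\bullet$ the intersection $E\cap F$ is again a cellular finitary entourage, and any common lower bound $Z\in\IE_\w^\bullet$ satisfies $Z\subseteq E\cap F$; hence ${\downarrow}E\cap{\downarrow}F=\emptyset$ precisely when $E\cap F$ is trivial (for then every $Z\subseteq E\cap F$ is trivial too, so cannot belong to $\IE_\w^\bullet$). Thus the task becomes producing $\mathfrak c$ nontrivial cellular finitary entourages that pairwise intersect in a trivial entourage, i.e. an almost disjoint family of infinite matchings on $\w$.

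Such a family I would realize on the binary tree. Identifying $\w$ with $2^{<\omega}$, I set, for each branch $x\in 2^\omega$,
$$E_x=\Delta_\w\cup\bigcup_{n\in\w}\{x{\restriction}2n,\,x{\restriction}(2n{+}1)\}^2.$$
The nodes $x{\restriction}0,x{\restriction}1,x{\restriction}2,\dots$ have pairwise distinct lengths, so the indicated pairs are pairwise disjoint; hence $E_x$ is an equivalence relation with all classes of size $\le 2$ and infinitely many two-point classes, giving $E_x\in\IE_\w^\bullet$. For distinct $x,y\in 2^\omega$ let $k$ be the least level at which they differ. A nondiagonal pair $(u,v)\in E_x\cap E_y$ is simultaneously an edge of the matching of $x$ and of $y$, so $u,v$ are common initial segments of $x$ and $y$ and thus lie in the finite set $\{x{\restriction}i:i\le k\}$; hence $E_x\cap E_y$ has only finitely many nondiagonal pairs and is trivial, so ${\downarrow}E_x\cap{\downarrow}E_y=\emptyset$. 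This exhibits $\mathfrak c$ pairwise ${\downarrow}$-disjoint elements of $\IE_\w^\bullet$, yielding $\mathfrak c\le{\downarrow\!\downarrow}(\IE_\w^\bullet)$ and finishing the proof.

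The only delicate point is the almost disjointness of the matchings; everything else is routine verification that the $E_x$ are genuinely cellular, finitary, and nontrivial. That delicate point is handled exactly by the classical almost-disjointness of branches of $2^{<\omega}$, transported from the level of nodes to the level of edges via the fixed pairing of consecutive initial segments, which is why the tree construction is preferable to an abstract appeal to the existence of an almost disjoint family of size $\mathfrak c$.
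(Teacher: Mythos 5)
Your proof is correct and follows essentially the same route as the paper: both arguments reduce the lemma to the chain $\mathfrak c\le{\downarrow\!\downarrow}(\IE_\w^\bullet)\le{\uparrow}(\IE_\w^\bullet)\le\mathfrak c$ and obtain the first inequality by exhibiting $\mathfrak c$ many nontrivial cellular finitary entourages with pairwise trivial intersections, hence pairwise disjoint lower cones. The only difference is cosmetic: the paper takes an abstract almost disjoint family $(A_\alpha)_{\alpha\in\mathfrak c}\subseteq[\w]^\w$ (citing Blass) and supports the nontrivial part of $E_\alpha$ inside $A_\alpha$, whereas you realize the almost disjointness explicitly via matchings along branches of the binary tree $2^{<\w}$.
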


\begin{proof} It is well-known \cite[8.1]{Blass} that there exists a family $(A_\alpha)_{\alpha\in\mathfrak c}$ of infinite subsets of $\w$ such that for any distinct ordinals $\alpha,\beta\in \mathfrak c$ the intersection $A_\alpha\cap A_\beta$ is finite. For any $\alpha\in\mathfrak c$ choose an entourage $E_\alpha\in\IE_\w^\bullet$ such that $\bigcup\{E_\alpha[x]:x\in X\;\wedge\;|E_\alpha[x]|>1\}\subseteq A_\alpha$. Observe that for any distinct ordinals $\alpha,\beta\in\mathfrak c$, the intersection ${\downarrow}E_\alpha\cap {\downarrow}E_\beta\subset\IE_\w^\bullet$ is empty, which implies $\mathfrak c=|\{E_\alpha\}|_{\alpha\in\mathfrak c}|\le{\downarrow\!\downarrow}(\IE_\w^\bullet)\le{\uparrow}(\IE^\bullet_\w)\le\mathfrak c$ and hence ${\downarrow\!\downarrow}(\IE_\w^\bullet)={\uparrow}(\IE^\bullet_\w)=\mathfrak c$. 
\end{proof}
 
\begin{lemma}\label{l:6.5} ${\uparrow\!\uparrow}(\IE_\w^\bullet)={\downarrow}(\IE_\w^\bullet)=\mathfrak c$.
\end{lemma}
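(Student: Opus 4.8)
The plan is to prove both equalities at once by squeezing ${\uparrow\!\uparrow}(\IE_\w^\bullet)$ and ${\downarrow}(\IE_\w^\bullet)$ between $\mathfrak c$ from above and below. The easy half is immediate: the diagram preceding the theorem records ${\uparrow\!\uparrow}(\IE_\w^\bullet)\le{\downarrow}(\IE_\w^\bullet)$ (given a cofinal $D$ with ${\downarrow}D=\IE_\w^\bullet$ and a set $C$ with pairwise disjoint upper sets, choosing for each $c\in C$ some $d_c\in D$ with $c\le d_c$ is injective, since $d_c=d_{c'}$ would force $d_c\in{\uparrow}c\cap{\uparrow}c'$), while $|\IE_\w^\bullet|\le\mathfrak c$ gives ${\downarrow}(\IE_\w^\bullet)\le\mathfrak c$ by taking $C=\IE_\w^\bullet$. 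Hence everything reduces to producing a family $\{E_\alpha\}_{\alpha\in\mathfrak c}\subseteq\IE_\w^\bullet$ with pairwise disjoint upper sets, that is, such that for $\alpha\ne\beta$ no $G\in\IE_\w^\bullet$ satisfies $E_\alpha\cup E_\beta\subseteq G$; this yields $\mathfrak c\le{\uparrow\!\uparrow}(\IE_\w^\bullet)$ and closes the chain.

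The key observation is that any common upper bound $G\in\IE_\w^\bullet$ of two cellular entourages $E,F$ is an equivalence relation containing $E\cup F$, hence contains the smallest equivalence relation $E\vee F$ generated by $E\cup F$; therefore if $E\vee F$ has cells of unbounded cardinality (so is not finitary), then $G$ is not finitary either, and $E,F$ have no common upper bound in $\IE_\w^\bullet$. Thus it suffices to build $\mathfrak c$ nontrivial cellular finitary entourages whose pairwise joins have cells of unbounded size.

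For the construction I would first fix an almost disjoint family $\{A_\alpha\}_{\alpha\in\mathfrak c}\subseteq[\w]^\w$ (by \cite[8.1]{Blass}) and pass to the characteristic functions $r_\alpha=\chi_{A_\alpha}\in 2^\w$; since $A_\alpha\cap A_\beta$ is finite, the set $\{k:r_\alpha(k)\ne r_\beta(k)\}$ is infinite for all $\alpha\ne\beta$. Partition $\w$ into consecutive blocks $B_k$ of even length $\ell_k$ with $\ell_k\to\infty$. On each block I would use two offset matchings: reading $B_k$ as an interval, the bit $r_\alpha(k)$ selects either the matching pairing the $0$th--$1$st, $2$nd--$3$rd,\dots points, or the one pairing the $1$st--$2$nd, $3$rd--$4$th,\dots points. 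Let $E_\alpha=\Delta_\w\cup\bigcup_k M_{\alpha,k}$, where $M_{\alpha,k}$ is the chosen matching on $B_k$; keeping all edges inside their block makes $E_\alpha$ a genuine matching, so $E_\alpha\in\IE_\w^\bullet$ (cells of size $\le2$, infinitely many nontrivial). The design is such that on a block with $r_\alpha(k)=r_\beta(k)$ the two matchings coincide and contribute only cells of size $\le2$ to the join, whereas on a block with $r_\alpha(k)\ne r_\beta(k)$ the two offset matchings interleave into a single path traversing all of $B_k$, producing a cell of size $\ell_k$. Since $r_\alpha$ and $r_\beta$ differ on infinitely many $k$ and $\ell_k\to\infty$, the join $E_\alpha\vee E_\beta$ has cells of unbounded size and is not finitary, as required.

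The main obstacle is the combinatorial design of the per-block gadget: it must be arranged so that bit-agreement yields only bounded cells while bit-disagreement produces a long path, and simultaneously each $E_\alpha$ must stay a true matching (to remain finitary and cellular) with edges confined to single blocks, so that the connected components of $E_\alpha\cup E_\beta$ are computed block-by-block and the size-$\ell_k$ paths survive. The offset-matching trick handles this, and the only global input needed is that the chosen reals differ infinitely often, which is exactly what the almost disjoint family supplies. Once this is in place, the unbounded-cell joins give $\mathfrak c\le{\uparrow\!\uparrow}(\IE_\w^\bullet)\le{\downarrow}(\IE_\w^\bullet)\le\mathfrak c$, whence ${\uparrow\!\uparrow}(\IE_\w^\bullet)={\downarrow}(\IE_\w^\bullet)=\mathfrak c$.
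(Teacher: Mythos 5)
Your proof is correct and takes essentially the same route as the paper: both squeeze $\mathfrak c\le{\uparrow\!\uparrow}(\IE_\w^\bullet)\le{\downarrow}(\IE_\w^\bullet)\le\mathfrak c$ by exhibiting continuum many nontrivial cellular finitary entourages with cells of bounded size (yours of size $\le 2$, the paper's of size $\le 3$, built from the offset partitions $\mathcal P_0,\mathcal P_1$ of $\IZ$) whose pairwise unions generate equivalence classes of unbounded cardinality, so that no common upper bound exists in $\IE_\w^\bullet$. The only difference is bookkeeping: the paper composes with a map $\psi$ whose fibers contain arbitrarily long intervals, so that a single coordinate where $x,y\in 2^\w$ differ already produces unbounded cells (letting the family be indexed by all of $2^\w$), whereas you allocate one block per coordinate and hence need reals differing at infinitely many coordinates, which your almost disjoint family correctly supplies.
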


\begin{proof} Let $\varphi:\w\to\w$ be the unique map such that $\varphi^{-1}(n)=[n^2,(n+1)^2)$ for every $n\in\w$. Let $\xi:\w\to\w$ be any map such that for every $y\in\w$ the preimage $\xi^{-1}(y)$ is infinite, and let $\psi=\xi\circ\varphi:\w\to\w$. The crucial property of $\psi$ is that for each $n\in\w$ the preimage $\psi^{-1}(n)$ contains arbitrarily long order intervals.

 Consider the partitions
$$
\begin{aligned}
\mathcal P_0:&=\big\{\{4n\},\{4n+1,4n+2,4n+3\}:n\in\IZ\big\}\mbox{ \ and \ }\\
\mathcal P_1:&=\big\{\{4n-1,4n,4n+1\},\{4n+2\}:n\in\IZ\big\}
\end{aligned}
$$
of $\IZ$.
For every $x\in 2^\w$ consider the partition 
$$\mathcal P_x=\{P\cap \psi^{-1}(n):n\in \w,\;\;P\in P_{x(n)}\}$$of $\w$.
This partition generates an  entourage  
$$E_x=\bigcup_{P\in \mathcal P_x}(P\times P)\in\IE_3^\bullet\subset\IE_\w^\bullet.$$

\begin{claim}\label{cl:under} For any distinct functions $x,y\in 2^\w$ the entourages  $E_x,E_y$ are not contained in a cellular finitary entourage on $\w$.
\end{claim}

\begin{proof}
To derive a contradiction, assume that $E_x\cup E_y\subseteq E$ for some finitary cellular entourage $E\in\IE^\circ_\w$. Then $E\in\IE^\circ_\kappa$ for some finite cardinal $\kappa$. Since $x\ne y$, there exists $n\in\w$ such that $x(n)\ne y(n)$. We lose no generality assuming that $x(n)=0$ and $y(n)=1$. The preimage $\xi^{-1}(n)$ is infinite and hence contains some number $m>\kappa$.  Then $[m^2,(m+1)^2)=\varphi^{-1}(m)\subseteq \psi^{-1}(n)$. We claim that every integer number $z\in [m^2,(m+1)^2)$ is contained in $E(m^2)$. For $z=m^2$ this is clear. Assume that for some $z\in (m^2,(m+1)^2)$ we proved that $z-1\in E(m^2)$. If $z\in 4\IZ\cup(4\IZ+1)$, then $\{z-1,z\}$ is contained in some cell of the partition $\mathcal P_1$ and hence $z\in E_y(z-1)\subseteq E(z-1)\subseteq E[E(m^2)]=E[m^2]$. If $z\in (4\IZ+2)\cup(4\IZ+3)$, then 
$\{z-1,z\}$ is contained in some cell of the partition $\mathcal P_0$ and then $z\in E_x(z-1)\subseteq E(z-1)\subseteq E[E(m^2)]=E[m^2]$. Therefore, $[m^2,(m+1)^2)\subseteq E(m^2)$ and $$\sup_{z\in \w}|E(z)|\le|E(m^2)|\le|[m^2,(m+1)^2)|=2m+1>m>\kappa,$$
which contradicts $E\in\IE_\kappa$.
\end{proof}

By Claim~\ref{cl:under}, the intersection ${\uparrow}E_x\cap{\uparrow}E_y$ in $\IE^\bullet_\w$ is empty and
$\mathfrak c=|\{E_x\}|_{x\in 2^\w}|\le{\uparrow\!\uparrow}(\IE_\w^\bullet)\le{\downarrow}(\IE^\bullet_\w)\le\mathfrak c$ and hence ${\uparrow\!\uparrow}(\IE_\w^\bullet)={\downarrow}(\IE^\bullet_\w)=\mathfrak c$. 
\end{proof}

%\begin{lemma} ${\uparrow}{\downarrow}(\IE_\w^\bullet )\ge\mathfrak b$.
%\end{lemma}

\begin{lemma} ${\downarrow}\!{\uparrow}(\IE_\w^\bullet )\ge \cov(\M)$.
\end{lemma}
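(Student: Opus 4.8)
The plan is to reformulate membership in ${\downarrow}{\uparrow}C$ in terms of generated equivalence relations and then run a Baire–category argument in the Polish space of involutions of $\w$, using that the meager ideal of any perfect Polish space has covering number $\cov(\M)$. First I would observe that for $F,G\in\IE_\w^\bullet$ we have $F\in{\downarrow}{\uparrow}\{G\}$ if and only if $F$ and $G$ admit a common upper bound $T\in\IE_\w^\bullet$, i.e.\ a nontrivial cellular finitary entourage with $F\cup G\subseteq T$. Since any such $T$ is an equivalence relation, it contains the smallest equivalence relation $F\vee G=\bigcup_{n\in\w}(F\cup G)^n$ generated by $F\cup G$; conversely, if $F\vee G$ is finitary then $F\vee G$ itself is such an upper bound (it is cellular, finitary, and nontrivial because it contains the nontrivial $G$). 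Hence ${\downarrow}{\uparrow}C=\IE_\w^\bullet$ if and only if for every $F\in\IE_\w^\bullet$ there is $G\in C$ with $F\vee G$ finitary. So it suffices, given $C\subseteq\IE_\w^\bullet$ with $|C|<\cov(\M)$, to produce one involution $h$ of $\w$ such that $E_h\in\IE_\w^\bullet$ and $E_h\vee G$ is non-finitary for every $G\in C$.

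Next I would fix the search space $X$ to be the set of all involutions of $\w$, a closed and perfect subspace of $\w^\w$ whose nonempty basic open sets are indexed by finite partial involutions, and whose meager ideal therefore has covering number $\cov(\M)$. For a fixed $G\in\IE_\w^\bullet$ set $U_N=\{h\in X:\text{the graph }E_h\cup G\text{ has a connected component of size}\ge N\}$. Each $U_N$ is open, since a component of size $\ge N$ is already witnessed by the restriction of $h$ to a finite set (and enlarging $h$ only enlarges components). Each $U_N$ is dense: given a finite partial involution $p$, I use that $G$ has infinitely many classes of size $\ge 2$ to pick $N$ such classes $C_1,\dots,C_N$ avoiding the finite set $\mathrm{dom}(p)\cup\mathrm{ran}(p)$, choose $a_i\ne b_i$ in each $C_i$, and extend $p$ by the $E_h$-edges $\{b_i,a_{i+1}\}$ for $i<N$; then $a_1,b_1,a_2,b_2,\dots,a_N,b_N$ form a single chain alternating $G$-edges (inside the $C_i$) and $E_h$-edges, so any involution completing this partial data lies in $U_N$. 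Consequently $D_G:=\bigcap_{N\in\w}U_N=\{h\in X:E_h\vee G\text{ is non-finitary}\}$ is comeager.

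Finally I would run the covering argument. If $\bigcap_{G\in C}D_G=\emptyset$, then the meager sets $X\setminus D_G$ ($G\in C$) cover $X$, forcing $|C|\ge\cov(\M)$; since $|C|<\cov(\M)$, there exists $h\in\bigcap_{G\in C}D_G$. Nontriviality of $E_h$ is automatic: as each $G$ is finitary with classes of size $\le k$ for some $k$, a component of $E_h\cup G$ of size $\ge N$ meets at least $N/k$ distinct $G$-classes and so uses at least $N/k-1$ non-fixed points of $h$, whence $h$ has infinitely many non-fixed points and $E_h\in\IE_\w^\bullet$. By the choice of $h$, $E_h\vee G$ is non-finitary for every $G\in C$, so $E_h$ has no common upper bound in $\IE_\w^\bullet$ with any $G\in C$, i.e.\ $E_h\notin{\downarrow}{\uparrow}C$. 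Thus ${\downarrow}{\uparrow}C\ne\IE_\w^\bullet$ whenever $|C|<\cov(\M)$, which is precisely ${\downarrow}\!{\uparrow}(\IE_\w^\bullet)\ge\cov(\M)$.

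I expect the two delicate points to be the clean equivalence ``common upper bound in $\IE_\w^\bullet$ $\Leftrightarrow$ $F\vee G$ finitary'' (in particular that a finitary upper bound contains $F\vee G$ and that $F\vee G$ is nontrivial) and the standard fact that the perfect Polish space of involutions has meager covering number equal to $\cov(\M)$; the density of the sets $U_N$ is the combinatorial core but becomes routine once the alternating-chain construction is arranged.
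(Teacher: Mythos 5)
Your proof is correct and follows essentially the same route as the paper: both work in the Polish space of involutions of $\w$ and apply a Baire-category/$\cov(\M)$ argument to dense open sets asserting that the involution's entourage together with a given $C\in\mathcal C$ produces arbitrarily long alternating chains, which no finitary cellular entourage can dominate. Your only departure is cosmetic packaging: you phrase the chain condition via connected components of $E_h\cup G$ and isolate the clean equivalence ``common upper bound in $\IE_\w^\bullet$ $\Leftrightarrow$ $E_h\vee G$ finitary,'' whereas the paper reaches the same contradiction directly with a hypothetical upper bound $C'$ and its finite bound $n=\sup_{x\in\w}|C'(x)|$.
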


\begin{proof} Fix a subset $\mathcal C\subseteq\IE_\w^\bullet $ of cardinality $|\mathcal C|={\downarrow}\!{\uparrow}(\IE_\w^\bullet )$ such that $\IE_\w^\bullet ={\downarrow}\!{\uparrow}\mathcal C$.

In the Polish space $\w^\w$ consider the closed subspace $I=\bigcap_{x\in\w}\{f\in\w^\w:f(f(x))=x\}$ consisting of involutions. Each involution $f\in I$ induces the cellular finitary entourage $$D_f:=\{(x,y)\in \w\times\w:y\in\{x,f(x)\}\}\in\IE^\circ_2.$$ For every $C\in\mathcal C$ and $n\in\mathbb N$ consider the subspace $U_{C,n}\subseteq I$ consisting of the involutions $f:\w\to\w$ for which there exist pairwise disjoint sets $C_0,\dots,C_n\in\{C(x):x\in\w\}$ and pairwise disjoint sets $D_1,\dots,D_n\in\{D_f(x):x\in\w\}$ such that $C_{i-1}\cap D_i\ne\emptyset\ne D_i\cap C_i$ for all $i\in\{1,\dots,n\}$. It is easy to see that $U_{C,n}$ is an open dense subspace in the Polish space $I$. 

Assuming that $|\mathcal C|<\cov(\M)$, we would find an involution 
$$f\in\bigcap_{C\in\mathcal C}\bigcap_{n\in\w}U_{C,n}.$$
This involution induces the cellular finitary entourage $D_f$. This entourage is not trivial because the family $\{D_f(x):x\in \w\}$ contains infinitely many doubletons forming arbitrarily longs chains when combined with cells $C(x)$, $x\in\w$, for any $C\in\C$. Since $ D_f\in{\downarrow}\!{\uparrow}\mathcal C$, there are entourages $C'\in\IE_\w^\bullet $ and $C\in\mathcal C$ such that $D_f\subseteq C'$ and $C\subseteq C'$.  Since $C'$ is finitary, the cardinal $n=\sup_{x\in\w}|C'(x)|$ is finite. Since $f\in U_{C,n}$, there exist pairwise distinct $C$-balls $C_0,\dots,C_n\in\{C(x):x\in\w\}$ and pairwise distinct $D_f$-balls $D_1,\dots,D_n\in\{D_f(x):x\in\w\}$ such that $C_{i-1}\cap D_i\ne\emptyset\ne D_i\cap C_i$ for all $i\in\{1,\dots,n\}$.
Then also $C'[C_{i-1}]\cap C'[D_i]\ne\emptyset\ne C'[D_i]\cap C'[C_i]$ for all $i\in\{1,\dots,n\}$. Taking into account that $D_f\subseteq C'$ and $C\subseteq C'$, we conclude that $C'[C_0]=C'[D_1]=C'[C_1]=\dots=C'[C_n]\in \{C'(x):x\in\w\}$ and hence the $C'$-ball $C'[C_0]$ contains the union $\bigcup_{i=0}^nC_i$ and has cardinality $>n$, which contradicts the definition of $n$. This contradiction shows that $|\mathcal C|\ge \cov(\M)$.
\end{proof}

\begin{lemma}\label{l:Ddelta-last} ${\downarrow}\!{\uparrow}(\IE_\w^\bullet )\le \Ddelta$.
\end{lemma}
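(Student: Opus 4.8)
The plan is to exploit the large-scale characterization of $\Ddelta$ from Theorem~\ref{t:Ddelta} rather than its raw combinatorial definition. Fix a family $\A\subseteq[\w]^\w$ with $|\A|=\Ddelta$ such that every finitary entourage $E\in\IE_\w$ admits an $\{E\}$-discrete set $A\in\A$. For each $A\in\A$, enumerate $A=\{a_0<a_1<a_2<\cdots\}$ and define the cellular entourage $C_A\in\IE_2^\circ\subseteq\IE_\w^\bullet$ by pairing consecutive elements: $C_A(a_{2n})=C_A(a_{2n+1})=\{a_{2n},a_{2n+1}\}$ and $C_A(x)=\{x\}$ for $x\notin A$. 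Since $A$ is infinite, $C_A$ is nontrivial. I would then show that the family $\mathcal C=\{C_A:A\in\A\}$ satisfies ${\downarrow}\!{\uparrow}\mathcal C=\IE_\w^\bullet$, which immediately yields ${\downarrow}\!{\uparrow}(\IE_\w^\bullet)\le|\mathcal C|\le\Ddelta$.

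To verify ${\downarrow}\!{\uparrow}\mathcal C=\IE_\w^\bullet$, I would fix an arbitrary $F\in\IE_\w^\bullet$ and produce some $C_A\in\mathcal C$ together with a common upper bound $G\in\IE_\w^\bullet$ with $C_A\subseteq G\supseteq F$; this places $F$ in ${\downarrow}\!{\uparrow}\{C_A\}\subseteq{\downarrow}\!{\uparrow}\mathcal C$. Feeding $E=F$ into $\A$ yields a set $A\in\A$ that is $\{F\}$-discrete, meaning the set $A_0=\{a\in A:F(a)\cap A\ne\{a\}\}$ is finite; equivalently, all but finitely many points of $A$ are the unique representative of $A$ in their $F$-class. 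I would take $G$ to be the smallest cellular entourage containing $F\cup C_A$, i.e. the equivalence relation generated by $F$ and $C_A$. By construction $F\subseteq G$, $C_A\subseteq G$, and $G$ is cellular and nontrivial, since it contains the nontrivial entourage $F$.

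The crux is checking that $G$ is finitary, i.e. $\sup_{x}|G(x)|<\infty$. Here I would model the merging via the auxiliary graph $\Gamma$ whose vertices are the $F$-classes and whose edges join $F(a_{2n})$ to $F(a_{2n+1})$ for each pair $\{a_{2n},a_{2n+1}\}$ of $C_A$; the $G$-classes are exactly the unions of $F$-classes lying in a single connected component of $\Gamma$. The number of edges at a vertex $F(x)$ equals $|A\cap F(x)|$. By $\{F\}$-discreteness, every $F$-class that meets $A$ in two or more points must contain a point of $A_0$, so there are only finitely many such classes; all remaining $F$-classes meet $A$ in at most one point and hence have degree $\le 1$ in $\Gamma$. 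Consequently the components built from these ``good'' classes consist of at most two $F$-classes, so the corresponding $G$-classes have size at most $2\sup_x|F(x)|$, while the finitely many components meeting an $F$-class through $A_0$ are finite in number and finite in size. Thus $\sup_x|G(x)|$ is bounded by the maximum of $2\sup_x|F(x)|$ and the finitely many exceptional sizes, so $G$ is finitary and $G\in\IE_\w^\bullet$.

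I expect the finitariness of $G$ to be the only genuine obstacle; everything else is bookkeeping about upper and lower sets in the poset. The key insight that makes it work is that $\{F\}$-discreteness of $A$ forces the generated equivalence relation to merge $F$-classes only in disjoint pairs outside a finite set, which keeps ball sizes uniformly bounded. This is precisely why the reformulation of $\Ddelta$ in Theorem~\ref{t:Ddelta} — as the least number of sets sufficing to be $\{E\}$-discrete against every finitary $E$ — is the natural tool for constructing the witnessing family $\mathcal C$.
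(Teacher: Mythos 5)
Your proof is correct. The witnessing family is the same as the paper's: both proofs start from Theorem~\ref{t:Ddelta} to get $\A\subseteq[\w]^\w$ with $|\A|=\Ddelta$, attach to each $A\in\A$ a nontrivial cellular entourage induced by a fixed-point-free pairing of $A$ (your $C_A$ is just one concrete choice of the paper's involution $\xi_A$), and then show that each $F\in\IE_\w^\bullet$ and the $C_A$ chosen for an $\{F\}$-discrete $A$ admit a common upper bound in $\IE_\w^\bullet$. Where you diverge is in how that upper bound is certified. The paper passes to the smallest cellular coarse structure $\E$ generated by $F$, notes that $A$ remains $\E$-discrete, and invokes Lemma~\ref{l:cellular} (the merging lemma used earlier for maximal cellular structures) to conclude that the coarse structure generated by $\E\cup\{D_A\}$ is cellular, with finitariness coming from the explicit base $\tilde E\approx E\circ D\circ E$ built inside that lemma's proof. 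You instead prove the needed fact directly: the equivalence relation $G$ generated by $F\cup C_A$ has classes equal to the connected components of the graph $\Gamma$ on $F$-classes, and $\{F\}$-discreteness forces all vertices outside the finitely many classes meeting $A_0$ to have degree at most one, so components are pairs of classes except for finitely many finite exceptions, giving $\sup_x|G(x)|<\infty$. Your degree argument is sound (good components have at most two vertices since a connected graph with all degrees $\le 1$ has at most two vertices, and bad components contain one of finitely many bad classes, each of finite degree, with only leaves attached), and it is essentially an inlined, self-contained special case of Lemma~\ref{l:cellular} for a structure generated by a single cellular entourage. What each route buys: the paper's version reuses established machinery and yields cellularity of a whole coarse structure abstractly, while yours is elementary, avoids coarse-structure bookkeeping entirely, and makes the quantitative bound $2\sup_x|F(x)|$ on the new ball sizes explicit.
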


\begin{proof} By Theorem~\ref{t:Ddelta}, there exists a family $\A\subseteq[\w]^\w$ of cardinality $|\A|=\Ddelta$ such that for every finitary entourage $E\in\IE_\w$ there exists an $\{E\}$-discrete set $A\in\A$. For every $A$ fix an involution $\xi_A:X\to X$ such that $A=\{x\in X:x\ne\xi_A(x)\}$ and consider the cellular entourage $D_A=\{(x,y)\in\w\times\w:y\in\{x,\xi_A(x)\}\}\in\IE^\bullet_2$.  We claim that 
$\IE_\w^\bullet= {\downarrow}\!{\uparrow}\mathcal D$ where $\mathcal D=\{D_A:A\in\A\}$.

Indeed, for any $E\in\IE^\bullet_\w$, we can find an $\{E\}$-discrete set $A\in\A$. Consider the smallest cellular coarse structure $\E$ containing the cellular entourage $E$. It is easy to see that in this structure the $\{E\}$-discrete set $A$ remains $\E$-discrete. By Lemma~\ref{l:cellular}, the smallest coarse structure containing $\E\cup\{D_A\}$ is cellular. Then $E\cup D_A$ is contained in some cellular finitary entourage $C$, witnessing that $E\in{\downarrow}C\subseteq{\downarrow}\!{\uparrow}D_A$ and ${\downarrow}\!{\uparrow}(\IE_\w^\bullet)\le|\mathcal D|\le\Ddelta$.
\end{proof}

\begin{lemma} $\Asigma\le {\uparrow}\!{\downarrow}(\IE_\w^\bullet )$.
\end{lemma}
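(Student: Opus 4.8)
The plan is to show that any subset $C\subseteq\IE_\w^\bullet$ witnessing the equality ${\uparrow\downarrow}C=\IE_\w^\bullet$ already witnesses the defining property of $\Asigma(\IE_\w^\circ)$, which equals $\Asigma$ by Theorem~\ref{t:main}(5). Since $C\subseteq\IE_\w^\bullet\subseteq\IE_\w^\circ$, this gives $\Asigma=\Asigma(\IE_\w^\circ)\le|C|$, and taking $C$ of minimal cardinality yields $\Asigma\le{\uparrow}\!{\downarrow}(\IE_\w^\bullet)$.

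First I would unpack the order-theoretic hypothesis. A nontrivial cellular finitary entourage $p$ belongs to ${\uparrow\downarrow}C$ if and only if $p$ and some $c\in C$ admit a common lower bound $q\in\IE_\w^\bullet$, i.e. a nontrivial cellular finitary $q\subseteq p\cap c$. Because the intersection of two cellular (finitary) entourages is again cellular (finitary), such a $q$ exists exactly when $p\cap c$ itself is nontrivial. Hence ${\uparrow\downarrow}C=\IE_\w^\bullet$ is equivalent to the assertion that for every $p\in\IE_\w^\bullet$ there is $c\in C$ for which $p\cap c$ is nontrivial, where $(p\cap c)(x)=p(x)\cap c(x)$.

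Next, to verify the $\Asigma(\IE_\w^\circ)$-property, I fix infinite sets $A,B\subseteq\w$. Replacing them by disjoint infinite subsets $A'\subseteq A$ and $B'\subseteq B$ (and noting $c[A']\cap c[B']\subseteq c[A]\cap c[B]$), I may assume $A\cap B=\emptyset$. Enumerate $A=\{a_n\}_{n\in\w}$ and $B=\{b_n\}_{n\in\w}$, and let $h$ be the involution swapping $a_n\leftrightarrow b_n$ and fixing every other point. The induced entourage $p=\{(x,y)\in\w\times\w:y\in\{x,h(x)\}\}$ lies in $\IE_2^\bullet\subseteq\IE_\w^\bullet$, being nontrivial since $\{x:|p(x)|>1\}=A\cup B$ is infinite. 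By the reformulated hypothesis there is $c\in C$ with $p\cap c$ nontrivial, that is, infinitely many $x\in A\cup B$ satisfy $h(x)\ne x$ and $h(x)\in c(x)$. For each such $x$, the points $x$ and $h(x)$ share a $c$-class with one of them in $A$ and the other in $B$, so $x\in c[A]\cap c[B]$. These $x$ are pairwise distinct, so $c[A]\cap c[B]\in[\w]^\w$, which is exactly the required property for the pair $A,B$.

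Consequently $C$ witnesses $\Asigma(\IE_\w^\circ)\le|C|$, and Theorem~\ref{t:main}(5) yields $\Asigma\le{\uparrow}\!{\downarrow}(\IE_\w^\bullet)$. The main point to get right is the passage from the purely combinatorial-lattice condition ${\uparrow\downarrow}C=\IE_\w^\bullet$ to the statement that $p\cap c$ is nontrivial for some $c\in C$; this hinges on the fact that $\IE_\w^\bullet$-type entourages are closed under intersection, so that a common lower bound in the poset exists precisely when the intersection is nontrivial. Everything else is the routine translation between almost free involutions and $\IE_2^\bullet$-entourages already used in Theorem~\ref{t:alternative}.
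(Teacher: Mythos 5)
Your proof is correct and takes essentially the same route as the paper's: both arguments hinge on the involution swapping disjoint infinite refinements of $A$ and $B$, the induced entourage in $\IE_2^\bullet$, and the observation that a nontrivial element of $\IE_\w^\bullet$ lying below both this entourage and some $c\in C$ forces $c[A]\cap c[B]$ to be infinite. The paper phrases this contrapositively (assuming $|\C|<\Asigma$, producing $\C$-separated sets, and deriving that the common lower bound $C'$ is trivial), whereas you argue directly via the intersection $p\cap c$ being the canonical lower bound, but the combinatorial content is identical.
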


\begin{proof} By the definition of the cardinal ${\uparrow}\!{\downarrow}(\IE_\w^\bullet )$, there exists a subfamily $\C\subseteq\IE_\w^\bullet $ of cardinality $|\mathcal C|={\uparrow}\!{\downarrow}(\IE_\w^\bullet )$ such that $\IE_\w^\bullet ={\uparrow}\!{\downarrow}\mathcal C$. Assuming that $|\mathcal C|={\uparrow}\!{\downarrow}(\mathcal C)<\Asigma$, we can find two disjoint $\C$-separated sets $A,B\in[\w]^\w$. Fix any involution $f:\w\to\w$ such that $f(A)=B$ and $f(x)=x$ for any $x\in \w\setminus(A\cup B)$. The involution $f$ induces the cellular finitary entourage $D=\bigcup_{x\in \w}\{x,f(x)\}^2$. Since $D\in\IE_\w^\bullet={\uparrow}\!{\downarrow}\mathcal C$, there are entourages $C\in\mathcal C$ and $C'\in\IE_\w^\bullet $ such that $C'\subseteq C$ and $C'\subseteq D$. Observe that the set $$
\begin{aligned}
\{x\in\w:|C'(x)|>1\}\subseteq\;&\{x\in \w:|D(x)\cap C(x)|>1\}=\{x\in A\cup B:\{x,f(x)\}\subseteq C(x)\}\subseteq\\
& (A\cap C[B])\cup (B\cap C[A])\subseteq C[A]\cap C[B]
\end{aligned}
$$ is finite, which implies that the entourage $C'$ is trivial. This contradiction shows that $\Asigma\le{\uparrow}\!{\downarrow}(\IE_\w^\bullet )$.
\end{proof}

\begin{lemma} ${\uparrow}\!{\downarrow}(\IE_\w^\bullet )\le\non(\M)$.
\end{lemma}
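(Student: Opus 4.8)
The plan is to exhibit a witnessing family $\mathcal C\subseteq\IE_\w^\bullet$ with $|\mathcal C|\le\non(\M)$ and ${\uparrow}\!{\downarrow}\mathcal C=\IE_\w^\bullet$, dualizing the construction used in the proof of ${\downarrow}\!{\uparrow}(\IE_\w^\bullet)\ge\cov(\M)$. First I would record a simplification of the membership relation ${\uparrow}\!{\downarrow}$. For two cellular finitary entourages $C,F$ the intersection $C\cap F$ is again an equivalence relation and is finitary, so $C\cap F\in\IE_\w^\circ$; moreover it is the largest entourage contained in both $C$ and $F$. Consequently, for $C,F\in\IE_\w^\bullet$, the condition $F\in{\uparrow}\!{\downarrow}C$ (that is, the existence of $C'\in\IE_\w^\bullet$ with $C'\subseteq C$ and $C'\subseteq F$) holds \emph{if and only if} $C\cap F$ is nontrivial, i.e. the set $\{x\in\w:|C(x)\cap F(x)|>1\}$ is infinite. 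Thus it suffices to construct $\mathcal C$ so that for every $F\in\IE_\w^\bullet$ there is $C\in\mathcal C$ with $C\cap F$ nontrivial.

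Next I would work in the Polish space $I=\bigcap_{x\in\w}\{f\in\w^\w:f(f(x))=x\}$ of involutions of $\w$, assigning to each $f\in I$ the cellular entourage $D_f=\{(x,y)\in\w\times\w:y\in\{x,f(x)\}\}\in\IE_2^\circ$. Fix a non-meager set $M\subseteq I$ with $|M|=\non(\M)$. Since the set $G=\{f\in I:f\text{ has infinitely many }2\text{-cycles}\}$ is comeager in $I$, the set $M'=M\cap G$ is still non-meager and satisfies $|M'|\le\non(\M)$. For every $f\in M'$ the involution $f$ moves infinitely many points, so $D_f$ is nontrivial and hence $D_f\in\IE_\w^\bullet$. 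I take $\mathcal C=\{D_f:f\in M'\}$, which has cardinality $\le\non(\M)$.

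The key computation is that, for a fixed $F\in\IE_\w^\bullet$, the set of involutions $f$ for which $D_f\cap F$ is nontrivial is comeager in $I$. Indeed, $(D_f\cap F)(x)=\{x,f(x)\}\cap F(x)$ has more than one element exactly when $x\ne f(x)\in F(x)$, so $D_f\cap F$ is nontrivial precisely when $\{x\in\w:x\ne f(x)\in F(x)\}$ is infinite, and this set of involutions equals $\bigcap_{n\in\w}U_{F,n}$ with $U_{F,n}=\{f\in I:\exists x\ge n\ \ x\ne f(x)\in F(x)\}$. Each $U_{F,n}$ is open, and it is dense: any basic clopen condition fixing $f$ on a finite set can be completed inside $U_{F,n}$ by swapping two elements of a nontrivial $F$-class lying above $n$ and outside the already-constrained coordinates. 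As $I$ is Polish, $\bigcap_{n\in\w}U_{F,n}$ is comeager, so it meets the non-meager set $M'$; any $f$ in the intersection yields $C=D_f\in\mathcal C$ with $C\cap F$ nontrivial. By the first paragraph this gives ${\uparrow}\!{\downarrow}\mathcal C=\IE_\w^\bullet$, whence ${\uparrow}\!{\downarrow}(\IE_\w^\bullet)\le|\mathcal C|\le\non(\M)$.

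The only mildly delicate step, which I would write out carefully, is the density of $U_{F,n}$: one must check that a finite ``closed'' partial involution always extends into $U_{F,n}$, using that $F$, being nontrivial, has infinitely many classes of size $\ge 2$, so a full nontrivial class can be chosen disjoint from the finitely many constrained coordinates and from $\{0,\dots,n-1\}$. The remaining verifications (that $C\cap F$ is cellular and finitary, that $G$ is comeager so $M\cap G$ stays non-meager, and that $D_f\in\IE_2^\circ\subset\IE_\w^\bullet$) are routine and parallel to the $\cov(\M)$ lemma.
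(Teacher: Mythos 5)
Your proof is correct and follows essentially the same Baire-category route as the paper: a non-meager set of cardinality $\non(\M)$ in a Polish space of bijections of $\w$, whose induced family of cellular entourages with two-element classes meets, for every target $F\in\IE_\w^\bullet$, a dense $G_\delta$ of bijections producing a common nontrivial lower bound. The differences are only cosmetic: you parametrize by involutions $f\in I$ (exactly the device of the paper's $\cov(\M)$ lemma) instead of permutations acting on a fixed pairing $\varphi$, you make the lower bound canonical via the valid observation that $F\in{\uparrow}\!{\downarrow}C$ iff the (automatically cellular and finitary) meet $C\cap F$ is nontrivial, and you correctly restrict $M$ to the comeager set of involutions with infinitely many $2$-cycles so that each $D_f$ is nontrivial.
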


\begin{proof} Consider the permutation group $S_\w$ of $\w$ endowed with the  topology, inherited from the Tychonoff product $\w^\w$ of countably many copies of the discrete space $\w$. It is well-known that $S_\w$ is a Polish space, homeomorphic to $\w^\w$. Then the definition of the cardinal $\non(\M)$ yields a non-meager set $M\subseteq S_\w$ of cardinality $|M|=\non(\M)$. Fix any function $\varphi:\w\to\w$ such that $|\varphi^{-1}(y)|=2$ for every $y\in\w$. For every permutation $f\in M$ consider the nontrivial cellular entourage $C_f=\bigcup_{y\in\w}f[\varphi^{-1}(y)]^2\in\IE_\w^\bullet $. Let $\mathcal C=\{C_f:f\in M\}$.

We claim that ${\uparrow}\!{\downarrow}\mathcal C=\IE_\w^\bullet $. Given any nontrivial entourage $E\in\IE_\w^\bullet $, for every $n\in\w$, consider the set $U_n=\{f\in S_\w:\exists y\ge n\;\exists x\in\w\;\; f[\varphi^{-1}(y)]\subseteq E(x)\}$ and observe that it is open and dense in $U_n$. By the Baire Theorem, the intersection $\bigcap_{n\in\w}U_n$ is dense $G_\delta$ in $S_\w$ and hence it meets the nonmeager set $M$. Then we can find a permutation $f\in M\cap\bigcap_{n\in\w}U_n$ and an infinite set $Y\subset\w$ such that for every $y\in Y$ the set $f[\varphi^{-1}(y)]$ is contained in some ball $E(x_y)$.

Now consider the nontrivial entourage $C=\Delta_\w\cup \bigcup_{y\in Y}f[\varphi^{-1}(y)]^2$ on $\w$ and observe that $C\subseteq C_f$ and $C\subseteq E$, which implies $E\in {\uparrow}\!{\downarrow}C_f\subseteq {\uparrow}\!{\downarrow}\mathcal C$.
\end{proof}

Theorems~\ref{t:poset}, \ref{t:DS} and \ref{t:duals} show that the cardinal characteristics of the poset $\IE_\w^\bullet $ fit into the following diagram.
$$
\xymatrix@C=20pt{
{\downarrow}\!{\uparrow}\!{\downarrow}(\IE^\bullet_\w)\ar[r]&\cov(\mathcal N)\ar[r]&\Adelta\ar[r]&\Asigma\ar[r]&{\uparrow}\!{\downarrow}(\IE^\bullet_\w)\ar[r]&\non(\mathcal M)\ar[r]&{\downarrow\!\downarrow}(\IE_\w^\bullet)\ar@{=}[r]&{\uparrow}(\IE^\bullet_\w)\ar@{=}[d]\\
1\ar@{=}[u]\ar@{=}[d]&&&\Dsigma\ar[rd]&&&&\mathfrak c\\
{\uparrow}\!{\downarrow}\!{\uparrow}(\IE^\bullet_\w)\ar[r]&\cov(\mathcal M)\ar[r]\ar[rru]&{\downarrow}\!{\uparrow}(\IE^\bullet_\w)\ar[rr]&&\Ddelta\ar[r]&\non(\mathcal N)\ar[r]&{\uparrow\!\uparrow}(\IE_\w^\bullet)\ar@{=}[r]&{\downarrow}(\E^\bullet_\w)\ar@{=}[u]\\
}
$$
\bigskip
%$$
%\xymatrix{
%&\mathfrak c\\
%&{\uparrow}(\IE_\w^\bullet )\ar@{=}[u]\\
%\non(\mathcal L)\ar@{-}[ur]&{\downarrow}(\IE_\w^\bullet )\ar@{-}[u]&\non(\M)\ar@{-}[lu]\ar@{-}[d]\\
%{\downarrow}\!{\uparrow}(\IE_\w^\bullet )\ar@{-}[ru]&\mathfrak d\ar@{-}[u]& {\uparrow}\!{\downarrow}(\IE_\w^\bullet )\ar@{-}[lu]&\\
%&&\Asigma\ar@{-}[u]\\
%\cov(\M)\ar@{-}[ruu]\ar@{-}[uu]\ar@{-}@/^25pt/[uuu]&&\cov(\mathcal N)\ar@{-}[u]\\
%{\downarrow}\!{\uparrow}\!{\downarrow}(\IE_\w^\bullet )\ar@{=}[r]\ar@{-}[u]&1&{\uparrow}\!{\downarrow}\!{\uparrow}(\IE_\w^\bullet )\ar@{=}[l]\ar@{-}[u].
%}
%$$

The diagram suggests the following open problems.

\begin{problem}\begin{enumerate} 
\item Is ${\downarrow}\!{\uparrow}(\IE_\w^\bullet )\le\Dsigma$?
\item Are the strict inequalities $\Asigma<{\uparrow}\!{\downarrow}(\IE_\w^\bullet )<\non(\M)$ consistent?
\item Are the strict inequalities $\cov(\M)<{\downarrow}\!{\uparrow}(\IE_\w^\bullet )<\Ddelta$ consistent?
\end{enumerate}
\end{problem}

\section{Acknowledgement}

The author express his sincere thanks to Igor Protasov for valuable discussions and suggesting his construction of ultrafilter perturbations for producing $2^{\mathfrak c}$ indiscrete (or inseparated) cellulary finitary coarse structures on $\w$, and also to Will Brian who suggested the idea  of the proof of Lemma~\ref{l:Brian} in his answer\footnote{\tt https://mathoverflow.net/a/353533/61536} to the question\footnote{\tt https://mathoverflow.net/q/352984/61536}, posed by the author on the {\tt MathOverflow}. 

Special thanks are due to an anonymous referee who suggested to consider the dual cardinal characteristics $\Ddelta,\Dsigma$ and solved some problems from the initial version of the paper. In particular, the referee suggested to prove Theorem~\ref{t:duals} and Lemma~\ref{l:Ddelta-last}.

Some open problems appearing in this paper are included to the survey \cite{STAS} of open set-theoretic problems in  Asymptology.
%\newpage

\end{document}